\newcommand{\E}{{\mathbb E}}
\newcommand{\F}{{\mathbb F}}
\renewcommand{\P}{{\mathbb P}}
\newcommand{\R}{{\mathbb R}}
\renewcommand{\S}{{\mathbb S}}
\newcommand{\N}{{\mathbb N}}
\newcommand{\Ccal}{{\mathcal C}}
\newcommand{\Fcal}{{\mathcal F}}
\newcommand{\Pcal}{{\mathcal P}}
\newcommand{\Xcal}{{\mathcal X}}
\newcommand{\fdot}{{\,\cdot\,}}
\DeclareMathOperator{\tr}{tr}
\newtheorem{theorem}{Theorem}
\newtheorem{corollary}[theorem]{Corollary}
\newtheorem{definition}[theorem]{Definition}
\newtheorem{lemma}[theorem]{Lemma}
\newtheorem{remark}[theorem]{Remark}
\theoremstyle{definition}
\newtheorem{example}[theorem]{Example}
\numberwithin{equation}{section}
\numberwithin{theorem}{section}
\definecolor{darkgreen}{rgb}{0,0.7,0}
\DeclareMathOperator{\diag}{diag}
\newcommand{\iii}{{\vert\kern-0.25ex\vert\kern-0.25ex\vert}}
\begin{document}

\title{A weak solution theory for stochastic Volterra equations of convolution type\footnote{The work of Eduardo Abi Jaber was supported by grants from R\'egion Ile-de-France. Christa Cuchiero gratefully acknowledges financial support by the Vienna Science and Technology Fund (WWTF) under grant MA16-021 and the Austrian Science Fund (FWF) under grant  Y1235 of the START-program. The research of Sergio Pulido benefited from the support of the Chair Markets in Transition (F\'ed\'eration Bancaire Fran\c caise) and the project ANR 11-LABX-0019. Sergio Pulido acknowledges support by the Europlace Institute of Finance (EIF) and the Labex Louis Bachelier, research project: ``The impact of information on financial markets".}}
\author{Eduardo Abi Jaber\thanks{Universit\'e Paris 1 Panth\'eon-Sorbonne, eduardo.abi-jaber@univ-paris1.fr} \and Christa Cuchiero\thanks{University of Vienna, christa.cuchiero@univie.ac.at} \and Martin Larsson\thanks{Carnegie Mellon University, martinl@andrew.cmu.edu} \and Sergio Pulido\thanks{ENSIIE \& Universit\'e Paris-Saclay, sergio.pulidonino@ensiie.fr}}

\maketitle

\begin{abstract}
We obtain general weak existence and stability results for stochastic convolution equations with jumps under mild regularity assumptions, allowing for non-Lipschitz coefficients and singular kernels. Our approach relies on weak convergence in $L^p$ spaces. The main tools are new a priori estimates on Sobolev--Slobodeckij norms of the solution, as well as a novel martingale problem that is equivalent to the original equation. This leads to generic approximation and stability theorems in the spirit of classical martingale problem theory. We also prove uniqueness and path regularity of solutions under additional hypotheses. To illustrate the applicability of our results, we consider scaling limits of nonlinear Hawkes processes and approximations of stochastic Volterra processes by Markovian semimartingales.
\end{abstract}

\tableofcontents

\section{Introduction and main results}\label{S_main}

A stochastic Volterra equation of convolution type is a stochastic equation of the form
\begin{equation}\label{eq_SVEC}
X_t = g_0(t) + \int_{[0,t)} K(t-s)dZ_s,
\end{equation}
where $X$ is the $d$-dimensional process to be solved for, $g_0$ is a given function, $K$ is a given $d\times k$ matrix-valued convolution kernel, and $Z$ is a $k$-dimensional It\^o semimartingale whose differential characteristics are given functions of $X$. The solution concept is described in detail below. In particular, conditions are needed to ensure that the stochastic integral on the right-hand side of \eqref{eq_SVEC} is well-defined.

This type of equation appears in multiple applications, for example turbulence \citep{barndorff2008time}, energy markets \citep{barndorff2013modelling}, and rough volatility modeling in finance \citep{EER:06, volatilityrough2014}. In the latter context the kernel is singular, $K(t)=t^{\gamma-1}$ with $\gamma \in (\frac{1}{2},1)$, and the driving semimartingale is continuous with coefficients that are just continuous functions without any Lipschitz-type regularity. Such examples fall outside the scope of classical theory, such as the results of \cite{BM:80:1, P:85, cou_dec_01, W:08, Z:10}. This motivated the work of \citet{AJLP17}, although their results only apply in the path-continuous case. Equations like \eqref{eq_SVEC} also occur in the study of fractional Brownian motion.

There are however many important examples with jumps. The most basic ones  are L\'evy driven moving averages where the characteristics of the driving semimartingale are constant and thus do not depend on $X$ \citep{BP:09,M:06}. A more complex example is the intensity $\lambda$ of a Hawkes process $N$. Here the driving semimartingale is the Hawkes process itself, which is a counting process, and the intensity satisfies
	\[
	\lambda_t= g_0(t) + \int_{[0,t)} K(t-s)dN_s.
	\]
Various multivariate and nonlinear generalizations have also been studied and applied; see \citet{bremaud1996stability,daley2003introduction,delattre2016hawkes} and the references there.

Solutions of \eqref{eq_SVEC} are neither semimartingales nor Markov processes in general. Classically, they are constructed using Picard iteration, but only under Lipschitz or near-Lipschitz assumptions. Alternatively, one can use scaling limits of Hawkes-type processes to generate continuous solutions for well-chosen kernels and affine characteristics \citep{JR:16, GKR:19}. Yet another approach is to use projections of Markovian solutions to certain degenerate stochastic partial differential equations \citep{AJEE:19b,BDK:19,CT:18,CT:19, MS:15}. In the case of affine characteristics a unified theory is presented by \citet{CT:18}, by lifting Volterra processes to so-called generalized Feller processes in infinite dimension.  Their construction builds  on approximating Brownian or complicated jump drivers by finite activity jump processes.

In this paper we also use approximation by jumps, but not via scaling limits of Hawkes processes, nor infinite dimensional lifts. Instead we work with a priori $L^p$ estimates for solutions of \eqref{eq_SVEC}, combined with a novel ``Volterra'' martingale problem in $\R^d$ that allows us to pass to weak limits in \eqref{eq_SVEC}. In view of the irregular path behavior that occurs, in particular, in the presence of jumps, this identifies $L^p$ spaces as a natural environment for the weak convergence analysis.
With this approach we obtain 
\begin{itemize}
\itemsep0em
\item existence of weak solutions for singular kernels, non-Lipschitz coefficients and general jump behavior;
\item strong existence and pathwise uniqueness under suitable Lipschitz conditions (but still singular kernels and jumps);
\item convergence and stability theorems in the spirit of classical martingale problem theory, allowing for instance to study scaling limits of nonlinear Hawkes processes and to approximate stochastic Volterra processes by Markovian semimartingales;
\item path regularity under certain additional conditions on the kernel and the characteristics.
\end{itemize}

Let us now describe the solution concept for \eqref{eq_SVEC}. For $p\in[2,\infty)$  we denote by $L^p_{\rm loc}=L^p_{\rm loc}(\R_+,\R^n)$ the space of locally $p$-integrable functions from $\R_+$ to $\R^n$, where the dimension $n$ of the image space will depend on the context. Let $d,k\in\N$ and consider the following data:
\begin{enumerate}[label={\rm(D\arabic*)}]
\itemsep0em
\item\label{SVE_IC} an {\em initial condition} $g_0\colon\R_+\to\R^d$ in $L^p_{\rm loc}$,
\item\label{SVE_K} a {\em convolution kernel} $K\colon\R_+\to\R^{d\times k}$ in $L^p_{\rm loc}$,
\item\label{SVE_abnu} a {\em characteristic triplet} $(b,a,\nu)$ of measurable maps $b\colon\R^d\to\R^k$ and $a\colon\R^d\to\S^k_+$ as well as a kernel $\nu(x,d\zeta)$ from $\R^d$ into $\R^k$ such that $\nu(x,\{0\})=0$ for all $x\in\R^d$ and, for some $c\in\R_+$,
\end{enumerate}
\begin{equation}\label{eq_GB_1}
|b(x)| + |a(x)| + \int_{\R^k} {\left(1\wedge|\zeta|^2\right)} \nu(x,d\zeta) \le c (1+|x|^p), \quad x\in\R^d.
\end{equation}
Given this data, we can now state the following key definition.

\begin{definition}\label{D:solSVE}
A {\em weak $L^p$ solution of \eqref{eq_SVEC}} for the data $(g_0,K,b,a,\nu)$ is an $\R^d$-valued predictable process $X$, defined on some filtered probability space $(\Omega,\Fcal,\F,\P)$, that has trajectories in $L^p_{\rm loc}$ and satisfies
\begin{equation}\label{eq_SVEC_def}
X_t = g_0(t) + \int_{[0,t)} K(t-s)dZ_s \quad \text{$\P\otimes dt$-a.e.}
\end{equation}
for some $\R^k$-valued It\^o semimartingale $Z$ with $Z_0=0$ whose differential characteristics (with respect to some given truncation function) are $b(X)$, $a(X)$, $\nu(X,d\zeta)$. For convenience we often refer to the pair $(X,Z)$ as a weak $L^p$ solution.
\end{definition}

{Due to condition \eqref{eq_GB_1},} the stochastic integral in \eqref{eq_SVEC_def} is well-defined for almost every $t\in\R_+$, confirming that the definition of $L^p$ solution makes sense. This is shown in Lemma~\ref{L_SI_welldef}. 

Throughout this section we assume $\int_{\mathbb{R}^k} |\zeta|^2 \nu(x,d\zeta) < \infty$ for all $x \in \mathbb{R}^d$ so we can use the ``truncation function'' $\chi(\zeta)=\zeta$. The characteristics of $Z$ are therefore understood with respect to this function. We can now state our main result on existence of weak $L^p$ solutions.

\begin{theorem}\label{T_weak_existence}
Let $d,k\in\N$, $p\in[2,\infty)$, and consider data $(g_0,K,b,a,\nu)$ as in \ref{SVE_IC}--\ref{SVE_abnu}. Assume $b$ and $a$ are continuous, and $x\mapsto|\zeta|^2 \nu(x,d\zeta)$ is continuous from $\R^d$ into the finite positive measures on $\R^k$ with the topology of weak convergence. In addition, assume there exist a constant $\eta\in(0,1)$, a locally bounded function $c_K\colon\R_+\to\R_+$, and a constant $c_{\rm LG}$ such that
\begin{equation}\label{eq_K_Wpnu}
\int_0^T \frac{|K(t)|^p}{t^{\eta p}}dt + \int_0^T \int_0^T \frac{|K(t)-K(s)|^p}{|t-s|^{1+\eta p}}ds\,dt \le c_K(T), \quad T\ge0,
\end{equation}
and
\begin{equation}\label{eq_LG}
\begin{aligned}
|b(x)|^2 &+ |a(x)| + \int_{\R^k} |\zeta|^2 \nu(x,d\zeta) \\
&\quad + \left(\int_{\R^k} |\zeta|^p \nu(x,d\zeta)\right)^{2/p} \le c_{\rm LG} (1+|x|^2), \quad x\in\R^d.
\end{aligned}
\end{equation}
Then there is a weak $L^p$ solution $(X,Z)$ of \eqref{eq_SVEC} for the data $(g_0,K,b,a,\nu)$.
\end{theorem}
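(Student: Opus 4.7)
The plan is to construct $(X,Z)$ as a weak limit of approximating solutions, using the Sobolev--Slobodeckij estimate \eqref{eq_K_Wpnu} to obtain compactness in $L^p_{\rm loc}$, and then to identify the limit via the equivalent ``Volterra'' martingale problem alluded to in the introduction.

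\textbf{Step 1 (Approximation).} I would regularize the data: replace $K$ by mollified kernels $K^n$ that are bounded and smooth (for instance by truncating large values and convolving with a smooth mollifier), and replace $(b,a,\nu)$ by uniformly Lipschitz approximations $(b^n,a^n,\nu^n)$ that preserve the growth bounds \eqref{eq_GB_1} and \eqref{eq_LG} with constants independent of $n$, and which converge to $(b,a,\nu)$ in the sense dictated by the continuity hypotheses. Classical Lipschitz Volterra theory (Picard iteration with a smooth kernel) then yields strong $L^p$ solutions $(X^n,Z^n)$ on a common stochastic basis rich enough to carry the required driving noises.

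\textbf{Step 2 (Uniform estimates).} Applying a BDG-type inequality to $\int_{[0,t)} K^n(t-s)dZ^n_s$ and using the $p$-th moment bound on $\nu^n$ in \eqref{eq_LG} to control both continuous and jump parts of the characteristic, one gets
\[
\E\bigl[|X^n_t|^p\bigr] \le C\Bigl(1+|g_0(t)|^p + \int_0^t \frac{|K^n(t-s)|^p}{(t-s)^{\eta p}}\bigl(1+\E[|X^n_s|^p]\bigr)\,ds\Bigr).
\]
A Gr\"onwall argument, valid because $t\mapsto |K(t)|^p t^{-\eta p}$ is locally integrable by \eqref{eq_K_Wpnu}, yields $\sup_n \E\!\int_0^T\!|X^n_t|^p\,dt<\infty$. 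The double-integral part of \eqref{eq_K_Wpnu}, handled analogously, produces a uniform bound on the expected Sobolev--Slobodeckij seminorm $\E[\|X^n\|^p_{W^{\eta',p}([0,T])}]$ for any $\eta'<\eta$. Since $W^{\eta',p}\hookrightarrow L^p$ compactly, Prokhorov gives tightness of the laws of $X^n$ in $L^p([0,T];\R^d)$.

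\textbf{Step 3 (Identification of the limit).} After extracting a weakly convergent subsequence and invoking Skorokhod representation in the Polish space $L^p_{\rm loc}$, we may assume $X^n\to X$ almost surely in $L^p_{\rm loc}$. Direct passage to the limit inside $\int K(t-s)dZ^n_s$ is not feasible because the solutions are not semimartingales and the kernel is singular. Instead, I would rely on the Volterra martingale problem, whose defining identities can be written as expectations of continuous $L^p$-functionals of $X^n$ built from $b^n(X^n)$, $a^n(X^n)$, and test integrals against $|\zeta|^2\nu^n(X^n,d\zeta)$. The continuity of $b$ and $a$, the weak continuity of $|\zeta|^2\nu(x,d\zeta)$ in $x$, and the uniform $L^p$-moment bound in \eqref{eq_LG} (which ensures uniform integrability when testing against functions of at most quadratic growth in $\zeta$) allow these identities to pass to the limit. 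Hence $X$ solves the Volterra martingale problem for $(g_0,K,b,a,\nu)$, and the equivalence theorem established elsewhere in the paper reconstructs a driving semimartingale $Z$ with the prescribed characteristics, yielding the desired weak $L^p$ solution.

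The main obstacle will be Step 3. Classical weak-convergence proofs for SDEs pass to the limit on the level of the driving semimartingale, but here $Z^n$ depends nontrivially on the regularization and its limit is not accessible a priori. The Volterra martingale problem is precisely the device that moves every convergence question into $L^p_{\rm loc}$ for the $\R^d$-valued process $X^n$, where the Sobolev--Slobodeckij a priori bounds from \eqref{eq_K_Wpnu} give compactness and the continuity assumptions on $(b,a,\nu)$ close the argument.
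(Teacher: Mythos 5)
Your high-level strategy---approximate, use the Sobolev--Slobodeckij bound for $L^p_{\rm loc}$ tightness, identify the limit through the Volterra martingale problem---matches the paper's, but two steps as proposed have genuine gaps.

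First, Step~1 invokes ``classical Lipschitz Volterra theory (Picard iteration with a smooth kernel)'' after Lipschitz-regularizing $(b,a,\nu)$. This is not a step you can take off the shelf: you would need (i) a meaningful Lipschitz approximation of a general jump kernel $\nu(x,d\zeta)$, which requires first representing $\nu$ through coefficients $\gamma(x,\xi)$ against a fixed Poisson random measure and then smoothing $\gamma$ in a way that preserves the $L^2$- and $L^p$-moment bounds in \eqref{eq_LG} \emph{uniformly in $n$} and induces the locally uniform convergence $A^nf\to Af$ demanded by the stability theorem; and (ii) a Picard-iteration existence theorem for jump-driven Volterra equations with $L^p$ kernels, which is not standard. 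The paper sidesteps both problems entirely: Lemma~\ref{L_approxA} replaces the whole triplet $(b,a,\nu)$ by a bounded, compactly supported, \emph{pure-jump} kernel $\nu^n$ (the drift becomes one tiny deterministic jump, the diffusion becomes a pair of symmetric $\pm\varepsilon\sigma_i$ jumps, the large and small jumps are truncated and rebalanced), and Lemma~\ref{L_basic_existence} then constructs the approximating solution $(X^n,Z^n)$ \emph{explicitly}, jump by jump, with no Lipschitz or Picard machinery at all. This is the essential new idea that makes the existence proof elementary, and it is absent from your proposal.

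Second, Step~3 treats the martingale problem as if it were a statement about $X$ alone and says the equivalence theorem ``reconstructs'' $Z$. But Definition~\ref{D_MP} and Theorem~\ref{T_stability} are \emph{joint} statements about $(X,Z)$: the test process is $M^f_t=f(Z_t)-\int_0^t Af(X_s,Z_s)\,ds$, and the stability theorem needs $(X^n,Z^n)\Rightarrow(X,Z)$ in $L^p_{\rm loc}\times D$. You give no argument for tightness of $Z^n$ in the Skorokhod space. The paper handles this with a separate semimartingale-tightness argument: the predictable characteristics of $Z^n$ are strongly majorized by $c_{\rm LG}'\int_0^{\cdot}(1+|X^n_s|^2)\,ds$, which is itself tight with continuous limit points, and then \citet[Theorem~VI.4.18, Remark~VI.4.20]{jac_shi_03} applies. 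Without this piece, the limit identification does not close. Finally, a smaller point: your displayed estimate in Step~2 weights the $L^p$ bound with $(t-s)^{-\eta p}$, but that weight belongs to the Sobolev--Slobodeckij seminorm bound; the paper's $L^p(0,T)$ estimate is a convolution Gronwall inequality in $|K|^p$ alone, and the Bichteler--Jacod/Kunita inequality (Lemma~\ref{L:novikovineq}) is the specific tool that makes the jump part tractable.
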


An overview of the proof of Theorem~\ref{T_weak_existence} is given below, and the formal argument is in Section~\ref{S_pf_Tex1}. However, let us first mention several kernels of interest that satisfy \eqref{eq_K_Wpnu}.

\begin{example}\label{ex:kernels}
\begin{enumerate}
\item\label{ex:kernels_1}
Consider the  kernel $K(t)=t^{\gamma-1}$ with $\gamma>\frac12$, which is singular when $\gamma < 1$. Then with $\eta\in(0,(\gamma-\frac12) \wedge 1)$ one has $2\gamma-2\eta-1>0$, and therefore
\[
\int_0^T |K(t)|^2 t^{-2\eta}dt = \frac{T^{2\gamma-2\eta-1}}{2\gamma-2\eta-1}
\]
and  
\[
\int_0^T \int_0^T \frac{|K(t)-K(s)|^2}{|t-s|^{1+2\eta}}ds\,dt = \frac{2T^{2\gamma-2\eta-1}}{2\gamma-2\eta-1} \int_0^1 \frac{(u^{\gamma-1}-1)^2}{(1-u)^{1+2\eta}} du.
\]
These expressions are locally bounded in $T$, so \eqref{eq_K_Wpnu} holds with $p=2$.
\item\label{ex:kernels:2}
Consider a locally Lipschitz kernel $K$ with optimal Lipschitz constant $L_T$  over $[0, T]$. Let $p \in [2,\infty)$ and choose $\eta < \frac{1}{p}$. 
Then
\begin{align*}
\int_0^T |K(t)|^p t^{-\eta p} dt \leq \max_{t\in[0,T]}| K(t)|^p \frac{T^{1-\eta p}}{1-\eta p}
\end{align*} 
and
\[
\int_0^T \int_0^T \frac{|K(t)-K(s)|^p}{|t-s|^{1+2\eta}}ds\,dt \leq 
L_T^p \int_0^T \int_0^T |t-s|^{p-1-2\eta} ds \, dt.
\]
Since $1-\eta p>0$ and hence $p-2\eta > 0$, these expressions are locally bounded in $T$. Thus \eqref{eq_K_Wpnu} holds.
\item\label{ex_K_iii} Consider two kernels $K_1$ and $K_2$. Suppose $K_1 \in L^p_{\rm loc}$ satisfies \eqref{eq_K_Wpnu} for some $p \in [2, \infty)$ and $\eta \in (0,1)$, and $K_2$ is locally Lipschitz. Then it is not hard to check that the product $K=K_1K_2$ satisfies \eqref{eq_K_Wpnu} with the same $p$ and $\eta$ as $K_1$. An example of this kind is the exponentially dampened singular kernel  $K(t)=t^{\gamma-1} e^{-\beta t}$ with $\gamma \in (\frac{1}{2},1)$ and $\beta \geq 0$. For this kernel one can take $p=2$ and any $\eta \in (0,\gamma -\frac{1}{2})$.
\end{enumerate}
\end{example}

The proof of Theorem~\ref{T_weak_existence} is based on approximation and weak convergence of laws on suitable function spaces. The semimartingale $Z$ has trajectories in the Skorokhod space $D=D(\R_+,\R^k)$ of c\`adl\`ag functions. Weak convergence in $D$ is a classical tool used, for example, to obtain weak solutions of stochastic differential equations with jumps (see, e.g., \citet{EK:05}). However, as explained in Section~\ref{s:pathreg}, the trajectories of $X$ need not be c\`adl\`ag, only locally $p$-integrable. Thus it is natural to regard $X$ as a random element of the Polish space $L^p_{\rm loc}=L^p_{\rm loc}(\R_+,\R^d)$. It is in this space---or rather, the product space $L^p_{\rm loc}\times D$---that our weak convergence analysis takes place.

{Relative compactness} in $L^p$ is characterized by the Kolmogorov--Riesz--Fr\'echet theorem; see e.g.\ \citet[Theorem~4.26]{brezis2010functional}. A more convenient criterion in our context uses the Sobolev--Slobodeckij norms, defined for any measurable function $f\colon\R_+\to\R^d$ by
\[
\| f \|_{W^{\eta,p}(0,T)} = \left( \int_0^T |f(t)|^p dt + \int_0^T \int_0^T \frac{|f(t)-f(s)|^p}{|t-s|^{1+\eta p}} ds\, dt \right)^{1/p},
\]
where $p\ge1$, $\eta\in(0,1)$, $T\ge0$ are parameters. The relation between these norms and $L^p$ spaces is somewhat analogous to the relation between H\"older norms and spaces of continuous functions. In particular, balls with respect to $\| \fdot \|_{W^{\eta,p}(0,T)}$ are {relatively compact} in $L^p(0,T)$; see e.g.~\citet[Theorem 2.1]{flandoli1995martingale}. The following a priori estimate clarifies the role of the conditions \eqref{eq_K_Wpnu} and \eqref{eq_LG} in Theorem~\ref{T_weak_existence}, and is the key tool that allows us to obtain convergent sequences of approximate $L^p$ solutions. The proof is given in Section~\ref{S_apriori}.

\begin{theorem}\label{T_apriori}
Let $d,k\in\N$, $p\in[2,\infty)$, and consider data $(g_0,K,b,a,\nu)$ as in \ref{SVE_IC}--\ref{SVE_abnu}. Assume there exists a constant $c_{\rm LG}$ such that  \eqref{eq_LG} holds. Then any weak $L^p$ solution $X$ of \eqref{eq_SVEC} for the data $(g_0,K,b,a,\nu)$ satisfies
\begin{align}\label{eq:estimateLp}
\E[ \|X\|_{L^{p}(0,T)}^p] \le c,
\end{align}
where $c<\infty$ only depends on $d,k,p, c_{\rm LG}, T, \|g_0\|_{L^p(0,T)}$, and, $L^p$-continuously, on $K|_{[0,T]}$. If in addition there exist a constant $\eta\in(0,1)$ and a locally bounded function $c_K\colon\R_+\to\R_+$ such that \eqref{eq_K_Wpnu} holds, then 
\begin{equation}\label{eq:estimateW}
\E[ \|X-g_0\|_{W^{\eta,p}(0,T)}^p] \le c,
\end{equation}
where $c<\infty$ only depends on $d,k,p,\eta, c_K, c_{\rm LG},T$.
\end{theorem}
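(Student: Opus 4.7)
The plan is to derive, for each $t$, a Volterra integral inequality for $\phi(t):=\E[|X_t|^p]$, close it by a generalized Gronwall lemma, and then repeat the analysis on the increments $X_t - X_s$ in order to produce \eqref{eq:estimateW}.

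First I decompose $Z = B + Z^c + \widetilde N$ into drift, continuous martingale and compensated jump parts with characteristics $b(X)$, $a(X)$ and $\nu(X_{s-},d\zeta)$ respectively, which gives
\[
X_t - g_0(t) = \int_0^t K(t-s) b(X_s) ds + \int_0^t K(t-s) dZ^c_s + \int_0^t\!\!\int_{\R^k} K(t-s)\zeta (\mu-\nu)(ds,d\zeta).
\]
The drift is handled by H\"older, yielding $\int |K|^p|b(X)|^p$ controlled via \eqref{eq_LG}. The continuous martingale is treated by BDG, producing $\E[(\int|K|^2 a(X)du)^{p/2}]$; Minkowski's integral inequality in $L^{p/2}$ combined with Jensen against the weight $|K(t-u)|^2 du/\int_0^t |K(t-v)|^2 dv$ then yields a bound of the form $(\int_0^t|K|^2)^{p/2-1}\int_0^t|K(t-u)|^2(1+\phi(u))du$. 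The jump martingale, via Kunita's inequality, supplies directly an $|K|^p$ piece (using the $p$-th moment bound on $\nu$ in \eqref{eq_LG}) plus an $|K|^2$ piece handled as in the continuous case. Altogether $\phi$ satisfies a Volterra inequality whose kernel is built from $|K|^p$ and $|K|^2$, with constants depending on $p$, $T$, $c_{\rm LG}$ and on $K|_{[0,T]}$ in an $L^p$-continuous manner.

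To invoke a Volterra--Gronwall lemma one needs $\phi$ to be a priori finite. Since $X$ is only known to lie in $L^p_{\rm loc}$ pathwise (no c\`adl\`ag structure), I localize through $\tau_n:=\inf\{t:\int_0^t(1+|X_s|^p)ds>n\}$, which tends to $\infty$ almost surely, apply the previous estimate to $X^{\tau_n}$, extract a uniform-in-$n$ bound from a Gronwall--Bellman lemma for $L^1_{\rm loc}$ kernels (in the spirit of \citet{Z:10} and \citet{AJLP17}), and pass to the limit by Fatou; integration in $t\in[0,T]$ then gives \eqref{eq:estimateLp}. For the Sobolev--Slobodeckij estimate I split, for $0\le s<t\le T$,
\[
(X_t-g_0(t))-(X_s-g_0(s)) = \int_s^t K(t-u)dZ_u + \int_0^s \bigl(K(t-u)-K(s-u)\bigr)dZ_u,
\]
and rerun the same BDG/Kunita/Jensen scheme. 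Dividing the resulting moment bound by $|t-s|^{1+\eta p}$, integrating over $(s,t)\in[0,T]^2$ and using Fubini together with the elementary bound $\int_0^u (t-s)^{-1-\eta p}ds\le (\eta p)^{-1}(t-u)^{-\eta p}$, the two contributions collapse exactly onto the two seminorms of $K$ appearing in \eqref{eq_K_Wpnu}, each multiplied by $\int_0^T(1+\phi(u))du$, which is finite by the first part.

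The delicate point is ensuring that every $\bigl(\int|K|^2\cdot\bigr)^{p/2}$ term produced by BDG can be reorganized into a convolution whose kernel is exactly one of the objects controlled in the data: $\|K|_{[0,T]}\|_{L^p}$ for \eqref{eq:estimateLp} and the two seminorms in \eqref{eq_K_Wpnu} for \eqref{eq:estimateW}. For $p=2$ this is automatic; for $p>2$ one must carefully track the $|K|^{p-2}$ factor produced by Jensen and absorb it into deterministic constants rather than letting it compete with $(t-s)^{-1-\eta p}$ in the Fubini step. A secondary subtlety is the absence of c\`adl\`ag structure for $X$, which forces the localization above to be performed through an integrated functional rather than a standard first-passage time.
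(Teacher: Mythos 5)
Your overall architecture — semimartingale decomposition, BDG/Kunita estimates, a Volterra-type inequality closed by a convolution Gronwall lemma under $\tau_n$-localization, then the Fubini manipulation for the Sobolev--Slobodeckij seminorm — matches the paper's. Working with $\phi(t)=\E[|X_t|^p]$ rather than the paper's $f_n(t)=\E[\|X^n\|^p_{L^p(0,t)}]$ is a cosmetic variation, and your localization through the integrated functional is essentially the paper's $\tau_n$.

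There is, however, a genuine gap in the way you treat the $\bigl(\int|K|^2\cdot\bigr)^{p/2}$ terms from BDG and Kunita. You apply Jensen against the probability measure $|K(t-u)|^2\,du/\int_0^t|K(t-v)|^2\,dv$, which produces the bound $\bigl(\int_0^t|K|^2\bigr)^{p/2-1}\int_0^t|K(t-u)|^2(1+\phi(u))\,du$. After absorbing the prefactor into a constant, the convolution kernel left inside is $|K|^2$, not $|K|^p$. This is harmless for \eqref{eq:estimateLp}, but it breaks \eqref{eq:estimateW} when $p>2$: after the Fubini step you would then need $\int_0^T|K(t)|^2\,t^{-\eta p}\,dt$ and $\int_0^T\!\!\int_0^T|K(t)-K(s)|^2|t-s|^{-1-\eta p}\,ds\,dt$ to be finite, and these are \emph{not} implied by \eqref{eq_K_Wpnu}. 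For instance with $K(t)=t^{4/5}$, $p=4$, $\eta=9/10$ (so $\eta p=18/5>1$) one has $\int_0^T|K(t)|^p\,t^{-\eta p}\,dt=\int_0^T t^{-2/5}\,dt<\infty$ but $\int_0^T|K(t)|^2\,t^{-\eta p}\,dt=\int_0^T t^{-2}\,dt=\infty$. Your closing remark about absorbing "the $|K|^{p-2}$ factor into deterministic constants rather than letting it compete with $(t-s)^{-1-\eta p}$" handles the prefactor $\|K\|^{p-2}_{L^2(0,T)}$, but it is the $|K|^2$ kernel itself that competes with $(t-s)^{-1-\eta p}$ in the Fubini step, and that competition is lost for $p>2$ and $\eta p\ge1$. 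The paper resolves this by applying Jensen (equivalently H\"older) against Lebesgue measure:
\[
\Bigl(\int_s^t|K(t-u)|^2\,a(X_u)\,du\Bigr)^{p/2}\le(t-s)^{p/2-1}\int_s^t|K(t-u)|^p\,a(X_u)^{p/2}\,du,
\]
which places $|K|^p$ inside the convolution at the cost of a factor $(t-s)^{p/2-1}$ that is bounded by $T^{p/2-1}$ since $p\ge2$, and is then absorbed into $C(c_{\rm LG},p,T)$. This produces exactly the $|K|^p$ kernels and is what makes Lemma~\ref{L_K_bounds} applicable under \eqref{eq_K_Wpnu} alone. With that substitution your proof would go through; as written it does not.
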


An immediate corollary is the following tightness result.

\begin{corollary}\label{C_tightness}
Fix $d,k,p,\eta, c_K, c_{\rm LG}$ as in Theorem~\ref{T_apriori}, and let $G_0\subset L^p_{\rm loc}$ be relatively compact. Let $\Xcal$ be the set of all weak $L^p$ solutions $X$ of \eqref{eq_SVEC} as $g_0$ ranges through $G_0$, $K$ ranges through all kernels that satisfy \eqref{eq_K_Wpnu} with the given $\eta$ and $c_K$, and $(b,a,\nu)$ ranges through all characteristic triplets that satisfy \eqref{eq_LG} with the given $c_{\rm LG}$. Then $\Xcal$ is tight, in the sense that the family $\{\text{Law}(X)\colon X\in \Xcal\}$ is tight in $\Pcal(L^p_{\rm loc})$.
\end{corollary}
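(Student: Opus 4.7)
The plan is to combine the uniform Sobolev--Slobodeckij estimate \eqref{eq:estimateW} with the compact embedding of $W^{\eta,p}$-balls into $L^p$ and the projective-limit description $L^p_{\rm loc}=\bigcap_{T\in\N}L^p(0,T)$, so that a compact set in $L^p_{\rm loc}$ can be stitched together from compact sets in each $L^p(0,T)$.

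First, I would note that the constant $c$ in \eqref{eq:estimateW} depends only on the parameters $d,k,p,\eta,c_K,c_{\rm LG}$ that are fixed in the corollary and on $T$, hence it is the same for every $X\in\Xcal$. For each $T\in\N$ this yields a constant $M_T<\infty$ such that $\E[\|X-g_0\|_{W^{\eta,p}(0,T)}^p]\le M_T$ for every $X\in\Xcal$ and its associated $g_0\in G_0$. Since $G_0\subset L^p_{\rm loc}$ is relatively compact, the restriction $\{g_0|_{[0,T]}:g_0\in G_0\}$ is relatively compact in $L^p(0,T)$; let $C_T$ denote its closure, which is compact.

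Next, I would fix $\varepsilon>0$ and set $R_T=(2^{T+1}M_T/\varepsilon)^{1/p}$. Let $B^T_{R_T}$ denote the closed $W^{\eta,p}(0,T)$-ball of radius $R_T$ viewed as a subset of $L^p(0,T)$; it is relatively compact in $L^p(0,T)$ by \citet[Theorem~2.1]{flandoli1995martingale}. Define $A_T:=C_T+\overline{B^T_{R_T}}$, where the closure is taken in $L^p(0,T)$. This is the image of the compact set $C_T\times\overline{B^T_{R_T}}$ under continuous vector addition, hence compact in $L^p(0,T)$. Using the decomposition $X|_{[0,T]}=g_0|_{[0,T]}+(X-g_0)|_{[0,T]}$ together with Markov's inequality,
\[
\P\bigl[X|_{[0,T]}\notin A_T\bigr]\le\P\bigl[\|X-g_0\|_{W^{\eta,p}(0,T)}^p>R_T^p\bigr]\le\frac{M_T}{R_T^p}=\frac{\varepsilon}{2^{T+1}}
\]
uniformly over $X\in\Xcal$.

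Finally, I would set $\Kcal_\varepsilon:=\{f\in L^p_{\rm loc}:f|_{[0,T]}\in A_T\text{ for every }T\in\N\}$. Each restriction map $L^p_{\rm loc}\to L^p(0,T)$ is continuous, so $\Kcal_\varepsilon$ is closed. A standard diagonal extraction based on the compactness of each $A_T$ shows that every sequence in $\Kcal_\varepsilon$ has a subsequence converging in $L^p(0,T)$ for all $T$, i.e.\ converging in $L^p_{\rm loc}$, with limit in $\Kcal_\varepsilon$ by closedness; hence $\Kcal_\varepsilon$ is compact in the Polish space $L^p_{\rm loc}$. A union bound then gives $\P[X\notin\Kcal_\varepsilon]\le\sum_{T\in\N}\varepsilon\,2^{-T-1}\le\varepsilon$ for every $X\in\Xcal$, proving tightness. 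All probabilistic work is packaged into Theorem~\ref{T_apriori}; the only mildly delicate point, and the one I expect to require the most care, is the purely topological step of assembling compact sets in the individual $L^p(0,T)$ into a single compact set in the projective limit $L^p_{\rm loc}$.
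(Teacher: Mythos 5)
Your proof is correct and follows essentially the same route as the paper: apply the Sobolev--Slobodeckij a priori estimate \eqref{eq:estimateW}, Markov's inequality, and the compact embedding of $W^{\eta,p}$-balls into $L^p(0,T)$, then account for $g_0\in G_0$ via relative compactness of $G_0$. The only difference is organizational: the paper first shows $\{X-g_0\}$ is tight in $L^p_{\rm loc}$ and then adds back $G_0$, asserting the projective-limit step in one sentence, whereas you build a single compact set $\Kcal_\varepsilon$ directly and spell out the diagonal-extraction argument that assembles the $L^p(0,T)$-compact sets into an $L^p_{\rm loc}$-compact set; this is more verbose but makes explicit a step the paper leaves to the reader.
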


\begin{proof}
Fix $T\in\R_+$ and let $c$ be the constant in \eqref{eq:estimateW}. For any $m>0$, Markov's inequality gives
\[
\sup_{X\in\Xcal} \P( \|X-g_0\|_{W^{\eta,p}(0,T)} > m) \le \frac{c}{m^p}.
\]
The balls $\{f\colon \|f\|_{W^{\eta,p}(0,T)} \le m\}$ are relatively compact in $L^p(0,T)$, so the above estimate implies that the family $\{(X-g_0)|_{[0,T]}\colon X\in\Xcal\}$ is tight in $L^p(0,T)$. Since $T$ was arbitrary, it follows that $\Xcal_0=\{X-g_0\colon X\in \Xcal\}$ is tight in $L^p_{\rm loc}$. Since $G_0$ is relatively compact, $G_0+\Xcal_0$ is tight as well, and it contains $\Xcal$. Thus $\Xcal$ is tight.
\end{proof}

The second main ingredient in the proof of Theorem~\ref{T_weak_existence} relies on a reformulation of \eqref{eq_SVEC} as a certain martingale problem.  This martingale problem is introduced in Section~\ref{S_stability}, and it is shown in Lemma~\ref{L_SVE_iff_MGP} that weak $L^p$ solutions of \eqref{eq_SVEC} can equivalently be understood as solutions of the martingale problem. This point of view is useful because it leads to the following stability result, which under appropriate conditions asserts that the weak limit of a sequence of solutions is again a solution. The proof is given at the end of Section~\ref{S_stability}. Recall that $D$ denotes the Skorokhod space of c\`adl\`ag functions from $\R_+$ to $\R^k$.

\begin{theorem}\label{T_stability}
Let $d,k\in\N$, $p\in[2,\infty)$. For each $n\in\N$, let $(X^n,Z^n)$ be a weak $L^p$ solution of \eqref{eq_SVEC} given data $(g_0^n,K^n,b^n,a^n,\nu^n)$ as in \ref{SVE_IC}--\ref{SVE_abnu}. Assume the triplets $(b^n,a^n,\nu^n)$ all satisfy \eqref{eq_LG} with a common constant $c_{\rm LG}$.  Assume also, for some $(g_0,K,b,a,\nu)$ and limiting process $(X,Z)$, that:
\begin{itemize}
\item $g_0^n\to g_0$ in $L^p_{\rm loc}$,
\item $K^n\to K$ in $L^p_{\rm loc}$,
\item $(b^n,a^n,\nu^n)\to (b,a,\nu)$ in the sense that $A^nf \to Af$ locally uniformly on $\R^d\times\R^k$ for every $f\in C^2_c(\R^k)$, where $Af$ is defined in terms of the characteristic triplet by
\[
\begin{aligned}
Af(x,z) &= b(x)^\top \nabla f(z) + \frac12 \tr(a(x)\nabla^2f(z)) \\
&\quad + \int_{\R^k}(f(z+\zeta)-f(z)-\zeta^\top\nabla f(z))\nu(x,d\zeta),
\end{aligned}
\]
and $A^nf$ is defined analogously and is assumed to be continuous for every such $f$,
\item $(X^n,Z^n)\Rightarrow(X,Z)$ in $L^p_{\rm loc}\times D$.
\end{itemize}
Then $(X,Z)$ is a weak $L^p$ solution of \eqref{eq_SVEC} for the data $(g_0,K,b,a,\nu)$.
\end{theorem}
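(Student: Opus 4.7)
The strategy is to use Lemma~\ref{L_SVE_iff_MGP}, which identifies weak $L^p$ solutions of~\eqref{eq_SVEC} with solutions of an equivalent Volterra martingale problem. Each $(X^n, Z^n)$ solves the martingale problem for its data $(g_0^n, K^n, b^n, a^n, \nu^n)$, and I would show that the limit $(X, Z)$ solves the martingale problem for $(g_0, K, b, a, \nu)$; the equivalence then yields that $(X, Z)$ is itself a weak $L^p$ solution.

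First, I would apply Skorokhod's representation theorem on the Polish space $L^p_{\rm loc}(\R_+, \R^d) \times D(\R_+, \R^k)$ to realize the convergence $(X^n, Z^n) \to (X, Z)$ almost surely on a common probability space. This gives $X^n \to X$ in $L^p_{\rm loc}$ a.s.\ and $Z^n_t \to Z_t$ a.s.\ at every continuity point of $Z$, which form a co-countable subset of $\R_+$. The generic martingale identity for the problem takes the form $\E[(M^{n,f}_t - M^{n,f}_s)\, H_s(X^n, Z^n)] = 0$, where $M^{n,f}$ is built from $f(Z^n)$ minus a time integral of $A^n f(X^n_\cdot, Z^n_\cdot)$ for test functions $f \in C^2_c(\R^k)$, and $H_s$ is a bounded continuous functional of the trajectory up to time $s$; further identities in the martingale problem encode the convolution relation through pairings with $g_0^n$ and $K^n$.

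To pass to the limit, $f(Z^n_t) \to f(Z_t)$ and $H_s(X^n, Z^n) \to H_s(X, Z)$ a.s.\ at continuity points of $Z$ by the Skorokhod coupling. For the generator integral, the locally uniform convergence $A^n f \to A f$ on $\R^d \times \R^k$, combined with the a.s.\ $L^p_{\rm loc}$ convergence of $X^n$ and the c\`adl\`ag convergence of $Z^n$, gives pointwise a.s.\ convergence of $\int_s^t A^n f(X^n_r, Z^n_r)\, dr$. Uniform integrability follows from the common growth bound~\eqref{eq_LG} together with the a priori estimate in Theorem~\ref{T_apriori}, which supply $\sup_n \E[\|X^n\|_{L^p(0, T)}^p] < \infty$ and the bound $|A^n f(x, z)| \le C_f(1 + |x|^2)$ uniformly in $n$ for $f \in C^2_c(\R^k)$ (the jump term being controlled by $\int (|\zeta|^2 \wedge 1)\,\nu^n(x, d\zeta)$). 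The identities encoding the deterministic convolution constraint transfer by $L^p_{\rm loc}$ continuity of convolution together with $g_0^n \to g_0$, $K^n \to K$ in $L^p_{\rm loc}$, and the a priori $L^p$ bound on $X^n$.

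The main obstacle is the convergence of the generator integrals, since the coefficients have unbounded polynomial dependence on $x$ and $X^n \to X$ only in $L^p$-sense, not pointwise in $t$. The simultaneous use of locally uniform coefficient convergence (for pointwise a.e.\ convergence of the integrand) and the uniform a priori $L^p$ estimate (for uniform integrability) is what makes the limiting procedure go through. Crucially, the martingale-problem formulation from Lemma~\ref{L_SVE_iff_MGP} avoids having to take a direct limit of the stochastic integrals $\int K^n(t - s)\, dZ^n_s$, which would be essentially intractable when $K$ is singular.
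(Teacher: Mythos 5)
Your overall strategy matches the paper's: translate to the local martingale problem via Lemma~\ref{L_SVE_iff_MGP}, realize the convergence almost surely via Skorokhod representation, pass to the limit in martingale identities using the locally uniform convergence $A^n f \to A f$, and verify the convolution constraint via $L^p_{\rm loc}$ convergence of $g_0^n$, $K^n$, $X^n$. Your final observation -- that the martingale-problem reformulation avoids taking direct limits of singular stochastic convolutions -- correctly identifies the key structural advantage. However, there are several genuine gaps in the execution.

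\emph{Localization.} You write the identity $\E[(M^{n,f}_t - M^{n,f}_s)\,H_s(X^n,Z^n)] = 0$ directly, but $M^{n,f}$ is only a \emph{local} martingale; this identity is not available as stated. You must first localize with stopping times $\tau_n$ designed so that $\int_0^{\tau_n}(1+|X^n_s|^p)\,ds$ is deterministically bounded by some $m$, making the stopped integrand, and hence the stopped $M^{n,f}$, deterministically bounded by $\|f\|_\infty + m\,c_f$. This gives bounded convergence with no appeal to an a priori estimate. This matters twice: it is what makes $M^{n,f}$ a true martingale, and it is what supplies the dominating bound in the limit passage.

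\emph{Atoms of the stopping time.} After localizing, you must show $Z^n_{t\wedge\tau_n} \to Z_{t\wedge\tau}$. Convergence in $D$ does not give convergence of $Z^n$ at $\tau$ unless $Z$ is almost surely continuous at $\tau$, and if $\tau$ is a deterministic functional of $X$ there is no reason for this to hold. The paper's resolution is to introduce an independent $\Fcal^n_0$-measurable standard uniform $U^n$ and define $\tau$ as a function of $(U^n, X^n)$ so that, conditionally on the limit trajectory, $\tau$ has an atomless law; then $\P(\tau = T_i(Z)) = 0$ for every jump time $T_i$. Your sketch is silent on this issue and, as written, the limit of $f(Z^n_{t\wedge\tau_n})$ would not be controlled.

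\emph{Uniform integrability.} You argue that uniform integrability of the generator integrals follows from $\sup_n \E[\|X^n\|_{L^p(0,T)}^p] < \infty$. For $p=2$, the bound $|A^n f(x,z)| \le C_f(1+|x|^2)$ combined with this moment bound only gives an $L^1(\P)$ bound on the generator integral, which is not uniform integrability. The stopping-time localization above is what gives a genuine deterministic domination. Moreover, the ``pointwise a.s.\ convergence'' of $\int_s^t A^n f(X^n_r, Z^n_r)\,dr$ requires more than locally uniform convergence of $A^n f$, since $X^n_r$ is unbounded: one also needs uniform integrability of $\{|X^n_r|^p\}_n$ over $r\in[0,T]$, which follows trajectory-wise from $L^p$ convergence via Vitali, plus a modulus-of-continuity argument for the limit coefficient on compacts. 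This is the content of Lemma~\ref{L_conv_int}, and its hypotheses (and the Jensen trick for the modulus-of-continuity piece) need to be checked, not just asserted.

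\emph{Predictability.} Finally, the Skorokhod representation produces a limit $X$ on a new probability space which is not automatically predictable with respect to the filtration that makes $Z$ adapted. The paper replaces $X$ by $\liminf_{h\downarrow 0}\frac1h\int_{(t-h)\vee 0}^t X_s\,ds$, which is predictable and agrees with $X$ Lebesgue-a.e.\ by the differentiation theorem. This step is missing from your sketch but is needed for $(X,Z)$ to be a bona fide weak $L^p$ solution.
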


It is important to appreciate that no pointwise convergence of characteristic triplets is required in Theorem~\ref{T_stability}. For example, it may happen that $a^n=0$ for all $n$, but the limiting triplet has $a\ne0$. This is because diffusion can be approximated by small jumps, and we indeed make use of this in a crucial manner.

By combining the tightness and stability results with an approximation scheme for the characteristic triplet, we reduce the existence question to the pure jump case where $Z$ is piecewise constant with bounded jump intensity. A solution $X$ can then be constructed directly. The details are given in Section~\ref{S_pf_Tex1}.

At this point it is natural to ask about uniqueness of solutions to \eqref{eq_SVEC}. Standard counterexamples for SDEs reveal that no reasonable uniqueness statement will hold at the level of generality of Theorem~\ref{T_weak_existence}. Additional assumptions are needed. In Section~\ref{s:uniqueness} we prove a pathwise uniqueness theorem under suitable Lipschitz conditions; see Theorem~\ref{T_uniqueness}. This in turn yields uniqueness in law via the abstract machinery of \citet{K:14} and, as a by-product, strong existence. As for SDEs, uniqueness in the non-Lipschitz case is more delicate and not treated here. In certain situations, uniqueness in law can still be established; see for instance \cite{AJLP17} for the case of affine characteristics and continuous trajectories.

In Section~\ref{s:pathreg} we turn to path regularity of solutions $X$ of \eqref{eq_SVEC}. Basic examples show that $X$ can be as irregular as the kernel $K$ itself. However, often additional information is available that allows one to assert better path regularity. Criteria of this kind are collected in Theorem~\ref{T_pathreg}. 

At this stage let us mention various path regularity results for stochastic convolutions that already exist in the literature. For one-dimensional continuous kernels $K$, stochastic convolutions $\int_0^t K(t-s)dW_s$ with $W$ a standard Brownian motion may fail to be locally bounded in $t$ \citep[Theorem~1]{BPZ01}. However, under appropriate conditions on $K$, allowing in particular for certain singular kernels, a version with H\"older sample paths exists \citep[Lemma 2.4]{AJLP17}. If $W$ is replaced by a pure jump process,  \citet[Theorem~4]{R89} showed that the stochastic convolution fails to be locally bounded whenever the kernel is singular. Similar results appear in infinite dimensions, see \citet[Theorem 7.1]{BZ10}. Under additional regularity of the kernel, existence of H\"older continuous versions for fractional L\'evy processes has been established by \citet{M:06} and \citet{MN:11}.

Finally, in Section~\ref{S:applications} we sketch how our results can be applied to scaling limits of Hawkes processes (Subsection~\ref{S_app_1}) and approximations of solutions of \eqref{eq_SVEC} by means of finite-dimensional systems of Markovian SDEs (Subsection~\ref{S_app_2}).

Some basic auxiliary results are gathered in the appendix.

\section{Sobolev--Slobodeckij a priori estimate}\label{S_apriori}

This section is devoted to the proof of Theorem~\ref{T_apriori}. We will need the following inequality, taken from \citet[Theorem~1]{MR14}. It first appeared in \citet[Theorem 1]{N75}, but is also known as the {\em Bichteler--Jacod inequality} or {\em Kunita estimate}. We refer to \citet{MR14} for a historical survey of these maximal inequalities.

\begin{lemma}\label{L:novikovineq} Let $\mu$ be a random measure with compensator $\nu$, and define $\bar \mu=\mu-\nu$.  For any $T\in\R_+$ and $g$ such that the integral 
\[
M_t = \int_{[0,t)\times\R^k} g(s,\zeta) \bar \mu(ds,d\zeta)
\]
is well-defined for all $t\in[0,T]$, one has the inequality
\begin{equation*}
\begin{aligned}
\E\Big[ \sup_{t \leq T} \left|  M_t \right|^p\Big] \le C(p,T)\,\E\Big[ & \int_{[0,T)\times\R^k}|g(s,\zeta)|^p  \nu(ds,d\zeta) \\
&\quad +\Big(\int_{[0,T)\times\R^k}|g(s,\zeta)|^2   \nu(ds,d\zeta) \Big)^{p/2}\Big],
\end{aligned}
\end{equation*}
for any $p\geq 2$, where $C(p,T)$ only depends on $p$ and $T$.
\end{lemma}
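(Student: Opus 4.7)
The plan is to apply Itô's formula to $|M|^p$ and close the estimate via Doob's $L^p$ inequality together with Young's inequality, using a localization argument to justify the final absorption step. First, I would introduce stopping times $\tau_n\uparrow\infty$ such that each $M^{\tau_n}$ is a bounded martingale, establish the inequality for each $M^{\tau_n}$, and then pass to the limit by Fatou's lemma on the left and monotone convergence on the right. This reduces matters to the situation in which $\E[\sup_{t\le T}|M_t|^p]$ is a priori finite.

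Next I would apply Itô's formula to $M$ with the $C^2$ function $F(x)=|x|^p$. Since $M$ is purely discontinuous, this produces a decomposition of the form
\[
|M_t|^p = N_t + \int_{[0,t)\times\R^k}\Phi\bigl(M_{s-},g(s,\zeta)\bigr)\,\nu(ds,d\zeta),
\]
where $N$ is a local martingale with $N_0=0$ and the drift integrand
\[
\Phi(a,b)=|a+b|^p-|a|^p-p|a|^{p-2}a^\top b
\]
satisfies the standard Taylor-type bound $|\Phi(a,b)|\le C_p(|a|^{p-2}|b|^2+|b|^p)$ valid for all $a,b$ and $p\ge 2$ (verified by splitting into the regimes $|b|\le |a|/2$ and $|b|>|a|/2$). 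Localizing $N$ into a true martingale and taking expectations yields
\[
\E[|M_t|^p]\le C_p\,\E\!\left[\int_{[0,t)\times\R^k}\bigl(|M_{s-}|^{p-2}|g|^2+|g|^p\bigr)\nu(ds,d\zeta)\right].
\]

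For the first summand I would pull out $|M_{s-}|^{p-2}\le (\sup_{u\le T}|M_u|)^{p-2}$ and apply Young's inequality with conjugate exponents $p/(p-2)$ and $p/2$ to split the expectation: for any $\epsilon>0$,
\[
\E\!\left[\sup_{t\le T}|M_t|^{p-2}\!\!\int_{[0,T)\times\R^k}\!|g|^2 d\nu\right]\le \epsilon\,\E\bigl[\sup_{t\le T}|M_t|^p\bigr]+C_{p,\epsilon}\,\E\!\left[\Big(\!\int_{[0,T)\times\R^k}\!|g|^2 d\nu\Big)^{p/2}\right].
\]
Combining with Doob's $L^p$ inequality $\E[\sup_{t\le T}|M_t|^p]\le (p/(p-1))^p\,\E[|M_T|^p]$ then produces a bound of the form $\E[\sup|M|^p]\le \widetilde C_p\epsilon\,\E[\sup|M|^p]$ plus the desired right-hand side, and choosing $\epsilon$ small enough to make $\widetilde C_p\epsilon<\tfrac12$ absorbs the supremum term into the left.

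The main subtlety lies in this absorption step, which relies on the a priori finiteness of $\E[\sup_{t\le T}|M_t|^p]$ — hence the localization at the outset; undoing the stopping at the end via Fatou and monotone convergence recovers the full inequality. A secondary point is checking that the elementary bound on $\Phi$ holds uniformly in $a,b$ (not just small $b$), which is what produces the $|b|^p$ contribution and explains the appearance of the $\int|g|^p d\nu$ term in the final estimate.
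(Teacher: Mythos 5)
The paper does not actually prove this lemma: it is quoted verbatim from the literature (Novikov's inequality, a.k.a.\ the Bichteler--Jacod inequality), with the proof delegated to \citet[Theorem~1]{MR14} and \citet{N75}. Your argument reconstructs what is essentially the classical It\^o-formula proof of that inequality (one of the proofs surveyed by Marinelli and R\"ockner), and its core is sound: applying It\^o's formula to $|x|^p$ for the purely discontinuous martingale $M$, bounding the compensator drift via $|\Phi(a,b)|\le C_p(|a|^{p-2}|b|^2+|b|^p)$, splitting the mixed term with Young's inequality with exponents $p/(p-2)$ and $p/2$, and closing with Doob's inequality and an absorption argument does yield the stated bound, even without the dyadic induction on $p$ used in BDG-based proofs. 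Two technical points deserve a cleaner treatment than your sketch gives them. First, you cannot in general choose $\tau_n$ so that $M^{\tau_n}$ is \emph{bounded}: the jump at $\tau_n$ is uncontrolled. The standard fix is to assume without loss of generality that the right-hand side is finite and note that $\E[|\Delta M_{\tau_n}|^p]\le \E[\int_{[0,T]\times\R^k}|g|^p\,d\mu]=\E[\int_{[0,T]\times\R^k}|g|^p\,d\nu]<\infty$, which still gives the a priori finiteness of $\E[\sup_{t\le T}|M^{\tau_n}_t|^p]$ needed for the absorption step. Second, your identification of the jump sum in It\^o's formula as $\int\Phi(M_{s-},g(s,\zeta))\,\mu(ds,d\zeta)$ implicitly assumes $\nu(\{s\}\times\R^k)=0$, i.e.\ quasi-left-continuity; with time atoms one has $\Delta M_s=\int g(s,\zeta)\mu(\{s\},d\zeta)-\int g(s,\zeta)\nu(\{s\},d\zeta)$ and the bookkeeping changes. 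This restriction is harmless here, since in the paper the compensator is $\nu(X_s,d\zeta)\,ds$, but you should state it (or argue the general case separately) if you want the lemma at the stated level of generality.
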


We now proceed to the proof of Theorem~\ref{T_apriori}. Let therefore $d,k\in\N$, $p\in[2,\infty)$, and consider $(g_0,K,b,a,\nu)$ as in \ref{SVE_IC}--\ref{SVE_abnu}. We assume there exists a constant $c_{\rm LG}$ such that  \eqref{eq_LG} holds, and let $(X,Z)$ be a weak $L^p$ solution of \eqref{eq_SVEC} for the data $(g_0,K,b,a,\nu)$.

\begin{proof}[Proof of \eqref{eq:estimateLp}]
Observe that $Z$ admits the representation
\[
Z_t = \int_0^t b(X_s) ds + M^c_t + M^d_t, \quad t\ge0,
\]
where $M^c$ is a continuous local martingale with quadratic variation $\langle M^c \rangle = \int_0^\fdot a(X_s)ds$ and $M^d$ is a purely discontinuous local martingale whose jump measure has compensator $\nu(X,d\zeta)$. Define $\tau_n =\inf \{t\colon\int_0^t |X_s|^p ds \geq n\}\wedge T$.  Since $X$ is predictable with sample paths in $L^p_{\rm loc}$, the process $\int_0^\fdot |X_s|^p ds$ is continuous, adapted, and increasing. Thus $\tau_n$ is a stopping time for every $n$, and $\tau_n \to T$.  Define the process $X^n$ by $X^n_t=X_t\bm1_{t<\tau_n}$. We then have
\begin{align*}
&\|X^n  \|_{L^p(0,T)}^p \\
&\leq 4^{p-1}\Bigg(\|g_0\|_{L^p(0,T)}^p+\int_0^T \left|\int_{[0,t)}K(t-s)b(X^n_s)ds\right|^pdt\Bigg)\\
&\quad +4^{p-1}\Bigg(\int_0^T \left|\int_{[0,t)}K(t-s)dM^{c,n}_s\right|^pdt + \int_0^T \left|\int_{[0,t)}K(t-s)dM^{d,n}_s\right|^pdt\Bigg)\\
&=4^{p-1}\left(\|g_0\|_{L^p(0,T)}^p + \int_0^T (\mbox{\textbf{I}}_t+\mbox{\textbf{II}}_t+\mbox{\textbf{III}}_t)dt\right),
\end{align*}
where $M^{c,n}$ has quadratic variation equal to $\int_0^\fdot a(X_s^n) ds$, and the jump measure of $M^{d,n}$ has compensator $\nu(X^n,d\zeta)$. An application of the Jensen and BDG inequalities combined with Fubini's theorem and  \eqref{eq_LG} leads to 
\[
\E[\mbox{\textbf{I}}_t] + \E[\mbox{\textbf{II}}_t] \leq C(c_{\rm LG},p,T)  \int_0^t  |K(t-s)|^p (1+\E[|X^n_s |^p])ds,
\]
for {every $t \leq T$}. Thanks to  Novikov's inequality, see Lemma \ref{L:novikovineq}, we have
\begin{align*}
&\E[\mbox{\textbf{III}}_t] \le \E \left[\sup_{r \leq t} \left| \int_{{[0,r)}}K(t-s)dM_s^{d,n} \right|^p\right] \nonumber \\
&\leq  C(p,t)  \int_0^t  |K(t-s)|^p \E \left[\int_{\R^k}  |\zeta|^p \nu(X_s^n,d\zeta)  +  \left(\int_{\R^k}  |\zeta|^2  \nu(X_s^n,d\zeta)\right)^{p/2} \right]ds \nonumber \\
&\leq   C(c_{\rm LG},p,t)  \int_0^t  |K(t-s)|^p    (1+\E[|X_s^n|^p])ds, \nonumber 
\end{align*}
for {every $t \leq T$}, where the last inequality follows from \eqref{eq_LG}. Combining the above yields
\begin{align*}
\E[\|X^n&\|_{L^p(0,T)}^p]\leq C(c_{\rm LG},p,T)\\
&\times\Big(\|g_0\|_{L^p(0,T)}^p+\int_0^T\int_0^t |K(t-s)|^p(1+\E[|X^n_s|^p])ds\,dt\Big).
\end{align*}
Multiple changes of variables and applications of Tonelli's theorem yield
\begin{align*}
\int_0^T\int_0^t & |K(t-s)|^p(1+\E[|X^n_s|^p])ds\,dt \\
&=\int_0^T |K(s)|^p \int_0^{T-s} (1+\E[|X^n_{t}|^p])\,dt\,ds\\
&\leq T\|K\|_{L^p(0,T)}^p+\int_0^T |K(T-s)|^p\E[\|X^n\|_{L^p(0,s)}^p]\,ds.
\end{align*}
We deduce that the function $f_n(t)= \E[\|X^n\|_{L^p(0,t)}^p]$ satisfies the convolution inequality
\[
f_n(t) \le  C(c_{LG},p,T)\left(\|g_0\|_{L^p(0,T)}^p+ T\|K\|_{L^p(0,T)}^p\right)- (\widehat K * f_n)(t),
\]
where $\widehat K=- C(c_{LG},p,T)|K|^p$ lies in $L^1(0,T)$. The resolvent $\widehat R$ of $\widehat K$ is nonpositive and lies in $L^1(0,T)$; see \citet[Theorem~2.3.1 and its proof]{GLS:90}. Moreover, $f_n\le n$ by construction. Thus the Gronwall lemma for convolution inequalities applies; see Lemma~\ref{L_Gronwall}. In particular, we have
\[
f_n(T) \le C(c_{LG},p,T)\left(\|g_0\|_{L^p(0,T)}^p+T\|K\|_{L^p(0,T)}^p\right)\left(1+\|\widehat{R}\|_{L^1(0,T)}\right).
\]
As $n\to\infty$ we have $\tau_n\to T$, and hence {$f_n(T)\to \E[ \|X\|_{L^p(0,T)}^p ]$} by monotone convergence. We deduce \eqref{eq:estimateLp}, as desired. Finally, the continuous dependence on $K|_{[0,T]}$ follows from \citet[Theorem~2.3.1]{GLS:90}, which implies that the map from $L^p(0,T)$ to $\R$ that takes $K|_{[0,T]}$ to $\|\widehat{R}\|_{L^1(0,T)}$ is continuous.
\end{proof}

For the proof of the second part of Theorem \ref{T_apriori}, namely \eqref{eq:estimateW}, will need the following estimate.

\begin{lemma}\label{L_K_bounds}
	Let $K\colon\R_+\to\R^{d\times k}$ be measurable. For any $\eta>0$, $T\in\R_+$, $p \geq 2$ and nonnegative measurable function $f$, one has
	\begin{equation}\label{L_K_bounds_1}
	\begin{split}
	\int_0^T \int_0^T \int_{s\wedge t}^{s\vee t} \frac{|K(s\vee t-u)|^p}{|t-s|^{1+p\eta}} f(u) du\, ds\, dt \qquad \\
	\le  \|f\|_{L^1(0,T)} \frac1\eta \int_0^T |K(t)|^p t^{-\eta p}dt
	\end{split}
	\end{equation}
	and
	\begin{equation}\label{L_K_bounds_2}
	\begin{split}
	\int_0^T \int_0^T \int_0^{s\wedge t} \frac{ |K(t-u)-K(s-u)|^p }{|t-s|^{1+\eta p}} f(u) du\, ds\, dt \qquad \\
	\le \|f\|_{L^1(0,T)} \int_0^T \int_0^T \frac{|K(t)-K(s)|^p}{|t-s|^{1+\eta p}}ds\,dt.
	\end{split}
	\end{equation}
\end{lemma}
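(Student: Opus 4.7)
The plan is to prove both inequalities by straightforward applications of Fubini's theorem and suitable changes of variables; no deep estimate is required.

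For \eqref{L_K_bounds_1}, I would first exploit the symmetry of the integrand in $(s,t)$ to rewrite the left-hand side as $2\int_{\{0<s<t<T\}}$ of the same integrand with $s\wedge t=s$ and $s\vee t=t$. Then I would apply Fubini's theorem to move the $u$-integration to the outside and integrate out $s$ on $[0,u]$ explicitly, using
\[
\int_0^u (t-s)^{-1-p\eta}ds=\frac{1}{p\eta}\left[(t-u)^{-p\eta}-t^{-p\eta}\right]\le \frac{1}{p\eta}(t-u)^{-p\eta}.
\]
After the substitution $r=t-u$ in the remaining $t$-integral and bounding the $u$-integral of $f$ by $\|f\|_{L^1(0,T)}$, one obtains the estimate with constant $2/(p\eta)$, which is at most $1/\eta$ because $p\ge 2$. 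This last elementary observation is the only place where the assumption $p\ge 2$ enters.

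For \eqref{L_K_bounds_2}, I would again apply Fubini to pull the $u$-integration outside, so that for each fixed $u$ the inner double integral runs over $\{(s,t):u\le s,t\le T\}$. The key observation is that the change of variables $s\mapsto s-u$, $t\mapsto t-u$ leaves both $|t-s|$ and $|K(t-u)-K(s-u)|$ invariant and maps the domain onto $[0,T-u]^2\subset[0,T]^2$. Hence the inner integral is bounded by $\int_0^T\int_0^T |K(t)-K(s)|^p|t-s|^{-1-\eta p}ds\,dt$ uniformly in $u$, and the outer $u$-integration of $f$ yields the factor $\|f\|_{L^1(0,T)}$.

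There is no real obstacle here; the statement is essentially a bookkeeping lemma that isolates the two structural estimates on $K$ that appear in hypothesis \eqref{eq_K_Wpnu}, so they can be plugged into the proof of \eqref{eq:estimateW} with $f$ chosen to be (a power of) the expected norm of the solution. The only mildly non-automatic point is the comparison $2/(p\eta)\le 1/\eta$ noted above, which accounts for the clean form of the constant in \eqref{L_K_bounds_1}.
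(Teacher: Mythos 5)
Your proof is correct and follows essentially the same route as the paper: symmetrize in $(s,t)$ to reduce to the ordered triangle, apply Tonelli to pull the $u$-integral outside, evaluate the inner $s$-integral to get the factor $\frac{1}{\eta p}[(t-u)^{-\eta p}-t^{-\eta p}]$, and use $p\geq 2$ to absorb the factor $2/p$; for the second bound, a translation by $u$ followed by domain enlargement gives the estimate. No gaps.
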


\begin{proof}
	We first prove \eqref{L_K_bounds_1}. Since $\int_{s\wedge t}^{s\vee t}(\ldots)du=\int_0^T (\bm1_{s<u<t}+\bm1_{t<u<s})(\ldots)du$, we may re-write the left-hand side of \eqref{L_K_bounds_1} as
	\[
	2\int_0^T \int_0^T \int_0^T \bm1_{s<u<t} \frac{|K(t-u)|^p}{(t-s)^{1+\eta p}} f(u) du\, ds\, dt.
	\]
	By Tonelli's theorem this equals
	\begin{align*}
	&2\int_0^T f(u) \int_u^T |K(t-u)|^p  \int_0^u \frac{1}{(t-s)^{1+\eta p}} ds\, dt\, du \\
	&\quad = \frac 2{\eta p} \int_0^T f(u) \int_u^T|K(t-u)|^p  ((t-u)^{-\eta p}-t^{-\eta p}) dt\, du \\
	&\quad \le\frac1\eta \int_0^T f(u) \int_u^T|K(t-u)|^p  (t-u)^{-\eta p} dt\, du.
	\end{align*}
	Since $\int_u^T|K(t-u)|^p  (t-u)^{-\eta p} dt = \int_0^{T-u} |K(v)|^p  v^{-\eta p} dv \le \int_0^T |K(v)|^p  v^{-\eta p} dv$, it follows that \eqref{L_K_bounds_1} holds, as claimed.
	
	We now prove \eqref{L_K_bounds_2}. Since $\int_0^{s\wedge t}(\ldots)du=\int_0^T \bm1_{u<s}\bm1_{u<t}(\ldots)du$, and by using Tonelli's theorem, we find that the left-hand side of \eqref{L_K_bounds_2} is equal to
	\[
	\int_0^T f(u) \int_u^T \int_u^T  \frac{ |K(t-u)-K(s-u)|^p }{|t-s|^{1+\eta p}} ds\, dt\, du.
	\]
	By a change of variables one sees that this is bounded by the right-hand side of \eqref{L_K_bounds_2}, as claimed.
\end{proof}

The proof of the second part of Theorem~\ref{T_apriori} is now straightforward. In addition to the above, we assume there exist a constant $\eta\in(0,1)$ and a locally bounded function $c_K\colon\R_+\to\R_+$ such that \eqref{eq_K_Wpnu} holds.

\begin{proof}[Proof of \eqref{eq:estimateW}]
Set $\bar X= X-g_0$ and observe that
\begin{align*}
|\bar X_t-\bar X_s| &\le \Big|\int_{[0,s\wedge t)}(K(t-u)-K(s-u))dZ_u\Big| \\
&\quad + \Big|\int_{[s\wedge t,s\vee t)}K(s\vee t-u)dZ_u\Big|,\quad \text{$\P \otimes dt\otimes ds$-a.e.}
\end{align*}
A similar argument as in the proof of \eqref{eq:estimateLp} shows that $\E[\int_0^T\int_0^T \frac{|\bar X_t- \bar X_s|^p}{|t-s|^{1+\eta p}}ds\,dt]$ is bounded above by 
\begin{multline*}
C(c_{LG},p,T)\Bigg(\int_0^T\int_0^T \int_{0}^{s\wedge t}\frac{|K(t-u)-K(s-u)|^p(1+\E[|X_u|^p])}{|t-s|^{1+\eta p}}du\,ds\,dt\\
+\int_0^T\int_0^T \int_{s\wedge t}^{s\vee t}\frac{|K(s\vee t-u)|^p(1+\E[|X_u|^p])}{|t-s|^{1+\eta p}}du\,ds\,dt\Bigg).
\end{multline*}
Applying \eqref{eq:estimateLp}, as well as Lemma~\ref{L_K_bounds} with $f(u)=1+\E[|X_u|^p]$, we obtain the bound \eqref{eq:estimateW} with a constant $c<\infty$ that depends on $d,k,p,\eta, c_K, c_{\rm LG},T$ as well as, $L^p$-continuously, on $K|_{[0,T]}$. Note that the set of restrictions $K|_{[0,T]}$ of kernels that satisfy \eqref{eq_K_Wpnu} with the given $c_K$ is relatively compact in $L^p(0,T)$. By maximizing the bound over all such $K$, we obtain a bound that only depends on $d,k,p,\eta, c_K, c_{\rm LG},T$.
\end{proof}

\section{Martingale problem and stability}\label{S_stability}

We consider initial conditions $g_0$ and convolution kernels $K$ as in \ref{SVE_IC}--\ref{SVE_K} of Section~\ref{S_main}, as well as linear operators $A$ that map functions $f\in C^2_c(\R^k)$ to measurable functions $Af\colon\R^d\times\R^k\to\R$, and satisfy the following growth bound for some $p\in[1,\infty)$:
\begin{equation} \label{lingrowth_MGP}
\begin{minipage}[c][3em][c]{.75\textwidth}
\begin{center}
For every $f\in C^2_c(\R^k)$ there is a finite constant $c_f$ such that $|Af(x,z)| \le c_f(1+|x|^p)$ for all $(x,z)\in\R^d\times \R^k$.
\end{center}
\end{minipage}
\end{equation}
Note that \eqref{lingrowth_MGP} ensures that $Af(\bm x,\bm z) \in L^1_{\rm loc}(\mathbb{R}_+, \mathbb{R})$ for any pair of functions $(\bm x,\bm z)\in L^p_{\rm loc}\times D$.

\begin{definition}\label{D_MP}
Let $p\in[1,\infty)$. A {\em solution of the local martingale problem} for $(g_0,K,A)$ is a pair $(X,Z)$ of processes with trajectories in $L^p_{\rm loc}\times D$, defined on a filtered probability space $(\Omega,\Fcal,\F,\P)$, such that $X$ is predictable, $Z$ is adapted with $Z_0=0$, the process
\begin{equation}\label{D_MP_2}
M^f_t = f(Z_t) - \int_0^t Af(X_s,Z_s)ds, \quad t\ge0,
\end{equation}
is a local martingale for every $f\in C^2_c(\R^k)$, and one has the equality
\begin{equation}\label{D_MP_1}
\int_0^t X_s ds = \int_0^t g_0(s)ds + \int_0^t K(t-s)Z_sds, \quad t\ge0.
\end{equation}
\end{definition}

Note that both the left- and right-hand sides of \eqref{D_MP_1} are continuous in $t$ and equal to zero for $t=0$. For the convolution $\int_0^t K(t-s)Z_sds$, this follows because $K$ is in $L^1_{\rm loc}$ and the trajectories of $Z$ are in $L^\infty_{\rm loc}$; see \citet[Corollary~2.2.3]{GLS:90}.

Our first goal is to establish the equivalence between weak $L^p$ solutions of \eqref{eq_SVEC} and solutions of the local martingale problem. The relevant operator $A$ is given by
\begin{equation}\label{eq:form1A}
\begin{aligned}
Af(x,z) &= b(x)^\top \nabla f(z) + \frac12 \tr(a(x)\nabla^2f(z)) \\
&\quad + \int_{\R^k}\left(f(z+\zeta)-f(z)-\chi(\zeta)^\top\nabla f(z)\right)\nu(x,d\zeta),
\end{aligned}
\end{equation}
where $(b,a,\nu)$ is the given characteristic triplet and $\chi$ is the truncation function. This equivalence will allow us to establish Theorem~\ref{T_stability} by proving a stability theorem for solutions of local martingale problems; see Theorem~\ref{T_stability_MGP} below. The latter is easier, because the conditions \eqref{D_MP_2} and \eqref{D_MP_1} are more easily shown to be closed with respect to suitable perturbations of $X$, $Z$, $g_0$, $K$ and $A$.

\begin{lemma}\label{L:Fubini}
Let $p \in [2, \infty)$. Consider a kernel $K\in L^2_{\rm loc}$ and a characteristic triplet $(b,a,\nu)$ satisfying \eqref{eq_GB_1}. Let $X$ be a predictable process with trajectories in $L^p_{\rm loc}$ and let $Z$ be an It\^o semimartingale whose differential characteristics with respect to some given truncation function $\chi$ are $b(X),a(X),\nu(X,d\zeta)$.
 Then $\int_{[0,t)}K(t-s)dZ_s$ is well-defined for almost every $t\in\R_+$, and
\[
\int_0^t \left(\int_{[0,s)}K(s-u)dZ_u\right) ds = \int_0^t K(t-s)Z_sds,\quad t\ge 0.
\]
\end{lemma}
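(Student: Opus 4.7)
My plan is to reduce both sides of the claimed identity to the common expression $\int_0^t F(t-u)\,dZ_u$, where $F(r)=\int_0^r K(v)\,dv$. Well-definedness of $\int_{[0,t)}K(t-s)dZ_s$ for almost every $t$ is a direct consequence of Lemma~\ref{L_SI_welldef}: the hypothesis $K\in L^2_{\rm loc}$ together with the growth bound \eqref{eq_GB_1} controls the characteristics of $Z$ along $X$, which is enough to make the stochastic integral meaningful.

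For the left-hand side, I would invoke a stochastic Fubini theorem to exchange the order of integration. Since the deterministic change of variable $v=s-u$ gives $\int_u^t K(s-u)\,ds=F(t-u)$, the exchange formally yields $\int_0^t F(t-u)\,dZ_u$. To make this rigorous I would write the canonical decomposition of $Z$ as a drift $\int_0^\cdot b(X_s)ds$, a continuous local martingale $M^c$ with $\langle M^c\rangle=\int_0^\cdot a(X_s)ds$, a compensated integral of $\chi(\zeta)$ against the jump measure $\mu^Z-\nu(X,d\zeta)du$, and a finite-variation piece coming from the large jumps. Fubini can then be applied componentwise: ordinary Tonelli handles the drift and large-jump pieces, while the standard stochastic Fubini theorem (for continuous martingales and for compensated Poisson random measures, e.g.\ the versions in Protter's book or in Veraar's formulation) handles the martingale parts. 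The integrability conditions required—essentially pathwise finiteness of $\int_0^T\!\int_0^t|K(s-u)|^2(1+|X_u|^p)du\,ds$—follow from $K\in L^2_{\rm loc}$ combined with $\int_0^T(1+|X_s|^p)ds<\infty$ a.s., which holds because $X\in L^p_{\rm loc}$ a.s. A localization along the stopping times $\tau_n=\inf\{t:\int_0^t(1+|X_s|^p)ds\ge n\}$ reduces everything to a uniformly integrable setting, after which one lets $n\to\infty$.

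For the right-hand side, I would use a pathwise integration-by-parts argument. The map $u\mapsto F(t-u)$ is deterministic and absolutely continuous with $\tfrac{d}{du}F(t-u)=-K(t-u)$, hence has finite variation and no quadratic covariation with $Z$. It\^o's product rule gives
\[
d\bigl(F(t-u)Z_u\bigr)=F(t-u)\,dZ_u - K(t-u)\,Z_{u-}\,du.
\]
Integrating from $0$ to $t$ and using $F(0)=0$ and $Z_0=0$ together with the fact that $Z$ has only countably many jumps (so $Z_{u-}=Z_u$ Lebesgue-a.e.) yields $\int_0^t F(t-u)\,dZ_u=\int_0^t K(t-u)\,Z_u\,du$. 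Combined with the previous step, this gives the claimed identity.

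The main obstacle is the careful verification of the stochastic Fubini hypotheses under the rather weak integrability afforded by $K\in L^2_{\rm loc}$ and only polynomial growth of $(b,a,\nu)$ in $x$. The delicate case is the compensated jump part, where one must control the integrability of $\mathbf{1}_{u<s}K(s-u)$ against $\nu(X_u,d\zeta)du$ jointly in $(u,s,\zeta)$; the stopping-time localization above together with the $|\zeta|^2\wedge 1$ bound in \eqref{eq_GB_1} and a Bichteler--Jacod-type estimate as in Lemma~\ref{L:novikovineq} should make the argument routine.
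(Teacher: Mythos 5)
Your proposal is correct, and it takes a genuinely different route from the paper in the second half of the argument. The paper applies the stochastic Fubini theorem (Protter, Theorem~65) \emph{twice}: once to move the $ds$-integral inside the stochastic integral (yielding $\int_0^t\bigl(\int_0^{t-u}K(v)\,dv\bigr)dZ_u$ after a change of variables), and then a second time—with the now-deterministic integrand $K(v)\mathbf{1}_{v<t-u}$—to pull $K$ outside and land directly on $\int_0^t K(t-s)Z_s\,ds$. You instead use a single Fubini step to reach $\int_0^t F(t-u)\,dZ_u$ with $F(r)=\int_0^r K(v)\,dv$, and then complete the proof with It\^o integration by parts on $u\mapsto F(t-u)Z_u$, using that $F$ is absolutely continuous so $F(t-\cdot)$ is a deterministic continuous finite-variation process with zero covariation with $Z$, and that $Z_{u-}=Z_u$ Lebesgue-a.e. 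Both arguments are sound; the integration-by-parts route is arguably more elementary for the second step and avoids a second invocation of stochastic Fubini, while the paper's double-Fubini treatment is more symmetric. Two small remarks: you decompose $Z$ into drift, continuous martingale, compensated small-jump, and large-jump pieces and apply Fubini componentwise, whereas the paper simply checks the integrability hypothesis $\int_0^t\bigl(\int_0^t|K(s-u)|^2\mathbf{1}_{u<s}\,ds\bigr)\kappa(X_u)\,du<\infty$ with $\kappa(x)=|b(x)|+|a(x)|+\int(1\wedge|\zeta|^2)\nu(x,d\zeta)$ once, which Protter's theorem requires; this is cleaner. Also, the stopping-time localization you add is unnecessary here—the integrability condition holds pathwise without it, precisely because $X\in L^p_{\rm loc}$ almost surely and $K\in L^2_{\rm loc}$.
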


\begin{proof}
The stochastic integral $\int_{[0,t)}K(t-s)dZ_s$ is well-defined for a.e.\ $t\in\R_+$ by Lemma~\ref{L_SI_welldef}. Define $\kappa(x)=|b(x)| + |a(x)| + \int_{\R^k} (1\wedge |\zeta|^2) \nu(x,d\zeta)$. The bound \eqref{eq_GB_1} and a change of variables yield
\begin{align*}
\int_0^{t}&\left(\int_0^{t} |K(s-u)|^2{\rm 1}_{\{u< s\}}d s\right) \kappa(X_u) du\\
&\leq c \int_0^{t}\left(\int_0^{t} |K(v)|^2{\rm 1}_{\{v< t-u\}}d v\right) (1+|X_u|^p) du\\
&\leq c \|K\|^2_{L^2(0,t)}(t+\|X\|^p_{L^p(0,t)})<\infty.
\end{align*}
This implies that the stochastic integral 
\[
\int_0^t \left(\int_0^{t} |K(s-u)|^2{\rm 1}_{\{u< s\}}d s\right)^{\frac12} dZ_u = \int_0^t \left(\int_0^{t-u} |K(v)|^2 d v\right)^{\frac12} dZ_u
\] 
is well defined. Two applications of the stochastic Fubini theorem in \citet[Theorem 65]{P05} along with two changes of variables yield
\begin{align*}
\int_0^t \left(\int_{[0,s)}K(s-u)dZ_u\right) ds&=\int_0^t \left(\int_0^t K(s-u){\rm 1}_{\{u<s\}} ds\right) dZ_u \\
&=\int_0^t  \left(\int_0^{t-u} K(v) dv\right) dZ_u \\
&=\int_0^t K(v)\left(\int_0^{t-v} dZ_u  \right)  dv\\
&=\int_0^t K(t-s) Z_{s} ds.
\end{align*}
This completes the proof.
\end{proof}

We can now prove the equivalence of weak $L^p$ solutions and solutions of the local martingale problem.

\begin{lemma}\label{L_SVE_iff_MGP}
Let $p \in [2, \infty)$ and consider data $(g_0,K,b,a,\nu)$ as in \ref{SVE_IC}--\ref{SVE_abnu} and a truncation function $\chi$. A pair $(X,Z)$ is a weak $L^p$ solution of \eqref{eq_SVEC} if and only if it is a solution of the local martingale problem for $(g_0,K,A)$, where $A$ is given by \eqref{eq:form1A}.
\end{lemma}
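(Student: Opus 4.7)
The proof splits into two implications.

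\emph{Forward direction (weak $L^p$ solution $\Rightarrow$ martingale problem).} Suppose $(X,Z)$ is a weak $L^p$ solution, so that $Z$ is an It\^o semimartingale with differential characteristics $(b(X), a(X), \nu(X, d\zeta))$ with respect to the truncation $\chi$. Applying It\^o's formula to $f(Z_t)$ for any $f \in C^2_c(\R^k)$ yields that
\[
f(Z_t) - f(Z_0) - \int_0^t Af(X_s, Z_s)\,ds
\]
is a local martingale; since this differs from the process in \eqref{D_MP_2} only by the constant $f(Z_0) = f(0)$, the process $M^f$ is itself a local martingale. For the convolution identity \eqref{D_MP_1}, I would integrate \eqref{eq_SVEC_def} against $ds$ over $[0,t]$ and invoke the stochastic Fubini identity of Lemma~\ref{L:Fubini} to swap the order of integration, obtaining $\int_0^t X_s\,ds = \int_0^t g_0(s)\,ds + \int_0^t K(t-s) Z_s\,ds$.

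\emph{Backward direction (martingale problem $\Rightarrow$ weak $L^p$ solution).} Here the goal is to reconstruct the semimartingale characteristics of $Z$ from the local martingale property of $M^f$. Since $X$ has trajectories in $L^p_{\rm loc}$, the growth bound \eqref{eq_GB_1} guarantees that the candidate predictable cumulants
\[
B_t = \int_0^t b(X_s)\,ds,\qquad C_t = \int_0^t a(X_s)\,ds,\qquad \nu^Z(dt,d\zeta) = \nu(X_t,d\zeta)\,dt
\]
are well-defined and locally integrable in the required sense. The fact that $M^f$ is a local martingale for every $f \in C^2_c(\R^k)$ is precisely the standard characterization (see, e.g., Jacod--Shiryaev, Theorem~II.2.42) of $Z$ being a c\`adl\`ag semimartingale with characteristics $(B,C,\nu^Z)$ relative to $\chi$; absolute continuity of $B$, $C$ and $\nu^Z$ in time then makes $Z$ an It\^o semimartingale with exactly the claimed differential characteristics. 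To pass from \eqref{D_MP_1} to \eqref{eq_SVEC_def}, Lemma~\ref{L:Fubini} (now applicable because $Z$ has been identified as an It\^o semimartingale) rewrites the right-hand side of \eqref{D_MP_1} as $\int_0^t g_0(s)\,ds + \int_0^t \big(\int_{[0,s)} K(s-u)\,dZ_u\big)\,ds$, so that
\[
\int_0^t X_s\,ds = \int_0^t g_0(s)\,ds + \int_0^t \Big(\int_{[0,r)} K(r-u)\,dZ_u\Big)\,dr,\qquad t\ge 0,\ \P\text{-a.s.}
\]
Since both sides are absolutely continuous in $t$, the Lebesgue differentiation theorem applied $\omega$-wise forces the integrands to agree $dt$-a.e., giving \eqref{eq_SVEC_def} $\P\otimes dt$-a.e.

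\emph{Main obstacle.} The principal technical difficulty is the backward identification of the characteristics of $Z$ from the martingale problem. The class $C^2_c(\R^k)$ does not contain the coordinate functions or complex exponentials, so the semimartingale characterization must be imported from general theory (or obtained by localization/approximation), and the bookkeeping around the truncation function $\chi$ must be tracked so that the compensator of the jumps of $Z$ is correctly identified as $\nu(X,d\zeta)\,dt$.
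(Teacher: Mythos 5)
Your forward direction is correct and matches the paper's argument exactly: It\^o's formula plus Lemma~\ref{L:Fubini}.

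For the backward direction, however, there is a genuine gap at the point you yourself flag as the main obstacle. You write that the local martingale property of $M^f$ for $f\in C^2_c(\R^k)$ ``is precisely the standard characterization (see, e.g., Jacod--Shiryaev, Theorem~II.2.42)'' of $Z$ having the stated characteristics. This is not quite true: the equivalence in \citet[Theorem~II.2.42~(c)$\Rightarrow$(a)]{jac_shi_03} requires the local martingale property for $f\in C^2_b(\R^k)$, i.e.\ \emph{bounded} $C^2$ functions, and the class $C^2_c(\R^k)$ used in the martingale problem is strictly smaller. The extension from $C^2_c$ to $C^2_b$ is not automatic, and it is where the real work in the paper's proof lives. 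The paper carries it out by (i) localizing via the stopping times $T_m$ (truncating $\int_0^t(1+|X_s|^p)ds$), $S_m$ (truncating $|Z|$), and $\tau_m=T_m\wedge S_m$; (ii) approximating $f\in C^2_b$ by $f\varphi_n\in C^2_c$ with cutoffs $\varphi_n$ equal to one on $B(0,n)$; and (iii) estimating the difference $M^m_t-M^{n,m}_t$ and passing to the limit in $L^1(\P)$ via dominated convergence using the growth bound~\eqref{eq_GB_1}. Without this step your proof invokes a theorem whose hypotheses have not been verified. You correctly identify the obstacle in your closing paragraph, but identifying it is not the same as resolving it --- the approximation argument is precisely what the lemma needs.

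One minor remark on the ordering: you establish the characteristics of $Z$ first and then apply Lemma~\ref{L:Fubini}; this is the logically clean order, since the hypotheses of Lemma~\ref{L:Fubini} require $Z$ to already be an It\^o semimartingale with the given differential characteristics. Your treatment of the passage from the integrated identity~\eqref{D_MP_1} to the $\P\otimes dt$-a.e.\ identity~\eqref{eq_SVEC_def} via Lebesgue differentiation is also fine.
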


\begin{proof}
Suppose first $(X,Z)$ is a weak $L^p$ solution of \eqref{eq_SVEC}. It\^o's formula applied to $Z$ shows that the process $M^f$ in \eqref{D_MP_2} is a local martingale for every $f\in C^2_c(\R^k)$; see \citet[Theorem II.2.42 (a)$\Rightarrow$(c)]{jac_shi_03}. Furthermore, integrating both sides of \eqref{eq_SVEC_def} and invoking Lemma~\ref{L:Fubini} yields \eqref{D_MP_1}. Thus $(X,Z)$ is a solution of the local martingale problem.

Conversely, suppose $(X,Z)$ is a solution of the local martingale problem for $(g_0,K,A)$. Lemma \ref{L:Fubini} and \eqref{D_MP_1} yield
\[
\int_0^T X_t dt =\int_0^T\left( g_0(t) + \int_{[0,t)}K(t-s)dZ_s\right)dt
\]
for any $T>0$. This implies \eqref{eq_SVEC_def}. It remains to check that $Z$ is a semimartingale with differential characteristics $b(X),a(X),\nu(X,d\zeta)$ with respect to $\chi$. This will follow from \citet[Theorem II.2.42 (c)$\Rightarrow$(a)]{jac_shi_03}, once we prove that $M^f$ given in \eqref{D_MP_2} is a local martingale not only for all $f\in C^2_c(\R^k)$, but for all $f\in C^2_b(\R^k)$, i.e.~bounded functions which are continuously twice differentiable. Observe that $M^f$ remains well-defined thanks to \eqref{eq_GB_1}. We adapt the proof of \citet[Proposition~3.2]{CFY05}. Consider the stopping times
	\begin{align*}
	T_m &= \inf \{t \geq 0\colon \int_0^t (1+|X_s|^p)ds \geq m \},\\
	S_m &= \inf \{t \geq 0\colon |Z_{t-}| \geq m \mbox{ or }  |Z_{t}| \geq m\},\\
	\tau_m &= T_m\wedge S_m,
	\end{align*}
for $m \geq 1$. It is clear that $\tau_m\to\infty$ as $m\to\infty$. Fix any function $f\in C_b^2(\R^k)$. Fix also functions $\varphi_n\in C^2_c(\R^k)$ taking values in $[0,1]$ and equal to one on the centered ball $B(0,n)$ of radius $n$. Then $f\varphi_n \in C^2_c(\R^k)$, so that $M^{f\varphi_n}$ defined as in \eqref{D_MP_2} is a local martingale for each $n$. Write $M^{n,m}_t=M^{f\varphi_n}_{t\wedge \tau_m}$. We then have for $n,m \in \N$ 
\begin{align*}
|M^{n,m}_t| \leq  \|f\|_{\infty} + m c_n, \quad t \geq 0,  
\end{align*}  
where the constant $c_n$ comes from \eqref{eq_GB_1} and depends on $n$. Hence, $M^{n,m}$ is a true martingale for each $m,n\in\N$. 
Fix $m\in\N$ and set $M^m_t=M^f_{t\wedge\tau_m}$. For all $n > m$, by  definition of $T_m$ and the fact that $\varphi_n=1$ on $B(0,n)$ we have 
\begin{align*}
M^m_t -M^{n,m}_t &=    \int_{(0,t\wedge \tau_m]\times\R^k}  \left(f(Z_s + \zeta ) -(f\varphi_n)(Z_s + \zeta )\right) \nu(X_s,d\zeta)ds.
\end{align*}
Thus
\begin{align*}
|M^m_t -M^{n,m}_t |
&\leq     \|f\|_{\infty} \int_{(0,t\wedge \tau_m]\times\R^k}  \bm 1_{|\zeta| \geq n-m} \nu(X_s,d\zeta) ds.
\end{align*}
As $n \to \infty$, the right--hand side  tends to zero  in $L^1(\P)$, by virtue of the dominated convergence theorem. Indeed,  $1\wedge|\zeta|^2\ge \bm 1_{|\zeta| \geq n-m}\to0$ as $n\to\infty$, and it follows from  \eqref{eq_GB_1}  that 
\[
\int_{(0,t\wedge \tau_m]\times\R^d}  (1\wedge|\zeta|^2) \nu(X_s,d\zeta) ds \leq c  \int_{0}^{t\wedge \tau_m}  (1+|X_s|^p) ds \leq cm.
\]
We conclude that $\E[|M_t^{n,m} - M_t^m|]\to 0$ as $n\to \infty$. Thus $M^m_t=M^f_{t\wedge\tau_m}$ is a martingale being an $L^1(\P)$-limit of martingales. Thus $M^f$ is a local martingale, as required.
\end{proof}

The following is our main result on stability for solutions of local martingale problems. Together with Lemma~\ref{L_SVE_iff_MGP}, it will imply Theorem~\ref{T_stability}. We let $\bm x=(\bm x(t))_{t\ge0}$ and $\bm z=(\bm z(t))_{t\ge0}$ denote generic elements of $L^p_{\rm loc}$ and $D$, respectively.

\begin{theorem}\label{T_stability_MGP}
Let $d,k\in\N$, $p\in(1,\infty)$. Consider data $(g_0^n,K^n,A^n)$ for $n\in\N$ and $(g_0,K,A)$, and assume that the $A^n$ satisfy \eqref{lingrowth_MGP} with constants $c_f$ that do not depend on $n$, and that $A^nf$ is continuous for every $f \in C^2_c(\R^k)$. For each $n$, let $(X^n,Z^n)$ be a solution of the local martingale problem for $(g_0^n,K^n,A^n)$. Assume that:
\begin{itemize}
\item $g_0^n\to g_0$ in $L^p_{\rm loc}$,
\item $K^n\to K$ in $L^p_{\rm loc}$,
\item $A^nf\to Af$ locally uniformly on {$\mathbb{R}^d \times \mathbb{R}^k$} for every $f\in C^2_c(\R^k)$,
\item $(X^n,Z^n)\Rightarrow(X,Z)$ in $L^p_{\rm loc}\times D$ for some limiting process $(X,Z)$.
\end{itemize}
Then $(X,Z)$ is a solution of the local martingale problem for $(g_0,K,A)$.
\end{theorem}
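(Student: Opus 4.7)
My plan is to invoke the Skorokhod representation theorem so that $(X^n,Z^n)\to(X,Z)$ almost surely in $L^p_{\rm loc}\times D$, and then pass to the limit in the two defining properties of a solution of the local martingale problem: the convolution identity \eqref{D_MP_1} and the local martingale property of $M^f$ for each $f\in C^2_c(\R^k)$. Modulo an almost sure modification, predictability of $X$ and adaptedness of $Z$ with $Z_0=0$ are inherited from the prelimit.

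First I would verify \eqref{D_MP_1} by sending $n\to\infty$ in
\[
\int_0^t X^n_s\,ds=\int_0^t g_0^n(s)\,ds+\int_0^t K^n(t-s)Z^n_s\,ds
\]
for each fixed $t$. The first two terms converge in $L^1(0,t)$ directly from the assumed $L^p_{\rm loc}$-convergence of $g_0^n$ and $X^n$. For the convolution I split the difference into $(K^n-K)(t-\fdot)Z^n_\fdot$ and $K(t-\fdot)(Z^n_\fdot-Z_\fdot)$. Skorokhod convergence yields $\sup_n\sup_{s\le t}|Z^n_s|<\infty$ and $Z^n_s\to Z_s$ at all continuity points of $Z$, a full-measure set; dominated convergence then gives $Z^n\to Z$ in $L^q(0,t)$ for every $q<\infty$, and Hölder's inequality with $q=p/(p-1)$ closes both pieces.

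For the martingale property, fix $f\in C^2_c(\R^k)$. The common linear-growth bound $|A^n f(x,z)|\le c_f(1+|x|^p)$ from \eqref{lingrowth_MGP} lets me localize via
\[
\tau_m^n=\inf\Bigl\{t\ge0:\int_0^t(1+|X^n_s|^p)\,ds\ge m\Bigr\},\qquad \tau_m=\inf\Bigl\{t\ge0:\int_0^t(1+|X_s|^p)\,ds\ge m\Bigr\},
\]
so that $M^{n,f}_{\cdot\wedge\tau_m^n}$ is bounded by $\|f\|_\infty+c_f m$ and thus a genuine bounded martingale. Since $\tau_m\uparrow\infty$ by $X\in L^p_{\rm loc}$, it suffices to show that $M^f_{\cdot\wedge\tau_m}$ is a martingale in the filtration generated by $(X,Z)$, which follows from the identity
\[
\E\Bigl[\bigl(M^f_{t\wedge\tau_m}-M^f_{r\wedge\tau_m}\bigr)H\bigl(X|_{[0,r]},Z|_{[0,r]}\bigr)\Bigr]=0
\]
for all $r<t$ and bounded continuous $H$ on $L^p(0,r)\times D([0,r])$, obtained by taking $n\to\infty$ in the corresponding identity that holds for each prelimit; a standard monotone-class argument upgrades this to the martingale property.

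The core of the argument, and what I expect to be the main obstacle, is the convergence of the integral term $\int_0^{t\wedge\tau_m^n}A^n f(X^n_s,Z^n_s)\,ds$. Along a subsequence, the $L^p$-convergence $X^n\to X$ yields $X^n_s\to X_s$ for a.e.\ $s$, while Skorokhod convergence yields $Z^n_s\to Z_s$ at continuity points; local uniform convergence $A^n f\to Af$ (which also forces continuity of the limit $Af$) then gives $A^n f(X^n_s,Z^n_s)\to Af(X_s,Z_s)$ almost everywhere in $s$. The pointwise bound $|A^n f(X^n_s,Z^n_s)|\le c_f(1+|X^n_s|^p)$ together with $L^1(0,t)$-convergence $|X^n|^p\to|X|^p$ provides the uniform integrability required by Vitali's theorem, yielding convergence of the integrals. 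The residual ambiguity in $\tau_m^n\to\tau_m$ is handled by restricting $m$ to a countable set of regular values of $t\mapsto\int_0^t(1+|X_s|^p)\,ds$, which still produces a localizing sequence. Reconciling the merely $L^p$ regularity of the $X^n$ with the Skorokhod-topology convergence of the $Z^n$ and the only locally-uniform convergence of the generators is the technical heart of the proof.
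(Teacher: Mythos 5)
Your overall strategy coincides with the paper's: Skorokhod representation, localization by stopping times that control $\int_0^t(1+|X^n_s|^p)\,ds$, and passage to the limit in the martingale identity using the uniform linear-growth bound, combined with Young/H\"older convergence arguments for the convolution identity \eqref{D_MP_1}. However, there is a genuine gap in the localization step that your ``regular values'' trick does not fix. To pass to the limit in the term $f(Z^n_{t\wedge\tau_m^n})$, you need $Z^n_{t\wedge\tau_m^n}\to Z_{t\wedge\tau_m}$, which under Skorokhod convergence requires $Z$ to be almost surely continuous at the random time $t\wedge\tau_m$. The convergence $\tau_m^n\to\tau_m$ that you worry about is actually automatic: the integrand $1+|X_s|^p\ge 1$ makes $\phi(t)=\int_0^t(1+|X_s|^p)\,ds$ strictly increasing with $\phi'\ge1$, so $\phi^{-1}$ is Lipschitz and $\tau_m^n=\phi_n^{-1}(m)\to\phi^{-1}(m)=\tau_m$ for \emph{every} $m$; restricting to ``regular values'' of $\phi$ buys you nothing. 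The real obstruction is that $\tau_m$, being constructed from $X$, is not a priori independent of the jump set of $Z$, so nothing prevents $\P(\Delta Z_{\tau_m}\ne0)>0$.

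The paper handles this by introducing an auxiliary uniform random variable $U^n$ independent of $(X^n,Z^n)$ and defining $\tau(u,\bm x)=\inf\{t:\int_0^t(u+1+|\bm x(s)|^p)\,ds\ge m\}$; for each fixed path $\bm x$ the law of $\tau(U,\bm x)$ is atomless, and a Fubini argument over the countable jump times $\{T_i(Z)\}$ yields $\P(\Delta Z_{\tau(U,X)}\ne0)=0$. A randomization-free alternative (closer in spirit to what you attempt) would be to observe that $m\mapsto\tau_m(\omega)$ is a strictly increasing bijection, so for each $\omega$ the set $\{m:\tau_m(\omega)\in\{T_i(Z(\omega))\}\}$ is countable, and by Fubini $\P(\Delta Z_{\tau_m}\ne0)=0$ for Lebesgue-almost-every $m$, from which one can extract a localizing sequence $m_k\uparrow\infty$. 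But that null set of bad $m$ depends on the joint law of $(X,Z)$, not on the level-set structure of $\phi$, so your characterization is not the right one. Separately, your remark that predictability of $X$ is ``inherited modulo an almost sure modification'' glosses over a real step: one must exhibit a predictable $\P\otimes dt$-modification, which the paper does explicitly by replacing $X$ with $\liminf_{h\downarrow0}\frac1h\int_{(t-h)\vee0}^t X_s\,ds$.
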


\begin{proof}
Let $(\Omega^n,\Fcal^n,(\Fcal^n_t)_{t\ge0},\P^n)$ be the filtered probability space where $(X^n,Z^n)$ is defined. We may assume without loss of generality that this space supports an $\Fcal^n_0$-measurable standard uniform random variable $U^n$ that is independent of $(X^n,Z^n)$. We then have $(U^n,X^n,Z^n)\Rightarrow(U,X,Z)$ in $[0,1]\times L^p_{\rm loc}\times D$, where $U$ is standard uniform and independent of $(X,Z)$. The standard uniform random variable $U$ will be used below as a randomization device to avoid the jumps of $Z$.

Fix $f\in C^2_c(\R^k)$ and $m\in\N$. For any $(u,\bm x)\in[0,1]\times L^p_{\rm loc}$, define
\[
\tau(u,\bm x)=\inf\{t\ge0\colon \int_0^t(u+1+|\bm x(s)|^p ) ds \ge m\}.
\]
Then $\tau(U^n,X^n)$ is a stopping time in $(\Fcal^n_t)_{t\ge0}$, and the growth bound \eqref{lingrowth_MGP} yields
\[
\int_0^{t\wedge\tau(U^n,X^n)} |A^nf(X^n_s,Z^n_s)|ds \le c_f \int_0^{t\wedge\tau(U^n,X^n)} (1+|X^n_s|^p)ds \le m c_f.
\]
Thus the local martingale
\[
M^n_t = f(Z^n_{t\wedge\tau(U^n,X^n)}) - \int_0^{t\wedge\tau(U^n,X^n)} A^nf(X^n_s,Z^n_s)ds, \quad t\ge0,
\]
satisfies
\begin{equation}\label{stability_pf2_new}
|M^n_t| \le \|f\|_\infty + m c_f, \quad t\ge0.
\end{equation}
In particular it is a true martingale, so for any time points $0\le t_1 < \cdots < t_k \le s < t$, and functions $h \in C([0,1])$ and $g_i\in C_b(\R^d\times\R^d)$, $i=1,\ldots,k$, we have
\begin{equation}\label{stability_pf1_new}
\E\left[ (M^n_t-M^n_s) h(U^n) \prod_{i=1}^k g_i\left(\int_0^{t_i} X^n_r dr, Z^n_{t_i}\right) \right] = 0,
\end{equation}
where $\E$ is understood as expectation under $\P^n$.

Next, by Skorokhod's representation theorem (see \citet[Theorem~6.7]{B:99}), we may assume that all the triplets $(U^n,X^n,Z^n)$ and $(U,X,Z)$ are defined on a common probability space $(\Omega,\Fcal,\P)$, that $(U^n,X^n,Z^n)\to(U,X,Z)$ in $[0,1]\times L^p_{\rm loc}\times D$ almost surely, and that each triplet has the same law under $\P$ as it did under $\P^n$.\footnote{We can however {\em not} assume that the filtrations $(\Fcal^n_t)_{t\ge0}$ are the same.} In particular, \eqref{stability_pf1_new} still holds, now with $\E$ understood as expectation under $\P$.

We now prepare to pass to the limit in \eqref{stability_pf1_new}. One easily checks that the map $(u,\bm x)\mapsto\tau(u,\bm x)$ is continuous. Combined with Lemma~\ref{L_conv_int} below, it follows that
\[
\int_0^{t\wedge\tau(U^n,X^n)} A^nf(X^n_r,Z^n_r)dr \to \int_0^{t\wedge\tau(U,X)} Af(X_r,Z_r)dr
\]
almost surely for any $t\ge0$. Moreover, $Z$ is continuous at $\tau(U,X)$, almost surely. To see this, let $\{T_i(\bm z)\colon i\in\N\}$ denote an enumeration of the countably many jump times of the function $\bm z\in D$. We choose $T_i(\bm z)$ measurable in $\bm z$. Since $U$ and $(X,Z)$ are independent, and since for any $\bm x\in L^p_{\rm loc}$ the law of $\tau(U,\bm x)$ has no atoms, we get
\[
\P(\tau(U,X)=T_i(Z)) = \E\left[ \P(\tau(U,\bm x) = T_i(\bm z))|_{(\bm x,\bm z)=(X,Z)}\right] = 0.
\]
Thus
\[
\P(\tau(U,X)\in \{T_i(\bm z)\colon i\in\N\}) \le \sum_{i\in\N} \P(\tau(U,X)=T_i(Z)) = 0,
\]
showing that $Z$ is indeed continuous at $\tau(U,X)$, almost surely. We conclude that
\[
M^n_t \to M_t
\]
almost surely for any $t\in\Ccal(Z)=\{r\in\R_+\colon\P(Z_r=Z_{r-})=1\}$, where we define
\[
M_t = f(Z_{t\wedge\tau(U,X)}) - \int_0^{t\wedge\tau(U,X)} Af(X_s,Z_s)ds, \quad t\ge0.
\]
Selecting $0\le t_1<\ldots<t_k\le s<t$ from $\Ccal(Z)$, we may thus use the bounded convergence theorem, justified by \eqref{stability_pf2_new}, to pass to the limit in \eqref{stability_pf1_new} to obtain
\begin{equation}\label{stability_pf3_new}
\E\left[ (M_t-M_s) h(U) \prod_{i=1}^k g_i\left(\int_0^{t_i} X_r dr, Z_{t_i}\right) \right] = 0.
\end{equation}
By \citet[Theorem~3.7.7]{EK:05}, $\Ccal(Z)$ is dense in $\R_+$. Along with right-continuity of $M$ and $Z$, this implies that \eqref{stability_pf3_new} actually holds for any choice of times points $0\le t_1<\ldots<t_k\le s<t$. Thus $M$ is a martingale with respect to the filtration given by
\[
\Fcal_t=\sigma(U)\vee\sigma(\int_0^s X_r dr,Z_s\colon s\le t), \quad t\ge0.
\]
Since $\tau(U,X)$ is a stopping time for this filtration, and since the constant $m$ in the definition of $\tau(U,X)$ was arbitrary, the process $M^f$ in \eqref{D_MP_2} is a local martingale.

We must also verify \eqref{D_MP_1}. This is immediate from $L^p$ convergence of $g_0^n$ and $X^n$ as well as Lemma~\ref{L_conv_K} below. This lets us pass to the limit in the identity $\int_0^t X^n_sds = \int_0^t g_0^n(s)ds + \int_0^t K^n(t-s)Z^n_sds$, which is valid by assumption. 

It only remains to ensure that $Z$ is adapted and $X$ is predictable. Adaptedness of $Z$ holds by definition of the filtration. It is however not clear that $X$ is predictable. Therefore, we replace $X$ by the process $\widetilde X=\liminf_{h\downarrow0} \widetilde X^h$, where for each $h>0$ we define
\[
\widetilde X^h_t = \frac1h \int_{(t-h)\vee 0}^t X_s ds, \quad t\ge0.
\]
Note that $\widetilde X$ is predictable, being the pointwise liminf of the continuous and adapted processes $\widetilde X^h$. Moreover, for every fixed $\omega$, the trajectory $\widetilde X(\omega)$ coincides with $X(\omega)$ almost everywhere by Lebesgue's differentiation theorem. Replacing $X$ by $\widetilde X$ therefore does not affect either \eqref{D_MP_1} or the local martingale property in \eqref{D_MP_2}.
\end{proof}

The following two lemmas were used in the proof of Theorem~\ref{T_stability_MGP}. The first one uses the convolution notation $(f*g)(t)=\int_0^t f(t-s)g(s)ds$.

\begin{lemma}\label{L_conv_K}
Fix $p\in(1,\infty)$. If $K^n\to K$ in $L^p_{\rm loc}(\R_+,\R^{d\times k})$ and $\bm z_n\to\bm z$ in $D$, then $K^n*\bm z_n\to K*\bm z$ locally uniformly.
\end{lemma}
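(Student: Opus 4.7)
The plan is to fix $T>0$ and establish uniform convergence of $K^n*\bm z_n$ to $K*\bm z$ on $[0,T]$ via the decomposition
\[
(K^n*\bm z_n)(t)-(K*\bm z)(t) = \big((K^n-K)*\bm z_n\big)(t)+\big(K*(\bm z_n-\bm z)\big)(t),
\]
and then bound each term separately using Hölder's inequality in convolution form. The two inputs I will use are (i) the fact that convergence in $D$ implies local uniform boundedness, i.e.\ $M:=\sup_n\sup_{s\in[0,T]}|\bm z_n(s)|<\infty$, and (ii) that $\bm z_n(s)\to \bm z(s)$ at every continuity point $s$ of $\bm z$, which constitutes a set of full Lebesgue measure on $[0,T]$ since a càdlàg function has at most countably many discontinuities.

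For the first term, I would simply use the uniform bound $M$, change variables, and apply Hölder's inequality with exponents $p$ and $p/(p-1)$ against the constant function $1$:
\[
\sup_{t\in[0,T]}\big|\big((K^n-K)*\bm z_n\big)(t)\big| \le M\int_0^T |K^n(u)-K(u)|\,du \le M\,T^{1-1/p}\,\|K^n-K\|_{L^p(0,T)},
\]
which tends to $0$ by hypothesis.

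For the second term, I would set $w_n=\bm z_n-\bm z$ and use Hölder with conjugate exponents $p$ and $p'=p/(p-1)$ to obtain, uniformly in $t\in[0,T]$,
\[
\big|\big(K*w_n\big)(t)\big| \le \Big(\int_0^t|K(t-s)|^p\,ds\Big)^{1/p}\Big(\int_0^t|w_n(s)|^{p'}\,ds\Big)^{1/p'} \le \|K\|_{L^p(0,T)}\,\|w_n\|_{L^{p'}(0,T)}.
\]
Here the finiteness of $p'$, which requires $p>1$, is essential. Since $w_n\to 0$ pointwise almost everywhere on $[0,T]$ by input (ii) and $|w_n|\le M+\sup_{[0,T]}|\bm z|$ by input (i), the dominated convergence theorem gives $\|w_n\|_{L^{p'}(0,T)}\to 0$, completing the argument.

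The main conceptual step, and the only nontrivial point, is extracting pointwise a.e.\ convergence and a uniform bound from Skorokhod convergence in $D$; once these standard facts are invoked, the estimate reduces to two applications of Hölder's inequality together with the dominated convergence theorem. The condition $p>1$ enters critically in order that the conjugate exponent $p'$ be finite, which is what allows a.e.\ convergence plus boundedness to be upgraded to $L^{p'}$ convergence.
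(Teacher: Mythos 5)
Your proof is correct and takes essentially the same route as the paper's: the same triangle-inequality decomposition into $(K^n-K)*\bm z_n$ and $K*(\bm z_n-\bm z)$, followed by a convolution Hölder/Young estimate on each term, using local uniform boundedness and almost-everywhere pointwise convergence of $\bm z_n$ (both consequences of Skorokhod convergence) together with dominated convergence to conclude. The only cosmetic difference is that the paper invokes its Lemma~\ref{L_Young} (Young's inequality with $r=\infty$) to bound both terms in $L^\infty$, whereas you pull out the sup bound $M$ for the first term before applying Hölder, and the paper additionally notes that $K*\bm z$ and $K^n*\bm z_n$ are continuous so the $L^\infty$ norm equals the sup norm, a point you sidestep by working with pointwise bounds uniform in $t$ from the start.
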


\begin{proof}
Fix any $T\in\R_+$ and let $q\in(1,\infty)$ satisfy $p^{-1}+q^{-1}=1$. The triangle inequality and Young's inequality, see Lemma~\ref{L_Young} with $r=\infty$, give
\begin{align*}
\| K*\bm z &- K^n*\bm z_n\|_{L^\infty(0,T)}\\
&\le\| K*(\bm z-\bm z_n)\|_{L^\infty(0,T)} + \| (K- K^n)*\bm z_n\|_{L^\infty(0,T)} \\
&\le\|K\|_{L^p(0,T)}\|\bm z-\bm z_n\|_{L^q(0,T)} + \|K- K^n\|_{L^p(0,T)}\|\bm z_n\|_{L^q(0,T)}.
\end{align*}
Since $\bm z_n\to\bm z$ in $D$, we have $\sup_n\|\bm z-\bm z_n\|_{L^\infty(0,T)}<\infty$ and $\bm z_n(t)\to\bm z(t)$ for almost every $t\in[0,T]$. Hence $\bm z_n\to\bm z$ in $L^q(0,T)$ by the dominated convergence theorem. Since $K*\bm z$ and $K^n*\bm z_n$ are continuous functions due to \citet[Corollary~2.2.3]{GLS:90}, the $L^\infty(0,T)$ norm coincides with the supremum norm on $[0,T]$. The result follows.
\end{proof}

\begin{lemma}\label{L_conv_int}
Fix $d,k\in\N$, $p\in[1,\infty)$. Let $g_n\colon\R^d\times\R^k\to\R$ be continuous functions satisfying the following polynomial growth condition: For every compact subset $Q\subset\R^k$, there exists a constant $c_Q\in\R_+$ such that
\begin{equation}\label{L_cont_lingrowth}
|g_n(x,z)| \le c_Q(1+|x|^p), \quad (n,x,z)\in\N\times\R^d\times Q.
\end{equation}
Assume that $g_n\to g$ locally uniformly for some function $g\colon\R^d\times\R^k\to\R$. Then, whenever $(\bm x_n,\bm z_n)\to(\bm x,\bm z)$ in $L^p_{\rm loc}\times D$, we have
\[
\int_0^t g_n(\bm x_n(s),\bm z_n(s)) ds \to \int_0^t g(\bm x(s),\bm z(s)) ds
\]
locally uniformly in $t\in\R_+$.
\end{lemma}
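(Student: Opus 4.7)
Observe that $\sup_{t\le T}\bigl|\int_0^t \varphi(s)\,ds\bigr| \le \|\varphi\|_{L^1(0,T)}$, so it suffices to show $\|g_n(\bm x_n,\bm z_n) - g(\bm x,\bm z)\|_{L^1(0,T)}\to 0$ for every $T>0$. Decompose
\[
g_n(\bm x_n,\bm z_n) - g(\bm x,\bm z) = \underbrace{(g_n-g)(\bm x_n,\bm z_n)}_{\mathrm{(I)}} + \underbrace{g(\bm x_n,\bm z_n) - g(\bm x,\bm z)}_{\mathrm{(II)}}
\]
and treat the two pieces separately. Fix $T>0$. Skorokhod convergence $\bm z_n\to \bm z$ in $D$ provides a compact $Q\subset\R^k$ containing all trajectory values $\{\bm z_n(s),\bm z(s)\colon s\in[0,T],\ n\in\N\}$. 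The $L^p$ convergence $\bm x_n\to\bm x$ on $(0,T)$ entails that $\{|\bm x_n|^p\}$ is uniformly integrable on $(0,T)$, via the bound $|\bm x_n|^p \le 2^{p-1}(|\bm x|^p + |\bm x_n-\bm x|^p)$. Moreover, $g$ is continuous as a locally uniform limit of continuous functions.

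For (I): given $\varepsilon>0$, uniform integrability of $\{|\bm x_n|^p\}$ yields $R$ such that $\sup_n \int_0^T c_Q(1+|\bm x_n|^p)\bm 1_{\{|\bm x_n|>R\}}\,ds < \varepsilon$. By hypothesis $g_n\to g$ uniformly on the compact set $\{|x|\le R\}\times Q$, so there is $N$ with $|g_n(x,z)-g(x,z)|<\varepsilon/T$ on this set for $n\ge N$. Splitting the integrand according to whether $|\bm x_n(s)|\le R$ or not, and using the crude bound $|g_n-g|\le 2c_Q(1+|\bm x_n|^p)$ on the complement, gives $\|\mathrm{(I)}\|_{L^1(0,T)}\le 3\varepsilon$ for $n\ge N$.

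For (II): use the subsequence principle. Any subsequence of $(n)$ admits a further subsequence $(n_{k_j})$ along which $\bm x_{n_{k_j}}(s)\to \bm x(s)$ for a.e.\ $s\in(0,T)$; Skorokhod convergence already forces $\bm z_{n_{k_j}}(s)\to \bm z(s)$ at every continuity point of $\bm z$, a co-countable set. Continuity of $g$ then yields $g(\bm x_{n_{k_j}}(s),\bm z_{n_{k_j}}(s))\to g(\bm x(s),\bm z(s))$ almost everywhere on $(0,T)$. Since $|g(\bm x_n,\bm z_n)|\le c_Q(1+|\bm x_n|^p)$ and $\{|\bm x_n|^p\}$ is uniformly integrable, so is $\{g(\bm x_n,\bm z_n)\}$; hence Vitali's theorem yields $L^1(0,T)$ convergence along $(n_{k_j})$. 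Because every subsequence admits such a further subsequence with the same limit $g(\bm x,\bm z)$, the full sequence converges in $L^1(0,T)$. The mild obstacle is the absence of pointwise control on $\bm x_n$, which is handled precisely by this subsequence--Vitali combination.
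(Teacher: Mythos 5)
Your proof is correct, and the treatment of the term $\mathrm{(II)} = g(\bm x_n,\bm z_n) - g(\bm x,\bm z)$ is genuinely different from the paper's. The paper further splits this into three pieces: a piece where $|\bm x_n|\vee|\bm x|\le R$, handled by a modulus-of-continuity estimate for $g$ on a compact set combined with Jensen's inequality; a piece where $|\bm x_n|\vee|\bm x|>R$, handled by uniform integrability; and a piece $g(\bm x,\bm z_n)-g(\bm x,\bm z)$, handled by dominated convergence. It then sends $n\to\infty$ and $R\to\infty$ in sequence. Your replacement --- extract an a.e.\ convergent subsubsequence from $\bm x_n\to\bm x$ in $L^p(0,T)$, note that Skorokhod convergence gives $\bm z_n(s)\to\bm z(s)$ off a countable set, apply continuity of $g$ pointwise, then use uniform integrability of $\{g(\bm x_n,\bm z_n)\}$ (inherited from the uniform growth bound and $L^p$-convergence of $\bm x_n$) to invoke Vitali, and close with the subsequence principle --- is shorter and avoids the modulus-of-continuity construction entirely. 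Both routes exploit exactly the same ingredients (continuity of $g$, the uniform polynomial growth, uniform integrability of $\{|\bm x_n|^p\}$, and a compact set $Q$ absorbing the $\bm z$-trajectories); yours packages them through measure-theoretic machinery that a probabilist would likely find more natural, while the paper's version has the mild advantage of being quantitatively explicit at every stage. Your handling of $\mathrm{(I)}$ is essentially identical to the paper's terms ${\bf I}_n$ and ${\bf II}_n$.
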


\begin{proof}
Suppose $(\bm x_n,\bm z_n)\to(\bm x,\bm z)$ in $L^p_{\rm loc}\times D$. Fix $T\in\R_+$, let $Q\subset\R^d$ be a compact set that contains the values attained by $\bm z$ and $\bm z_n$, $n\in\N$, over $[0,T]$, and let $c_Q$ be the associated constant in \eqref{L_cont_lingrowth}. Let $R\in[1,\infty)$ be an arbitrary constant, and write
\begin{align*}
\int_0^T &|g_n(\bm x_n(s),\bm z_n(s))- g(\bm x(s),\bm z(s))| ds \\
&\le \int_0^T |g_n(\bm x_n(s),\bm z_n(s))- g(\bm x_n(s),\bm z_n(s))|\bm1_{|\bm x_n(s)|\le R} \,ds\\
&\quad +\int_0^T |g_n(\bm x_n(s),\bm z_n(s))- g(\bm x_n(s),\bm z_n(s))|\bm1_{|\bm x_n(s)|> R} \,ds \\
&\quad +\int_0^T |g(\bm x_n(s),\bm z_n(s))- g(\bm x(s),\bm z_n(s))|\bm1_{|\bm x_n(s)|\vee|\bm x(s)| \le R} \,ds \\
&\quad +\int_0^T |g(\bm x_n(s),\bm z_n(s))- g(\bm x(s),\bm z_n(s))|\bm1_{|\bm x_n(s)|\vee|\bm x(s)| > R} \,ds \\
&\quad + \int_0^T |g(\bm x(s),\bm z_n(s))- g(\bm x(s),\bm z(s))|ds \\
&= {\bf I}_n + {\bf II}_n + {\bf III}_n + {\bf IV}_n + {\bf V}_n.
\end{align*}

We bound these terms individually. First, defining the compact set $Q_R= \overline{B(0,R)} \times  Q$, where $\overline{B(0,R)}=\{x\in\R^d\colon|x|\le R\}$ is the centered closed ball of radius $R$, we have
\[
{\bf I}_n \le T  \sup_{(x,z)\in Q_R}|g_n(x,z)-g(x,z)| \to 0 \quad (n\to\infty).
\]

Next, consider the restrictions $\bm x_n|_{[0,T]}$, again denoted by $\bm x_n$ for simplicity; they are convergent in $L^p(0,T)$. The Vitali convergence theorem implies that $\{|\bm x_n|^p\colon n\in\N\}$ is uniformly integrable. Since $g$ satisfies the same polynomial growth condition \eqref{L_cont_lingrowth} as the $g_n$ and since $R \geq 1$, we then get
\[
{\bf II}_n \le 4c_Q \int_0^T |\bm x_n(s)|^p\bm1_{|\bm x_n(s)| > R} \,ds \le \varphi_{\bf II}(R^p),
\]
where $\varphi_{\bf II}(R^p)=4c_Q \sup_n\int_0^T |\bm x_n(s)|^p\bm1_{|\bm x_n(s)|^p > R^p} \,ds$ converges to zero as $R\to\infty$ by the definition of uniform integrability. In a similar manner, we get
\[
{\bf IV}_n \le 4c_Q \int_0^T (|\bm x_n(s)|\vee |\bm x(s)|)^p\bm1_{|\bm x_n(s)|\vee|\bm x(s)| > R} \,ds \le \varphi_{\bf IV}(R^p),
\]
where $\varphi_{\bf IV}(R^p)=4c_Q \sup_n\int_0^T (|\bm x_n(s)|\vee |\bm x(s)|)^p\bm1_{|\bm x_n(s)|\vee|\bm x(s)| > R} \,ds$ also converges to zero as $R\to\infty$.

We now turn to ${\bf III}_n$. Let $\omega_R\colon\R_+\to\R_+$ be a continuous strictly increasing concave function with $\omega_R(0)=0$ such that
\[
\sup_{z\in Q}|g(x,z)-g(y,z)| \le \omega_R(|x-y|), \quad x,y \in \overline{B(0,R)}.
\]
Such a function exists because $g$ is uniformly continuous on the compact set $Q_R$. Its inverse $\omega_R^{-1}$ exists and is convex, so by using Jensen's inequality we get
\begin{align*}
{\bf III}_n &\le \int_0^T \omega_R(|\bm x_n(s)-\bm x(s)|)\,ds \\
&=T\, \omega_R \circ \omega_R^{-1}\left( \int_0^T \omega_R(|\bm x_n(s)-\bm x(s)|)\,\frac{ds}{T} \right) \\
&\le T\, \omega_R\left( \int_0^T |\bm x_n(s)-\bm x(s)|\,\frac{ds}{T} \right) \\
&\to 0 \quad (n\to\infty).
\end{align*}

Finally, consider ${\bf V}_n$. Since $\bm z_n\to\bm z$ in $D$, we have $\bm z_n(s)\to\bm z(s)$ for almost every $s\in\R_+$. Thus the integrand in ${\bf V}_n$ converges to zero for almost every $s\in\R_+$. Moreover, the polynomial growth condition \eqref{L_cont_lingrowth} implies that the integrand is bounded by $2c_Q(1+|\bm x(s)|^p)$, which has finite $L^1([0,T],\R^d)$-norm. The dominated convergence theorem now shows that ${\bf V}_n\to0$ as $n\to\infty$.

Combining the above bounds, we obtain
\[
\limsup_{n\to\infty} \int_0^T |g_n(\bm x_n(s),\bm z_n(s))- g(\bm x(s),\bm z(s))| ds \le \varphi_{\bf II}(R^p) + \varphi_{\bf IV}(R^p).
\]
Sending $R$ to infinity shows that the left-hand side is actually equal to zero. This completes the proof.
\end{proof}

The proof of Theorem~\ref{T_stability} is now straightforward.

\begin{proof}[Proof of Theorem~\ref{T_stability}]
This is a consequence of Lemma~\ref{L_SVE_iff_MGP} and Theorem~\ref{T_stability_MGP}. We only need to observe that the ``truncation function'' $\chi(\zeta)=\zeta$ can be used under the stronger integrability condition \eqref{eq_LG}, and that the $A^n$ satisfy \eqref{lingrowth_MGP} with constants $c_f$ that do not depend on $n$. To see this, observe that any $f\in C^2_c(\R^k)$ satisfies
\begin{align}\label{eq_MGP_pf_existence_3}
|f(z+\zeta)-f(z)-\zeta^\top\nabla f(z)| \le  \frac12 \|\nabla^2f\|_\infty |\zeta|^2.
\end{align}
Therefore,
\begin{align*}
|A^n f(x,z)| &\le  \left( \|\nabla f\|_\infty + \frac12\|\nabla^2f\|_\infty \right)\\
&\quad\times\left( |b^n(x)| +  |a^n(x)| + \int_{\R^k} |\zeta|^2 \nu^n(x,d\zeta) \right).
\end{align*}
Since $(b^n,a^n,\nu^n)$ satisfy \eqref{eq_LG} with a common constant $c_{\rm LG}$, and due to the bounds $|b^n(x)| \le 1+|b^n(x)|^2$ and $|x|^2\le1+|x|^p$, we deduce that $|A^n f(x,z)| \le c_f (1+|x|^p)$ holds with
\begin{align}\label{eq_constant_cf} 
c_f =  2 (1+ c_{\rm LG})\left( \|\nabla f\|_\infty + \frac12 \|\nabla^2f\|_\infty \right).
\end{align}
This does not depend on $n$, as required. The proof is complete.
\end{proof}

\section{Existence of weak $L^p$ solutions}\label{S_pf_Tex1}

This section is devoted to the proof of Theorem~\ref{T_weak_existence}. We first give an elementary existence result for the simple pure jump case where the diffusion part of the characteristic triplet vanishes, and the jump kernel is uniformly bounded.

\begin{lemma}\label{L_basic_existence}
Let $K\colon\R_+\to\R^{d\times k}$ and $g_0\colon\R_+\to\R^d$ be measurable functions. Let $\nu(x,d\zeta)$ be a bounded kernel from $\R^d$ into $\R^k$, meaning that $\sup_{x\in\R^d}\nu(x,\R^k)<\infty$. Then there exists a filtered probability space with a predictable process $X$ and a c\`adl\`ag piecewise constant semimartingale $Z$ such that
\[
X_t = g_0(t) + \int_{[0,t)} K(t-s) dZ_s, \quad t\ge0,
\]
and the differential characteristics of $Z$ are $b(X)=\int_{\R^k}\zeta\nu(X,d\zeta)$, $a(X)=0$, $\nu(X,d\zeta)$.
\end{lemma}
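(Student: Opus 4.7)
The plan is to build $(X,Z)$ by an explicit thinning construction driven by a single Poisson random measure. Set $\lambda^{*}=\sup_{x\in\R^d}\nu(x,\R^k)<\infty$ and take a filtered probability space $(\Omega,\Fcal,\F,\P)$ rich enough to carry a Poisson random measure $\mathcal{N}$ on $\R_+\times[0,\lambda^{*}]\times[0,1]$ with Lebesgue intensity $ds\otimes du\otimes dv$, with $\F$ the augmented natural filtration of $\mathcal{N}$. Its atoms $(T_i,U_i,V_i)_{i\ge 1}$ can be enumerated so that $0<T_1<T_2<\cdots\to\infty$ almost surely, since only finitely many atoms fall in any slab $[0,T]\times[0,\lambda^{*}]\times[0,1]$. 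A standard measurable-selection argument produces a jointly Borel map $\phi\colon\R^d\times[0,\lambda^{*}]\times[0,1]\to\R^k$ such that, for every $x$, the pushforward of Lebesgue measure on $[0,\lambda^{*}]\times[0,1]$ under $\phi(x,\fdot,\fdot)$, restricted to $\R^k\setminus\{0\}$, equals $\nu(x,d\zeta)$; concretely, one can take $\phi(x,u,v)=F(x,v)\bm{1}_{u\le \nu(x,\R^k)}$ where $F(x,\fdot)$ is a Borel inverse transform for the normalized measure $\nu(x,\fdot)/\nu(x,\R^k)$.

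Next I would define $(X,Z)$ inductively on the stochastic intervals $[T_i,T_{i+1})$. Set $Z_t=0$ for $t\in[0,T_1)$ and, having defined $Z$ on $[0,T_i)$, let
\[
X_{T_i}=g_0(T_i)+\int_{[0,T_i)}K(T_i-s)\,dZ_s,\qquad \Delta Z_{T_i}=\phi(X_{T_i},U_i,V_i),
\]
and extend $Z$ by $Z_t=Z_{T_i-}+\Delta Z_{T_i}$ for $t\in[T_i,T_{i+1})$. Since $Z$ is piecewise constant with finitely many jumps on compacts, the integral reduces to a finite sum and one obtains the closed form
\[
X_t=g_0(t)+\sum_{j\,:\,T_j<t}K(t-T_j)\Delta Z_{T_j},\qquad t\ge 0,
\]
which satisfies $X_t=g_0(t)+\int_{[0,t)}K(t-s)\,dZ_s$ pointwise at every $t$. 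Because $T_i\to\infty$ almost surely, the construction is well-defined on all of $\R_+$.

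It remains to verify that $X$ is predictable and that $Z$ has the stated differential characteristics. For predictability, each summand $(t,\omega)\mapsto K(t-T_i(\omega))\Delta Z_{T_i}(\omega)\bm{1}_{T_i(\omega)<t}$ is predictable because $\Delta Z_{T_i}$ is $\Fcal_{T_i}$-measurable and, by a standard monotone-class argument, $(t,\omega)\mapsto h(t,T_i(\omega))$ is predictable whenever $h\colon\R_+\times\R_+\to\R$ is Borel and vanishes on $\{t\le s\}$. Adding the deterministic Borel function $g_0$ preserves predictability. Since $Z$ is pure jump by construction, its continuous local martingale part vanishes and the diffusion characteristic is zero. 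Predictability of $X_s$ combined with the projection formula for integer-valued random measures (see e.g.\ \citet[Ch.~II]{jac_shi_03}) yields
\[
\nu^Z([0,t]\times A)=\int_0^t\int_0^{\lambda^{*}}\int_0^1\bm{1}_{\phi(X_s,u,v)\in A}\,dv\,du\,ds=\int_0^t\nu(X_s,A)\,ds
\]
for every Borel $A\subset\R^k\setminus\{0\}$, by the defining property of $\phi$. Therefore, under the truncation $\chi(\zeta)=\zeta$, the process $Z_t-\int_0^t b(X_s)\,ds$ is a purely discontinuous local martingale, which identifies the first characteristic as $\int_0^{\fdot}b(X_s)\,ds$ with $b(X)=\int\zeta\nu(X,d\zeta)$, and the third as $\nu(X,d\zeta)$.

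The main obstacle is not any single deep step but the bookkeeping: producing a jointly measurable $\phi$, establishing predictability of $X$ even though its paths need not be left-continuous when $K$ is merely Borel, and invoking the correct projection formula for the compensator of a thinned marked Poisson process. Each ingredient is classical, but they must be assembled so that the inductive construction is consistent across the countably many jump times of $\mathcal{N}$ and yields a genuine semimartingale with the desired triplet.
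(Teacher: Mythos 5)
Your proof is correct but takes a genuinely different route from the paper's. The paper recursively constructs state-dependent exponential clocks $T_n = \inf\{t > T_{n-1}\colon \int_{T_{n-1}}^t \nu(X^{n-1}_s, \R^k)\,ds \ge E_n\}$ from i.i.d.\ exponentials $E_n$, draws each jump size from the normalized measure $\nu(X^{n-1}_{T_n},\cdot)/\nu(X^{n-1}_{T_n},\R^k)$ via an independent uniform $U_n$, and works with the filtration generated by $Z$ itself; every $T_n$ is then an actual jump of $Z$, and $T_n\to\infty$ follows from Borel--Cantelli because $T_n-T_{n-1}\ge E_n/c$. You instead fix a homogeneous Poisson random measure on $\R_+\times[0,\lambda^{*}]\times[0,1]$ at the uniform majorant rate $\lambda^{*}=\sup_x\nu(x,\R^k)$ and thin it through $\phi(x,u,v)=F(x,v)\bm{1}_{u\le\nu(x,\R^k)}$, accepting ghost atoms with zero jump, so $T_n\to\infty$ is immediate from local finiteness of the Poisson random measure. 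Both are standard ways of realizing a marked point process with predictable intensity kernel $\nu(X_t,d\zeta)\,dt$, and both establish predictability of $X$ by essentially the same monotone-class argument applied to $K(t-T_i)\Delta Z_{T_i}\bm{1}_{T_i<t}$. Your thinning route buys a more transparent identification of the compensator (it drops straight out of the projection formula for the ambient Poisson random measure); the paper's exponential-clock route is more self-contained and avoids introducing an auxiliary random measure and ghost jump times, at the cost of a slightly terser compensator verification ``by construction.''
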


\begin{proof}
Let $\{(U_n,E_n)\colon n\in\N\}$ be a collection of independent random variables on a probability space $(\Omega,\Fcal,\P)$, with $U_n$ standard uniform and $E_n$ standard exponential. Define
\[
T_0 = 0, \quad X^0_t = g_0(t), \quad Z^0_t=0, \quad t\ge0.
\]
We now construct processes $X^n$, $Z^n$ and random times $T_n$ recursively as follows. For each $n\in\N$, if $X^{n-1}$ and $Z^{n-1}$ have already been constructed, define a jump time $T_n$ and jump size $J_n$ as follows. First set
\[
T_n = \inf\{t>T_{n-1}\colon \int_{T_{n-1}}^t \nu(X^{n-1}_s,\R^k) ds \ge E_n\},
\]
and note that $T_n>T_{n-1}$ since the kernel $\nu(x,d\zeta)$ is bounded. Then let $F\colon\R^d\times[0,1]\to\R^k$ be a measurable function with the following property: If $U$ is standard uniform, then $F(x,U)$ has distribution $\nu(x,\fdot)/\nu(x,\R^k)$ if $\nu(x,\R^k)>0$, and $F(x,U)=0$ otherwise. Set $J_n=F(X^{n-1}_{T_n},U_n)$. We can now define
\begin{align*}
X^n_t &= X^{n-1}_t + K(t-T_n)J_n \bm1_{t>T_n} \\
Z^n_t &= Z^{n-1}_t + J_n \bm1_{t\ge T_n}
\end{align*}
for $t\ge0$. Note that $(X^n,Z^n)$ coincides with $(X^{n-1},Z^{n-1})$ on $[0,T_n)$. 

Since the kernel $\nu(x,d\zeta)$ is bounded, we have $\sup_{x\in\R^d}\nu(x,\R^k)\le c$ for some constant $c$, and thus $T_n-T_{n-1}\ge \inf\{t>0\colon ct\ge E_n\}=E_n/c$. It follows from the Borel--Cantelli lemma that $\lim_{n\to\infty}T_n=\sum_{n\in\N}(T_n-T_{n-1})=\infty$. We can thus define $(X_t,Z_t)$ for all $t\ge0$ by setting $(X_t,Z_t)=(X^n_t,Z^n_t)$ for $t<T_n$. It follows from the construction that $Z$ is c\`adl\`ag and piecewise constant, and that
\[
X_t = g_0(t) + \sum_{n\colon t>T_n} K(t-T_n) \Delta Z_{T_n}, \quad t\ge0.
\]
This is the desired convolution equation.

Let $(\Fcal_t)_{t\ge0}$ be the filtration generated by $Z$, so that in particular $Z$ is a semimartingale. It follows from the construction of $Z$ that its jump characteristic is $\nu(X_t,d\zeta)dt$, provided $X$ is predictable. We now show that this is the case. Indeed, any process of the form $f(t)g(T_n,J_n)\bm1_{t>T_n}$ is predictable, so by a monotone class argument the same is true for $K(t-T_n)J_n \bm1_{t>T_n}$. Since $X^0=g_0$ is predictable, it follows by induction that $X^n$ is predictable for each $n$. Thus $X$ is predictable, and the proof is complete.
\end{proof}

We now proceed with the proof of Theorem~\ref{T_weak_existence}. Throughout the rest of this section, we therefore consider $d,k\in\N$, $p\in[2,\infty)$, and $(g_0,K,b,a,\nu)$ as in \ref{SVE_IC}--\ref{SVE_abnu}. We assume that $b$ and $a$ are continuous, and that $x\mapsto |\zeta|^2 \nu(x,d\zeta)$ is continuous from $\R^d$ to $M_+(\R^k)$, the finite positive measures on $\R^k$ with the topology of weak convergence. We also assume there exist a constant $\eta\in(0,1)$, a locally bounded function $c_K\colon\R_+\to\R_+$, and a constant $c_{\rm LG}$ such that \eqref{eq_K_Wpnu} and \eqref{eq_LG} hold.

Lemma~\ref{L_SVE_iff_MGP} connects \eqref{eq_SVEC} to the local martingale problem for $(g_0,K,A)$, where the operator $A$ is given by
\begin{equation}\label{eq_MGP_pf_existence}
\begin{aligned}
Af(x,z) &= b(x)^\top \nabla f(z) + \frac12 \tr(a(x)\nabla^2f(z)) \\
&\quad + \int_{\R^k}(f(z+\zeta)-f(z)-\zeta^\top\nabla f(z))\nu(x,d\zeta).
\end{aligned}
\end{equation}
By the same arguments as in the proof of Theorem~\ref{T_stability}, the inequality \eqref{eq_MGP_pf_existence_3} and the  growth bound \eqref{eq_LG}, 
 $A$ satisfies \eqref{lingrowth_MGP} with the constants $c_f$ given by \eqref{eq_constant_cf}. In the following lemma, we construct approximations of $A$.

\begin{lemma}\label{L_approxA}
Let $A$ be as in~\eqref{eq_MGP_pf_existence}. Then there exist kernels $\nu^n(x,d\zeta)$ from $\R^d$ into $\R^k$ with the following properties.
\begin{enumerate}
\item\label{L_approxA_1} boundedness and compact support: $\sup_{x\in\R^d}\nu^n(x,\R^k)<\infty$, and $\nu^n(x,\fdot)$ is compactly supported for every $x\in\R^d$,
\item\label{L_approxA_2} linear growth uniformly in $n$: with $b^n(x)=\int_{\R^k}\zeta\nu^n(x,d\zeta)$, one has
\begin{equation}\label{L_approxA_2_LG}
|b^n(x)|^2 + \int_{\R^k} |\zeta|^2 \nu^n(x,d\zeta) + \left(\int_{\R^k}|\zeta|^p \nu^n(x,d\zeta)\right)^{2/p} \le c_{\rm LG}' (1+|x|^2),
\end{equation}
where $c'_{\rm LG}= (5+2\sqrt{d})c_{\rm LG}$,
\item\label{L_approxA_3} locally uniform approximation: for every $f\in C^2_c(\R^k)$, defining
\[
A^nf(x,z)=\int_{\R^k}(f(z+\zeta)-f(z))\nu^n(x,d\zeta),
\]
we have $A^nf\in C(\R^d\times\R^k)$ and $A^nf\to Af$ locally uniformly.
\end{enumerate}
\end{lemma}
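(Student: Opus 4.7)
My plan is to build $\nu^n$ as a sum of three pieces that separately encode the drift, diffusion, and compensated jump parts of $A$: a Dirac atom near the origin that realises the drift, a symmetric pair of Dirac atoms that realises the diffusion (the standard finite-difference stencil for a Laplacian), and a truncation of $\nu$ itself supported away from $0$ and $\infty$. Let $\sigma(x)$ denote the positive symmetric square root of $a(x)$, with columns $\sigma_1(x),\ldots,\sigma_k(x)$, and pick continuous cutoffs $\phi_n\colon\R^k\to[0,1]$ equal to $1$ on $\{2/n\le|\zeta|\le n/2\}$ and supported in $\{1/n\le|\zeta|\le n\}$, and $\chi_n\colon\R^d\to[0,1]$ equal to $1$ on $\{|x|\le n\}$ and vanishing for $|x|\ge 2n$. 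Setting $c_n(x)=\int\phi_n(\zeta)\zeta\,\nu(x,d\zeta)$ (which is finite because $\phi_n$ is bounded away from $0$), take
\[
\nu^n(x,d\zeta) = \chi_n(x)\Bigl[n^\beta\,\delta_{(b(x)-c_n(x))/n^\beta}(d\zeta) + \frac{n^2}{2}\sum_{l=1}^k\bigl(\delta_{\sigma_l(x)/n}(d\zeta)+\delta_{-\sigma_l(x)/n}(d\zeta)\bigr) + \phi_n(\zeta)\nu(x,d\zeta)\Bigr]
\]
for an exponent $\beta\ge 4$. The $-c_n$ in the first atom is included precisely so that the first moments cancel: $b^n(x) = \chi_n(x)b(x)$.

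Condition \ref{L_approxA_3} will then be a Taylor expansion / dominated convergence exercise. Fix a compact $K\subset\R^d\times\R^k$. For $n$ large the $x$-cutoff is inactive on $K$; the drift Dirac contributes $(b-c_n)\cdot\nabla f + O(|b-c_n|^2/n^\beta)$ with the error vanishing uniformly on $K$ because $|b-c_n|=O(n)$ for $x$ in a bounded set; the diffusion Diracs contribute $\tfrac12\tr(a\nabla^2 f)+O(1/n)$ via the classical symmetric finite-difference expansion; and the third piece equals
\[
\int\phi_n(\zeta)(f(z+\zeta)-f(z)-\zeta^\top\nabla f(z))\,\nu(x,d\zeta) + c_n(x)\cdot\nabla f(z).
\]
By dominated convergence (the integrand is $O(|\zeta|^2\wedge|\zeta|)$ and $\phi_n\to 1$ pointwise off $0$) the first term converges to $\int(f(z+\zeta)-f(z)-\zeta^\top\nabla f)\nu$, while the $c_n\cdot\nabla f$ cancels the $-c_n\cdot\nabla f$ from the drift Dirac, leaving $Af$ in the limit. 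Joint continuity of $A^nf$ in $(x,z)$ follows because each summand is manifestly continuous in $x$: for the $\phi_n\nu$ piece one rewrites $(f(z+\zeta)-f(z))\phi_n(\zeta)=g(z,\zeta)|\zeta|^2$ with the prefactor $g$ jointly continuous and bounded on compacts (since $\phi_n$ vanishes near $0$), and applies the assumed weak continuity of $x\mapsto |\zeta|^2\nu(x,d\zeta)$.

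The main obstacle will be the uniform-in-$n$ linear growth \ref{L_approxA_2}. The bound $|b^n|^2\le c_{\rm LG}(1+|x|^2)$ is immediate since $b^n = \chi_n b$. The diffusion Diracs contribute $\tr(a)\le\sqrt{k}\,c_{\rm LG}(1+|x|^2)$ to $\int|\zeta|^2\nu^n$ (using $\tr(a)\le\sqrt k\,|a|_F$), and the truncated jump piece contributes at most $\int|\zeta|^2\nu\le c_{\rm LG}(1+|x|^2)$. The delicate term is the drift Dirac's $|b-c_n|^2/n^\beta$: because $\int|\zeta|\,\nu$ may fail to be finite at $0$, Cauchy--Schwarz together with $\nu(x,\{|\zeta|\ge 1/n\})\le n^2\int|\zeta|^2\nu$ only yields $|c_n|\le n\,c_{\rm LG}(1+|x|^2)$, hence $|c_n|^2/n^\beta\le c_{\rm LG}^2(1+|x|^2)^2/n^{\beta-2}$. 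The $x$-cutoff keeps $(1+|x|^2)\le 5n^2$ wherever $\nu^n\ne 0$, so one factor of $(1+|x|^2)$ is absorbed into $n^2$, yielding $|c_n|^2/n^\beta\le 5c_{\rm LG}^2(1+|x|^2)\cdot n^{4-\beta}$, which is linear in $(1+|x|^2)$ uniformly in $n$ once $\beta\ge 4$. A parallel computation, using the power-mean inequality $\sum_l|\sigma_l|^p\le k^{1-p/2}\tr(a)^{p/2}$ together with the assumed bound on $(\int|\zeta|^p\nu)^{2/p}$, controls $(\int|\zeta|^p\nu^n)^{2/p}$ analogously, yielding the overall estimate in \ref{L_approxA_2}.
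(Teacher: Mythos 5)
Your construction is a genuine variant of the paper's. You pack the drift, diffusion, and truncated jump parts into a single kernel, with a drift correction $c_n(x)=\int\phi_n\zeta\,\nu(x,d\zeta)$ folded into the drift Dirac so that $b^n=\chi_n b$ exactly, and you use an $n$-dependent spatial cutoff $\chi_n$. The paper instead approximates $b$, $a$, $\nu$ separately and adds the pieces (exploiting subadditivity of the left-hand side of \eqref{L_approxA_2_LG}), and kills the first moment of the truncated jump piece by adding a small compensating Dirac $\tfrac1\varepsilon\delta_{-\varepsilon\zeta}$ \emph{inside} the $\zeta$-integral rather than a $c_n$-type shift. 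This is not merely cosmetic: the compensating Dirac has second moment $\varepsilon\int|\zeta|^2\varphi_n\,\nu\le\varepsilon\,c_{\rm LG}(1+|x|^2)$, so its contribution to \eqref{L_approxA_2_LG} is simultaneously small and linear in $c_{\rm LG}(1+|x|^2)$, yielding the asserted constant $(5+2\sqrt d)c_{\rm LG}$. In your version, the drift Dirac contributes $|b-c_n|^2/n^\beta$; Cauchy--Schwarz only gives $|c_n|\le n\,c_{\rm LG}(1+|x|^2)$, and after absorbing one factor of $(1+|x|^2)$ into $n^2$ via $\chi_n$ you land on a bound of the form $C\,c_{\rm LG}^2(1+|x|^2)$. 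That is still uniform in $n$, which is all the downstream application needs, but it does not recover the constant $c'_{\rm LG}=(5+2\sqrt d)c_{\rm LG}$ claimed in the statement.

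The more serious gap is in \ref{L_approxA_3}. Your ``dominated convergence'' argument for the truncated jump piece gives, for each fixed $x$, that $\int(1-\phi_n(\zeta))|\zeta|^2\nu(x,d\zeta)\to0$; it does \emph{not} give $\sup_{x\in Q}\int(1-\phi_n)|\zeta|^2\nu(x,d\zeta)\to0$, which is what locally uniform convergence requires (the measure $\nu(x,\cdot)$ changes with $x$, so there is no single dominating measure). This is precisely where the hypothesis that $x\mapsto|\zeta|^2\nu(x,d\zeta)$ is weakly continuous does real work: the paper uses it to get compactness (hence tightness) of $\{|\zeta|^2\nu(x,\cdot)\colon x\in Q\}$ in $M_+(\R^k)$, which handles the large-$|\zeta|$ tail uniformly, and a separate maximizing-sequence argument with monotone cutoffs $\psi_{n+1}\le\psi_n$ (requiring $\varphi_{n+1}\ge\varphi_n$, a property you never impose on $\phi_n$) for the near-origin part. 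You invoke the weak continuity only for continuity of $A^nf$, not for uniformity of the convergence, and this step cannot be skipped. Finally, a minor point: the inequality $\sum_l|\sigma_l|^p\le k^{1-p/2}\tr(a)^{p/2}$ attributed to the power-mean inequality is false for $p>2$ (the power-mean inequality goes the other way); what you need, and what holds, is the simpler $\sum_l|\sigma_l|^p\le\tr(a)^{p/2}$, which follows from $\sum_i a_i^{r}\le(\sum_i a_i)^{r}$ for $r\ge1$, $a_i\ge0$.
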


\begin{proof}
Multiplying by a continuous cutoff function if necessary, we may assume that $b(x)$, $a(x)$, and $\nu(x,d\zeta)$ are zero for all $x$ outside some compact set $Q$. Moreover, we can approximate the $b$, $a$, and $\nu$ parts separately and then add up the approximations (observing that the left-hand side of \eqref{L_approxA_2_LG} is subadditive in $(b^n,\nu^n)$, so that we may simply add up the corresponding constants $c_{\rm LG}'$).

Suppose first that $a$ and $\nu$ are zero, and let
\[
\nu^n(x,d\zeta) =  \frac{1}{\varepsilon}\delta_{\varepsilon b(x)}(d\zeta) \bm1_{\zeta\ne0},
\]
where $\varepsilon=n^{-1}$. Clearly \ref{L_approxA_1} holds. Moreover, $A^nf(x,z)=\varepsilon^{-1} (f(z+\varepsilon b(x))-f(z))$ lies in $C(\R^d\times\R^k)$, and converges to $Af(x,z)=b(x)^\top\nabla f(z)$. The convergence is locally uniform, since the difference quotients converge locally uniformly for $f\in C^2(\R^k)$. Thus \ref{L_approxA_3} holds. Finally, note that $b^n(x)=b(x)$, and that $\int_{\R^k}|\zeta|^q\nu^n(x,d\zeta)=\varepsilon^{q-1}|b(x)|^q$ for any $q\ge2$. Thus it follows from \eqref{eq_LG} that \eqref{L_approxA_2_LG} holds with $c_{\rm LG}'=3c_{\rm LG}$.

Suppose instead that $b$ and $\nu$ are zero. Write $\sigma(x)=a(x)^{1/2}$ using the positive semidefinite square root. Then $x\mapsto\sigma(x)$ is again continuous and compactly supported. So are its columns, denoted by $\sigma_1(x),\ldots,\sigma_d(x)$. Let
\[
\nu^n(x,d\zeta) =  \frac{1}{2\varepsilon^2} \sum_{i=1}^d (\delta_{\varepsilon\sigma_i(x)}(d\zeta)+\delta_{-\varepsilon\sigma_i(x)}(d\zeta))\bm1_{\zeta\ne0},
\]
where again $\varepsilon=n^{-1}$. As before, \ref{L_approxA_1} holds. Moreover,
\begin{align*}
A^n f(x,z) &= \frac12 \sum_{i=1}^d \frac{f(z+\varepsilon\sigma_i(x)) - 2f(z) + f(z-\varepsilon\sigma_i(x))}{\varepsilon^2} \\
&\to \frac12 \sum_{i=1}^d \sigma_i(x)^\top \nabla^2 f(z) \sigma_i(x) =  \frac12\tr(a(x)\nabla^2f(z)).
\end{align*}
Again, $A^nf$ lies in $C(\R^d\times\R^k)$ and the convergence is locally uniform since $f$ is $C^2$ and the $\sigma_i$ are continuous. This gives \ref{L_approxA_3}. Next, we have $b^n(x)=0$. Also, writing $\sigma_i^j(x)$ for the $j$th component of $\sigma_i(x)$, we have
\[
\int_{\R^k}|\zeta|^q\nu^n(x,d\zeta)=\varepsilon^{q-2}\sum_{i=1}^d |\sigma_i(x)|^q \le \Big( \sum_{i,j=1}^d |\sigma_i^j(x)|^2\Big)^{q/2} = \tr(a(x))^{q/2}
\]
for any $q\ge2$. Since also $\tr(a(x))\le\sqrt{d}\,|a(x)|$, it follows from \eqref{eq_LG} that \eqref{L_approxA_2_LG} holds with $c_{\rm LG}'=2\sqrt{d}\,c_{\rm LG}$.

Finally, suppose that $b$ and $a$ are zero. Let $\varphi_n$ be a continuous cutoff function supported on $[n^{-1},n]$ and equal to one on $[2n^{-1},n/2]$. We arrange so that $\varphi_{n+1}\ge\varphi_n$ for all $n$. Let
\[
\nu^n(x,B) = \int_{\R^k} \left(\delta_\zeta(B) + \frac1\varepsilon\delta_{-\varepsilon\zeta}(B)\right) \varphi_n(|\zeta|) \nu(x,d\zeta),
\]
where again $\varepsilon=n^{-1}$. Clearly $\nu^n(x,\fdot)$ has compact support. Moreover,
\begin{align*}
\nu^n(x,\R^k) &\le \left(1+\frac1\varepsilon\right) \int_{\R^k} n^2 |\zeta|^2 \nu(x,d\zeta) \\
&\le c_{\rm LG}(1+n)n^2  \sup_{x\in Q}(1+|x|^2) < \infty,
\end{align*}
due to the growth bound \eqref{eq_LG} and recalling that we assumed $\nu(x,d\zeta)=0$ for all $x$ outside some compact set $Q$. We deduce that \ref{L_approxA_1} holds. Next, we have
\[
b^n(x) = \int_{\R^k} \left(\zeta + \frac1\varepsilon (-\varepsilon \zeta)\right) \varphi_n(|\zeta|) \nu(x,d\zeta) = 0
\]
and
\[
\int_{\R^k}|\zeta|^q\nu^n(x,d\zeta) = 2 \int_{\R^k} |\zeta|^q \varphi_n(|\zeta|) \nu(x,d\zeta) \le 2 \int_{\R^k} |\zeta|^q \nu(x,d\zeta).
\]
Thus it follows from \eqref{eq_LG} that \eqref{L_approxA_2_LG} holds with $c_{\rm LG}'=2c_{\rm LG}$.
It remains to show that $A^nf\to Af$ locally uniformly. Write
\begin{align*}
Af(x,z)&-A^nf(x,z)\\
&= \int_{\R^k}\left(f(z+\zeta)-f(z)-\zeta^\top\nabla f(z)\right)(1-\varphi_n(|\zeta|))\nu(x,d\zeta) \\
&\quad + \int_{\R^k} \frac{1}{\varepsilon} \left(f(z) - f(z-\varepsilon\zeta) - \varepsilon\zeta^\top\nabla f(z)\right) \varphi_n(|\zeta|)\nu(x,d\zeta).
\end{align*}
Due to \eqref{eq_MGP_pf_existence_3} and the bound
\[
|f(z) - f(z-\varepsilon\zeta) - \varepsilon\zeta^\top\nabla f(z)| \le \frac{\varepsilon^2}{2} \|\nabla^2f\|_\infty |\zeta|^2,
\]
we obtain
\begin{equation}\label{eq_L_approx_pf_6_new}
\begin{aligned}
|Af(x,z)-A^nf(x,z)| \le c \int_{\R^k}  (1-\varphi_n(|\zeta|)) \widetilde\nu(x,d\zeta) + \frac{c}{n} \widetilde\nu(x,\R^k) 
\end{aligned}
\end{equation}
for the constant $c=\frac12\|\nabla^2f\|_\infty$ and the finite kernel
\[
\widetilde\nu(x,d\zeta) = |\zeta|^2 \nu(x,d\zeta).
\]
Thanks to the growth bound \eqref{eq_LG} and the assumption that $\nu(x,d\zeta)=0$ for all $x$ outside a compact set $Q$, we have $\int_{\R^k}  |\zeta|^2 \nu(x,d\zeta)\le c_{\rm LG}\sup_{x\in Q}(1+|x|^2)<\infty$. Thus the second term on the right-hand side of \eqref{eq_L_approx_pf_6_new} tends to zero uniformly as $n\to\infty$. To bound the first term, write
\begin{equation}\label{eq_L_approx_pf_4_new}
\int_{\R^d}   (1-\varphi_n(|\zeta|)) \widetilde\nu(x,d\zeta) \le \int_{\R^d}  \psi_n(|\zeta|) \widetilde\nu(x,d\zeta) + \int_{\R^d}  \bm1_{|\zeta|\ge n/2} \widetilde\nu(x,d\zeta),
\end{equation}
where $\psi_n=(1-\varphi_n)\bm1_{[0,2n^{-1}]}$ is continuous and supported on $[0,2n^{-1}]$. We bound the two terms on the right-hand side of \eqref{eq_L_approx_pf_4_new} separately.

First, by assumption, $x\mapsto\widetilde\nu(x,d\zeta)$ is continuous from $\R^d$ to $M_+(\R^k)$. Moreover, $\widetilde\nu(x,d\zeta)$ is zero for $x$ outside a compact set $Q$. Thus the set $P=\{\widetilde\nu(x,d\zeta)\colon x\in\R^d\} = \{\widetilde\nu(x,d\zeta)\colon x\in Q\}$ is a compact subset of $M_+(\R^d)$, being a continuous image of a compact set. Therefore $P$ is tight, so that
\begin{equation}\label{eq_L_approx_pf_2_new}
\sup_{x\in\R^d} \int_{\R^d} \bm1_{|\zeta|\ge n/2} \widetilde\nu(x,d\zeta) = \sup_{\mu\in P} \mu( B(0,n/2)^c ) \to 0, \quad n\to\infty.
\end{equation}
Next, we claim that
\begin{equation}\label{eq_L_approx_pf_1_new}
\limsup_{n\to\infty} \sup_{x\in\R^d} \int_{\R^d}  \psi_n(|\zeta|) \widetilde\nu(x,d\zeta) = 0.
\end{equation}
Let $v$ denote the limsup in \eqref{eq_L_approx_pf_1_new}. For each $n$, $x\mapsto\int_{\R^d}  \psi_n(|\zeta|) \widetilde\nu(x,d\zeta)$ is continuous and supported on $Q$, hence maximized at some $x_n\in Q$. After passing to a subsequence, we have $x_n\to\bar x$ for some $\bar x\in Q$, and $\int_{\R^d}  \psi_n(|\zeta|) \widetilde\nu(x_n,d\zeta)\to v$. By the choice of $\varphi_n$, we have $\psi_{n+1}\le\psi_n$ for all $n$. As a result, for each fixed $m$,
\[
v\le\lim_{n\to\infty}\int_{\R^d}  \psi_m(|\zeta|) \widetilde\nu(x_n,d\zeta)=\int_{\R^d}  \psi_m(|\zeta|) \widetilde\nu(\bar x,d\zeta).
\]
This tends to zero as $m\to\infty$ by dominated convergence, since $\widetilde\nu(\bar x,\{0\})=0$. Thus $v=0$, that is, \eqref{eq_L_approx_pf_1_new} holds. Combining \eqref{eq_L_approx_pf_4_new}, \eqref{eq_L_approx_pf_2_new}, and \eqref{eq_L_approx_pf_1_new}, it follows that also the first term on the right-hand side of \eqref{eq_L_approx_pf_6_new} tends to zero uniformly as $n\to\infty$. This gives \ref{L_approxA_3} and completes the proof of the lemma.
\end{proof}

We can now complete the proof of existence of weak $L^p$ solutions.

\begin{proof}[Proof of Theorem~\ref{T_weak_existence}]
Consider the kernels $\nu^n(x,d\zeta)$ and corresponding triplets $(b^n,0,\nu^n)$ given by Lemma~\ref{L_approxA}. Apply the basic existence result Lemma~\ref{L_basic_existence} with each kernel $\nu^n(x,d\zeta)$ and the given $g_0$ and $K$ to obtain processes $(X^n,Z^n)$. Note that the differential characteristics of $Z^n$ with respect to the ``truncation function'' $\chi(\zeta)=\zeta$ are $b^n(X^n),a^n(X^n)=0,\nu^n(X^n,d\zeta)$. Thus $(X^n,Z^n)$ is a weak $L^p$ solution of \eqref{eq_SVEC} for the data $(g_0,K,b^n,0,\nu^n)$.

The triplets $(b^n,0,\nu^n)$ satisfy the growth bound in Lemma~\ref{L_approxA}\ref{L_approxA_2} with a common constant $c_{\rm LG}'$. Corollary~\ref{C_tightness} thus implies that the sequence $\{X^n\}_{n\in\N}$ is tight in $L^p_{\rm loc}$. By passing to a subsequence, we assume that $X^n\Rightarrow X$ in $L^p_{\rm loc}$ for some limiting process $X$.

We claim that the sequence $\{Z^n\}_{n\in\N}$ is tight in $D$. To prove this, first note that for any $T\in\R_+$, $m>0$, $\varepsilon>0$, we have
\begin{align*}
\P\left( \int_0^T \int_{\R^k} \bm1_{|\zeta|>m} \nu^n(X^n_t,d\zeta) > \varepsilon \right) &\le \frac{1}{m^2\varepsilon} \E\left[ \int_0^T \int_{\R^k} |\zeta|^2 \nu^n(X^n_t,d\zeta) \right] \\
&\le \frac{1}{m^2\varepsilon} c_{\rm LG}'  \left( T + \E[ \|X^n\|_{L^2(0,T)}^2 ] \right).
\end{align*}
Theorem~\ref{T_apriori} shows that the expectation on the right-hand side is bounded by a constant that does not depend on $n$. Therefore,
\[
\lim_{m\to\infty}\sup_{n\in\N} \P\left( \int_0^T \int_{\R^k} \bm1_{|\zeta|>m} \nu^n(X^n_t,d\zeta) > \varepsilon \right) = 0.
\]
Furthermore, the increasing process
\begin{equation}\label{eq_proof_weak_existence_1}
\int_0^t \left( |b^n(X^n_s)| + \int_{\R^k} |\zeta|^2 \nu^n(X^n_s,d\zeta) \right) ds, \quad t\ge0,
\end{equation}
is {\em strongly majorized} by $c_{\rm LG}' \int_0^\fdot (1 + |X^n_s|^2)ds$ in the sense that the difference of the two is increasing; see \citet[Definition~VI.3.34]{jac_shi_03}. The latter process converges weakly to the continuous increasing process $c_{\rm LG}' \int_0^\fdot (1 + |X_s|^2)ds$. Thus \eqref{eq_proof_weak_existence_1} is tight with only continuous limit points; see \citet[Proposition~VI.3.35]{jac_shi_03}. With these observations we may now apply \citet[Theorem~VI.4.18 and Remark~VI.4.20(2)]{jac_shi_03} to conclude that $\{Z^n\}_{n\in\N}$ is tight in $D$.

Finally, by passing to a further subsequence, we now have $(X^n,Z^n)\Rightarrow(X,Z)$ in $L^p_{\rm loc}\times D$ for some limiting process $(X,Z)$. An application of Theorem~\ref{T_stability} then shows that $(X,Z)$ is a weak $L^p$ solution of \eqref{eq_SVEC} for the data $(g_0,K,b,a,\nu)$, as desired. The proof of Theorem~\ref{T_weak_existence} is complete.
\end{proof}

\section{Uniqueness of weak $L^p$ solutions}
\label{s:uniqueness}

We now turn to pathwise uniqueness and uniqueness in law under suitable Lipschitz conditions.

Let $(X,Z)$ be a weak $L^p$ solution of \eqref{eq_SVEC} for the data $(g_0,K,b,a,\nu)$, where $\int_{\mathbb{R}^k} |\zeta|^2\nu(x,d\zeta) < \infty$. The characteristics are understood with respect to the ``truncation function'' $\chi(\zeta)=\zeta$. Standard representation theorems for semimartingales allow us to express $Z$ as a stochastic integral with respect to time, Brownian motion, and a compensated Poisson random measure; see \citet[Theorem 2.1.2]{JP11} and \citet{EKL77,LM76}. It follows that $X$ satisfies a $d$-dimensional stochastic Volterra equation of the form
\begin{equation}\label{eq:jumpsveLip}
\begin{split}
X_t &= g_0(t)+\int_0^t K(t-s)  b(X_s)ds + \int_0^t K(t-s) \sigma(X_s)dW_s \\ &\quad + \int_{[0,t) \times \R^m}   K(t-s) \gamma(X_{s},\xi) (\mu(ds,d\xi)-F(d\xi)ds), \quad \text{$\P\otimes dt$-a.e.}
\end{split}
\end{equation}
for some $d'$-dimensional Brownian motion $W$, Poisson random measure $\mu$ on $\R_+ \times \R^m$ with compensator $dt\otimes F(d\xi)$, and some measurable functions $\sigma\colon\R^d \to \R^{k\times d'}$ and $\gamma\colon\R^d \times \R^{m} \to \R^k$ such that
\[
\text{$a(x)= \sigma(x) \sigma(x)^{\top}$ \quad and \quad $\nu(x,B) = \int_{\R^m} \bm1_B(\gamma(x,\xi))F(d\xi)$.}
\]
Both $W$ and $\mu$ are defined on some extension $(\Omega,\Fcal,\F,\P)$ of the filtered probability space where $X$ and $Z$ are defined.

Conversely, given $(g_0,K,b,\sigma,\gamma,F)$ along with a filtered probability space $(\Omega,\Fcal,\F,\P)$ equipped with a $d'$-dimensional Brownian motion $W$ and Poisson random measure $\mu$ on $\R_+ \times \R^m$ with compensator $dt\otimes F(d\xi)$, a \emph{solution of \eqref{eq:jumpsveLip}} is any predictable process $X$ on $(\Omega,\Fcal,\F,\P)$ with trajectories in $L^p_{\rm loc}$ such that \eqref{eq:jumpsveLip} holds. We are now in position to define pathwise uniqueness for such solutions. 

\begin{definition}\label{def:pathwiseuniqueness}
Fix $(g_0,K,b,\sigma,\gamma,F)$ as above. We say that \emph{pathwise uniqueness} holds for \eqref{eq:jumpsveLip} if for any $(\Omega,\Fcal,\F,\P)$, $W$, $\mu$ as above and any two solutions $X$ and $Y$ of \eqref{eq:jumpsveLip}, we have $X=Y$, $\mathbb{P} \otimes dt$-a.e.
\end{definition}

The powerful abstract machinery of \citet{K:14} can be used in this setting to relate pathwise uniqueness and weak existence  to strong existence and uniqueness in law. A \emph{strong solution} of \eqref{eq:jumpsveLip} in the sense of \citet[Definition 1.2]{K:14} is a weak $L^p$ solution $X$ which is $\mathbb{P} \otimes dt$-a.e.\ equal to a Borel measurable function of $W$ and $N=\int_0^{\cdot} \xi (\mu(d\xi,ds)-F(d\xi) ds)$ from \eqref{eq:jumpsveLip}.

\begin{theorem}\label{T_Kurtz_0}
The following are equivalent:
\begin{enumerate}
\item There exists a weak $L^p$ solution of \eqref{eq_SVEC}, and pathwise uniqueness holds for \eqref{eq:jumpsveLip}.
\item There exists a strong solution of \eqref{eq:jumpsveLip}, and joint uniqueness in law of $(X,W,N)$ holds.
\end{enumerate}
\end{theorem}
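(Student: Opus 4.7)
The plan is to reduce the statement to the abstract Yamada--Watanabe--Engelbert type theorem of \citet{K:14}, which is tailored precisely to the equivalence between (weak existence $+$ pathwise uniqueness) and (strong existence $+$ joint uniqueness in law) for stochastic equations constrained by a Borel relation between a driving noise and a candidate solution.

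First I would set up Kurtz's framework by identifying the driving noise $Y = (W, \mu)$ as a random element of the Polish space $C(\R_+, \R^{d'}) \times \Mcal$, where $\Mcal$ is the Polish space of locally finite point measures on $\R_+ \times \R^m$, and identifying $X$ as a random element of the Polish space $L^p_{\rm loc}$. The constraint consists of predictability of $X$ together with the equation \eqref{eq:jumpsveLip} holding $\P \otimes dt$-a.e.; combined with the compatibility of filtrations, this matches Kurtz's notion of a compatible pair. The right-hand side of \eqref{eq:jumpsveLip} defines a Borel measurable map from the $L^p_{\rm loc}$ trajectory of $X$ and the noise $(W, \mu)$ into $L^p_{\rm loc}$, since the stochastic integrals against $W$ and $\bar\mu = \mu - dt\otimes F$ are constructed as measurable limits and the convolution by $K\in L^p_{\rm loc}$ is a continuous operation thanks to Young's inequality.

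Next I would verify that the notions of solution match up. By the semimartingale representation theorems cited immediately before the statement (\citet[Theorem~2.1.2]{JP11} together with \citet{EKL77, LM76}), every weak $L^p$ solution $(X, Z)$ of \eqref{eq_SVEC} gives rise, on a suitable extension of the underlying probability space, to a weak solution of \eqref{eq:jumpsveLip} driven by some $(W, \mu)$; conversely, every solution of \eqref{eq:jumpsveLip} yields a weak $L^p$ solution of \eqref{eq_SVEC} upon setting $Z_t = \int_0^t b(X_s)\,ds + \int_0^t \sigma(X_s)\,dW_s + \int_{[0,t)\times\R^m}\gamma(X_{s-},\xi)\,\bar\mu(ds,d\xi)$. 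Under this correspondence, joint uniqueness in law of $(X, W, N)$ corresponds to uniqueness in law in Kurtz's framework, strong solutions of \eqref{eq:jumpsveLip} in the sense of \citet[Definition~1.2]{K:14} correspond to Kurtz-strong solutions, and Definition~\ref{def:pathwiseuniqueness} coincides with Kurtz's pathwise uniqueness.

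With the translation in place, I would invoke \citet[Theorem~1.5]{K:14} off the shelf to obtain the equivalence (i) $\Leftrightarrow$ (ii). The main obstacle is the initial framework verification: our solutions live in $L^p_{\rm loc}$ rather than in a Skorokhod or continuous path space, and equation \eqref{eq:jumpsveLip} is only imposed almost everywhere in $t$. The Polish structure of $L^p_{\rm loc}$, together with the measurability of stochastic integration as a map between Polish spaces and the bijective correspondence above, make this verification routine but somewhat tedious; once it is completed, no further work is required beyond citing Kurtz's theorem.
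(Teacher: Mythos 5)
Your proof takes essentially the same route as the paper: both reduce the theorem to the abstract Yamada--Watanabe--Engelbert machinery of \citet[Theorem~1.5]{K:14}, with the solution living in the Polish space $L^p_{\rm loc}$ and the noise encoded as a random element of a Polish product space (the paper takes $S_2 = D\times D$ for $(W,N)$ rather than your $C\times\Mcal$ for $(W,\mu)$, but this is cosmetic). The one place you gloss over a genuine step is the assertion that Definition~\ref{def:pathwiseuniqueness} ``coincides with Kurtz's pathwise uniqueness'': Kurtz's Theorem~1.5 is phrased in terms of pathwise uniqueness among jointly compatible solutions, and the paper explicitly invokes \citet[Lemma~2.10]{K:14} to bridge this with the two-solutions-on-a-common-space formulation used here, in addition to noting the adaptation of $\P$-a.s.\ statements to $\P\otimes dt$-a.e.\ ones.
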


\begin{proof}
Let $S_1= L^p_{\rm{loc}} $ and $S_2= D \times D$. 
Then the statement follows from \citet[Theorem~1.5 and Lemma~2.10]{K:14}. Indeed, \citet[Lemma 2.10]{K:14} clarifies that our notion of pathwise uniqueness is equivalent to the one used in \citet[Theorem 1.5]{K:14}. Note that the definitions in \citet[Definition 1.4, Definition 2.9]{K:14} have to be adapted to replace $\mathbb{P}$-a.s.\ assertions by $\mathbb{P} \otimes dt$-a.e.\ assertions. 
\end{proof}

As for standard SDEs, pathwise uniqueness holds under Lipschitz conditions on the coefficients.

\begin{theorem}\label{T_uniqueness}
Let $K \in L^2_{\rm loc}$ and suppose there exists a constant $c_{\rm Lip}$ such that $b,\sigma,\gamma,F$ in \eqref{eq:jumpsveLip} satisfy
\[
\begin{aligned}
	|b(x)&-b(y)|^2 +  |\sigma(x)-\sigma(y)|^2 \\&+ \int_{\R^m} | \gamma(x,\xi) - \gamma(y,\xi)|^2 F(d\xi)  \leq c_{\rm Lip} | x-y|^2
\end{aligned}
\]
for all $x,y \in \R^d$. Then pathwise uniqueness holds for \eqref{eq:jumpsveLip}, and hence also uniqueness in law of weak $L^p$ solutions of \eqref{eq_SVEC}.
\end{theorem}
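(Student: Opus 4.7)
The plan is to prove pathwise uniqueness by a direct $L^2$ energy estimate combined with the convolution Gronwall lemma, and then to upgrade pathwise uniqueness to uniqueness in law of weak $L^p$ solutions of \eqref{eq_SVEC} via Theorem~\ref{T_Kurtz_0}. To set up the estimate, fix $T\ge 0$ and let $X,Y$ be two solutions of \eqref{eq:jumpsveLip} on the same filtered probability space driven by the same Brownian motion $W$ and the same Poisson random measure $\mu$. The Lipschitz hypothesis on $(b,\sigma,\gamma)$ implies the quadratic growth bound \eqref{eq_LG} with $p=2$ (since $|b(x)|^2\le 2|b(0)|^2+2c_{\rm Lip}|x|^2$, and analogously for $|\sigma|^2$ and $\int|\gamma(x,\xi)|^2F(d\xi)$), so Theorem~\ref{T_apriori} gives $\E[\|X\|_{L^2(0,T)}^2]$ and $\E[\|Y\|_{L^2(0,T)}^2]$ are finite. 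In particular $\phi(t):=\E[|X_t-Y_t|^2]$ defines an $L^1(0,T)$ function.

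Subtracting the two instances of \eqref{eq:jumpsveLip} produces three integral terms driven by $ds$, $dW$, and the compensated jump measure. I would bound each in $L^2$ using Jensen's inequality for the drift, the It\^o isometry for the Brownian integral, and the compensation formula for the compensated Poisson integral; the Lipschitz bound then contracts each to a multiple of $|K(t-s)|^2\phi(s)$. Summing yields
\[
\phi(t) \le C_T \int_0^t |K(t-s)|^2 \phi(s)\,ds, \qquad \text{a.e.\ } t\in[0,T],
\]
for a constant $C_T$ depending on $T$, $d$, $k$ and $c_{\rm Lip}$. Because $K\in L^2_{\rm loc}$, the kernel $C_T|K|^2$ belongs to $L^1_{\rm loc}$ and admits a nonpositive resolvent in $L^1_{\rm loc}$ by \citet[Theorem~2.3.1]{GLS:90}, so the convolution Gronwall lemma (Lemma~\ref{L_Gronwall}, used in the proof of \eqref{eq:estimateLp}) applied to the nonnegative $L^1$ function $\phi$ with zero forcing term forces $\phi\equiv 0$ a.e. Hence $X=Y$, $\P\otimes dt$-a.e., which is pathwise uniqueness in the sense of Definition~\ref{def:pathwiseuniqueness}. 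Uniqueness in law of any two weak $L^p$ solutions of \eqref{eq_SVEC} then follows from Theorem~\ref{T_Kurtz_0}: combined with the existence of one such solution it yields joint uniqueness in law of $(X,W,N)$, and in particular of $X$; if no weak $L^p$ solution exists, uniqueness in law is vacuous.

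The main obstacle is the standard finiteness issue required to apply the $L^2$ isometries: one must verify that the Brownian and compensated Poisson integrals are genuine square-integrable martingales, i.e.\ that $\int_0^t |K(t-s)|^2 \E|\sigma(X_s)-\sigma(Y_s)|^2 ds$ and its jump counterpart are finite for almost every $t$. This is exactly what Young's inequality $L^1 \ast L^1\subset L^1$ delivers, since both $|K|^2$ and $c_{\rm Lip}\phi$ belong to $L^1_{\rm loc}$ thanks to Theorem~\ref{T_apriori}; note that this is also what makes localization by stopping times unnecessary, since the linear inequality for $\phi$ already lives entirely in $L^1_{\rm loc}$.
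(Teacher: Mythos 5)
Your argument is sound and arrives at the same contraction inequality as the paper, but via a genuinely different handling of the integrability issue. The paper localizes by stopping times $\tau_n=\inf\{t\colon\int_0^t(|X_s|^2+|Y_s|^2)ds\ge n\}\wedge T$, works with the truncated processes $X^n=X\bm1_{\cdot<\tau_n}$, $Y^n=Y\bm1_{\cdot<\tau_n}$ so that the cumulative function $f_n(t)=\E[\|X^n-Y^n\|_{L^2(0,t)}^2]$ is \emph{a priori} bounded by $n$, applies Lemma~\ref{L_Gronwall} to $f_n$, and then passes to the limit by monotone convergence. You instead avoid localization by first invoking Theorem~\ref{T_apriori} (applicable because the Lipschitz hypothesis plus the standing condition $\int|\zeta|^2\nu(x,d\zeta)<\infty$ gives \eqref{eq_LG} with $p=2$) to conclude $\phi(t)=\E[|X_t-Y_t|^2]$ is finite a.e.\ and in $L^1(0,T)$, so that the It\^o isometry and the compensated Poisson isometry are valid for a.e.\ $t$ and Lemma~\ref{L_Gronwall} can be applied directly to $\phi$. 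Both routes are correct; yours is shorter at the cost of importing the a priori estimate, while the paper's localization keeps the uniqueness proof entirely self-contained (and would also survive in settings where one does not want to presume the growth bound needed for Theorem~\ref{T_apriori}). Two minor points worth being explicit about if you write this up: (i) the drift term should be handled by Cauchy--Schwarz in $s$, which produces the extra factor $T$ in $C_T$; (ii) the three pieces are not orthogonal in $L^2(\P)$ because the integrand $K(t-s)$ depends on the upper limit $t$, so the stochastic convolutions are not martingales in $t$ and one must use $(a+b+c)^2\le 3(a^2+b^2+c^2)$ rather than dropping cross terms.
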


\begin{proof}
The argument is similar to the proof of \eqref{eq:estimateLp}, so we only give a sketch.
Let $X$ and $Y$ be two solutions of \eqref{eq:jumpsveLip} with trajectories in $L^2_{\rm loc}$. Define $\tau_n =\inf\{t\colon \int_0^t (|X_s|^2 + |Y_s|^2) ds\geq n\}\wedge T$ as well as $X^n_t=X_t\bm1_{t<\tau_n}$ and  $Y^n_t= Y_t\bm1_{t<\tau_n}$. As in the proof of \eqref{eq:estimateLp}, but relying on the Lipschitz assumption rather than linear growth, one shows that
\[
\E[\|X^n-Y^n\|_{L^2(0,T)}^2]\leq c\int_0^T\int_0^t |K(t-s)|^2\E[|X^n_s-Y^n_s|^2])ds\,dt
\]
for all $T\ge0$ and some $c=c(T,c_{\rm Lip})<\infty$ that depends continuously on $T$ and $c_{\rm Lip}$. Multiple changes of variables and applications of Tonelli's theorem then show that $f_n(t)= \E[\|X^n-Y^n\|_{L^2(0,t)}^2]$ satisfies the convolution inequality $f_n(t) \le  - (\widehat K * f_n)(t)$ on $[0,T]$ with $\widehat K=- c(T,c_{\rm Lip})|K|^2$. The Gronwall lemma for convolution inequalities (see Lemma~\ref{L_Gronwall}) yields $f_n(T)\le0$, and monotone convergence gives $f_n(T)\to \E[ \|X-Y\|_{L^2(0,T)}^2]$. Thus $\E[ \|X-Y\|_{L^2(0,T)}^2 ]=0$, which implies pathwises uniqueness in the sense of Definition~\ref{def:pathwiseuniqueness}. Uniqueness in law now follows from Theorem~\ref{T_Kurtz_0}.
\end{proof}

\section{Path regularity}\label{s:pathreg}

Solutions $X$ of \eqref{eq_SVEC} can be very irregular. Consider for example the simple case
\[
X_t = \int_{[0,t)} K(t-s)dN_s= \sum_{t >T_n} K(t-T_n),
\]
where $N$ is a standard Poisson process with jump times $T_n$, $n\in\N$. Without further information about $K$, nothing can be said about the path regularity of $X$ beyond measurability. Even with singular but otherwise ``nice'' kernels such as those in Example~\ref{ex:kernels}\ref{ex:kernels_1}, $X$ fails to have c\`adl\`ag or even l\`adl\`ag trajectories. This is why $L^p$ spaces are useful for the solution theory. Nonetheless, one frequently does have additional information that implies better path regularity.

The following result yields H\"older continuity in many cases, also when the driving semimartingale has jumps. The result relies on a combination of the estimates  \eqref{eq:estimateLp}-\eqref{eq:estimateW} with Sobolev embedding theorems. For any $T>0$ and $\eta>0$, we denote by $C^{\eta}(0,T)$ the space of H\"older continuous functions of order $\eta$ on  $[0,T]$. Thus $f \in C^{\eta}(0,T)$ if 
\[
\| f\|_{C^{\eta}(0,T)}= \| f\|_{L^{\infty}(0,T)} + \sup_{\substack{t,s \,\in \, [0,T] \\ t \neq s} } \frac{|f(t)-f(s)|}{|t-s|^{\eta}} < \infty.
\]

\begin{theorem}\label{T_pathreg}
Let $d,k\in\N$, $p\in[2,\infty)$, and consider data $(g_0,K,b,a,\nu)$ as in \ref{SVE_IC}--\ref{SVE_abnu}. Assume there exist
a constant $\eta\in(0,1)$, a locally bounded function $c_K\colon\R_+\to\R_+$,
and a constant $c_{\rm LG}$ such that \eqref{eq_K_Wpnu} and \eqref{eq_LG} hold. Then for any weak $L^p$ solution $X$ of \eqref{eq_SVEC} the following statements hold:
	\begin{enumerate}
		\item \label{T:reg1}
		if $\eta p>1$, then $X-g_0$ admits a version whose sample paths lie in $C^{(\eta p -1)/p}(0,T)$ almost surely.
		\item \label{T:reg2}
				if $p=2$ and $\nu\equiv 0$, then $X-g_0$ admits a version   whose sample paths lie in $C^{\beta}(0,T)$ for all $\beta < \eta$ almost surely.
\item \label{T:reg3} if $K(0)<\infty$ and if $K-K(0)$ (instead of $K$) satisfies \eqref{eq_K_Wpnu} with $\eta p>1$, then $X-g_0$ admits a version with  c\`agl\`ad sample paths.
		\item \label{T:reg4} without assuming \eqref{eq_K_Wpnu} and \eqref{eq_LG}, but rather that $K$ is differentiable with derivative $K' \in L^2_{\rm loc}$, we have that $X-g_0$ is a semimartingale and thus admits a version with c\`agl\`ad sample paths.\footnote{Note that \eqref{eq_K_Wpnu} is implied by the given assumption on $K$, for any $\eta<1/p$.}
		\end{enumerate}
		\end{theorem}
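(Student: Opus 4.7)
I address the four parts separately, combining in each case the a priori estimate of Theorem~\ref{T_apriori} with a standard path-regularity tool. For~(i), I invoke the Sobolev--Slobodeckij embedding $W^{\eta,p}(0,T)\hookrightarrow C^{(\eta p-1)/p}(0,T)$, valid when $\eta p>1$: estimate \eqref{eq:estimateW} says that almost every trajectory of $X-g_0$ lies in $W^{\eta,p}(0,T)$, so choosing the H\"older representative within each equivalence class furnishes the desired version.

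For~(ii), the hypothesis $\nu\equiv 0$ makes the driver $Z$ continuous, so the Burkholder--Davis--Gundy inequality delivers moments of arbitrary order. The plan has two steps. First, I redo the proof of \eqref{eq:estimateLp} with BDG at exponent $2q$, using the weighted Jensen inequality $(\int fg\,du)^q\le(\int f\,du)^{q-1}\int f g^q\,du$ (for $f,g\ge 0$) to keep the required integrability of $K$ at $L^2$ rather than $L^{2q}$---a crucial point for singular kernels. Combined with the convolution Gronwall lemma~\ref{L_Gronwall}, this gives $\sup_{t\le T}\E[|X_t|^{2q}]<\infty$ for every $q\ge 1$. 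Second, I estimate the increment $\bar X_t-\bar X_s=\int_0^s(K(t-u)-K(s-u))dZ_u+\int_s^t K(t-u)dZ_u$, with $\bar X=X-g_0$, using BDG at exponent $2q$ together with the two kernel bounds
\[
\int_s^t|K(t-u)|^2\,du \le c_K(T)(t-s)^{2\eta}, \qquad \int_0^s|K(t-u)-K(s-u)|^2\,du \le C(t-s)^{2\eta},
\]
the first being immediate from \eqref{eq_K_Wpnu} by inserting $v^{2\eta}\le(t-s)^{2\eta}$ inside the integral, the second a classical translation inequality in $H^\eta$. This yields $\E[|\bar X_t-\bar X_s|^{2q}]\le C_q(t-s)^{2\eta q}$, and Kolmogorov's continuity theorem then produces a version in $C^{\eta-1/(2q)}(0,T)$; sending $q\to\infty$ gives $C^\beta(0,T)$ for every $\beta<\eta$.

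For~(iii), I split $K=K(0)+(K-K(0))$ and write $X_t-g_0(t)=K(0)Z_{t-}+\int_{[0,t)}(K(t-s)-K(0))dZ_s$. Part~(i) applied to $K-K(0)$ gives a H\"older (in particular continuous) version of the second summand, while $K(0)Z_{t-}$ is trivially c\`agl\`ad, so their sum provides the c\`agl\`ad version. For~(iv), I write $K(t-s)=K(0)+\int_s^t K'(u-s)du$ and apply the stochastic Fubini theorem \citep[Theorem~65]{P05} to obtain
\[
\int_{[0,t)}K(t-s)dZ_s = K(0)Z_{t-} + \int_0^t Y_u\,du, \qquad Y_u=\int_{[0,u)}K'(u-s)dZ_s,
\]
where $Y_u$ is well-defined for almost every $u$ by Lemma~\ref{L_SI_welldef} (since $K'\in L^2_{\rm loc}$ and \eqref{eq_GB_1} holds). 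The first summand is a c\`agl\`ad process indistinguishable $\P\otimes dt$-a.e.\ from the c\`adl\`ag semimartingale $K(0)Z_t$, while the second is absolutely continuous of finite variation, so their sum provides both the semimartingale property and the c\`agl\`ad version of $X-g_0$.

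\textbf{Main obstacle.} The delicate part is~(ii): the moment bootstrap must avoid requiring $K\in L^{2q}_{\rm loc}$, which would exclude the singular kernels of primary interest, and this is what forces the use of the Jensen trick above; the pointwise-in-$h$ translation inequality for $K$ must likewise be extracted from the Slobodeckij seminorm in \eqref{eq_K_Wpnu}, a classical but nontrivial $H^\eta$ result. The remaining parts reduce to Sobolev embedding, an elementary kernel decomposition, or stochastic Fubini combined with a semimartingale decomposition, and are essentially routine.
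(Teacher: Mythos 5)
Your treatments of parts (i), (iii), and (iv) coincide with the paper's own proof: Sobolev embedding applied to \eqref{eq:estimateW}, the splitting $K=K(0)+(K-K(0))$ with (i) applied to the remainder, and the stochastic Fubini identity $\int_{[0,t)}K(t-s)\,dZ_s=K(0)Z_{t-}+\int_0^t\bigl(\int_{[0,s)}K'(s-u)\,dZ_u\bigr)ds$, respectively (the paper cites Lemma~\ref{L:Fubini}, which itself rests on \citet[Theorem~65]{P05}). So those three parts are essentially the paper's argument.

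For part (ii) you take a genuine, if minor, variant. The paper asserts that one can ``adapt the proof of Theorem~\ref{T_apriori}'' to get $\E\bigl[\|X-g_0\|_{W^{\eta,q}(0,T)}^{q}\bigr]<\infty$ for all $q\ge 2$ and then applies the same Sobolev embedding as in (i), sending $q\to\infty$. You instead derive the pointwise increment bound $\E[|\bar X_t-\bar X_s|^{2q}]\lesssim (t-s)^{2\eta q}$ and invoke Kolmogorov's criterion. Both routes hinge on the same weighted-Jensen trick $\bigl(\int fg\,du\bigr)^{q}\le\bigl(\int f\,du\bigr)^{q-1}\int fg^{q}\,du$ to keep the required integrability of $K$ at $L^2$, and both need the translation bound $\|\Delta_h K\|_{L^2(0,T-h)}\lesssim h^\eta$ extracted from the double integral in \eqref{eq_K_Wpnu} (the embedding $B^\eta_{2,2}\hookrightarrow B^\eta_{2,\infty}$), so what each buys is mainly cosmetic: Kolmogorov gives the H\"older modification directly, while the paper reuses the machinery already developed for (i). One caveat worth flagging, which is equally implicit in the paper's one-line argument: your moment bootstrap asserts $\sup_{t\le T}\E[|X_t|^{2q}]<\infty$, but closing the Gronwall iteration requires controlling $\E[|X_s|^{2q}]$ and hence $g_0\in L^{2q}_{\rm loc}$ for every $q$ (and even then the resolvent Gronwall yields an $L^1$-in-$t$ rather than sup-in-$t$ bound unless $g_0\in L^\infty_{\rm loc}$); under the stated hypothesis $g_0\in L^2_{\rm loc}$ this does not follow, and the conclusion of (ii) should really be read with an additional integrability assumption on $g_0$, or with $a$ bounded. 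This is a gap shared with the paper, not something you introduced.
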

		
\begin{proof} Assertion \ref{T:reg1} follows from \eqref{eq:estimateW} and the Sobolev embedding theorem, see \citet[Theorem 8.2]{di2012hitchhiker}. To prove \ref{T:reg2}, one can adapt the proof of Theorem~\ref{T_apriori} to get that \eqref{eq:estimateLp}-\eqref{eq:estimateW} hold for all {$p \geq 2$}. Applying \citet[Theorem 8.2]{di2012hitchhiker} for sufficiently large values of $p$ yields the claimed statement. For \ref{T:reg3}, we write 
$$ X_t-g_0(t)=K(0)Z_{t-} + \int_{[0,t)}(K(t-s)-K(0))dZ_s.$$
	The claimed regularity follows on observing that the first term on the right-hand side is c\`agl\`ad and that, similarly to \ref{T:reg1}, the second term admits a version with continuous sample paths.  
For \ref{T:reg4} one applies a Fubini theorem, see Lemma \ref{L:Fubini}, to get that
	\[
	X_t-g_0(t)=K(0)Z_{t-} + \int_0^t \Big(\int_{[0,s)}K'(s-u)dZ_u\Big)ds.
	\]
This completes the proof.
\end{proof}

\section{Applications}\label{S:applications}
In this section, we illustrate our results with two applications: scaling limits of Hawkes processes and approximation of stochastic Volterra equations by Markovian semimartingales.

\subsection{Generalized nonlinear Hawkes processes and their scaling limits}\label{S_app_1}

Fix $d,k\in \N$ along with functions $g_0\colon\R_+ \to \R^d$, $b\colon\R^d \to \R^k$, $\Lambda\colon \R^d \to \R^k_+$, and a kernel $K\colon\R_+ \to \R^{d\times k}$. We fix $p\ge2$ and assume that $g_0$ and $K$ lie in $L^p_{\rm loc}$, that $K$ satisfies \eqref{eq_K_Wpnu} for some $\eta\in(0,1)$ and locally bounded function $c_K$, and that $b$ and $\Lambda$ are continuous and satisfy the linear growth condition
\begin{align}\label{eq:lambdagrowth}
|b(y)|+ |\Lambda(y)|\leq c(1+|y|), \quad y \in \R^d,
\end{align}
for some constant $c\in\R_+$.
Consider a $k$-dimensional counting process $N$ with no simultaneous jumps, whose intensity vector is given by $\Lambda(Y)$ with $Y$ a $d$-dimensional predictable process with trajectories in $L^p_{\rm loc}$ that satisfies
\begin{align}
Y_t &= g_0(t) + \int_0^t K(t-s)b(Y_s)ds  + \int_{[0,t)}  K(t-s) dN_s \quad \text{$\P\otimes dt$-a.e.} \label{eq:Hawkesnonlinear1} 
\end{align}
We call such a process $N$ a \emph{generalized nonlinear Hawkes process}. The existence of $Y$ and $N$ follows immediately from Theorem~\ref{T_weak_existence}. Indeed, \eqref{eq:Hawkesnonlinear1} is a stochastic Volterra equation of the form \eqref{eq_SVEC} whose driving semimartingale $Z$ has differential characteristics $b(Y)$, $a(Y)=0$, and $\nu(Y,d\zeta)=\sum_{i=1}^d \Lambda_i(Y) \delta_{e_i}(d\zeta)$, where $e_1,\ldots,e_d$ are the canonical basis vectors in $\R^d$.

\begin{example}
For $k=d$ and $b=0$, we obtain nonlinear multivariate Hawkes processes in the spirit of \citet{bremaud1996stability,daley2003introduction,delattre2016hawkes} and the references there.
\end{example}

We now establish convergence of rescaled generalized nonlinear Hawkes processes toward stochastic Volterra equations with no jump part, as those studied by \citet{AJLP17}. In the following theorem we consider given inputs $g_0,K$ as well as $g_0^n,K^n$ indexed by $n\in\N$, that satisfy the assumptions described in the beginning of this subsection. We consider a fixed function $\Lambda=(\Lambda_1,\ldots,\Lambda_d)$ as above and take $b=-\Lambda$. We continue to assume \eqref{eq:lambdagrowth} (with $b=-\Lambda$). For each $n$, denote the corresponding generalized nonlinear Hawkes process by $N^n$. Its intensity vector is $\Lambda(Y^n)$, where $Y^n$ satisfies
\begin{align*}
Y^n_t &= g_0^n(t)+ \int_{[0,t)} K^n(t-s)dM^n_s, \\
M^n_t &= N_t^n - \int_0^t \Lambda(Y^n_s) ds. 
\end{align*}

\begin{theorem}\label{T:Hawkes}
For each $n \in \N$, consider a diagonal matrix of rescaling parameters, $\varepsilon^n=\diag(\varepsilon^n_1,\ldots,\varepsilon^n_d) \in \R^{d\times d}$. Assume for all $i$ that 
\begin{align}\label{eq:scalinggrowth}
n(\varepsilon^n_i)^{2}  \Lambda_i\left((\varepsilon^n)^{-1}x\right) \leq c_i (1 + |x|^2),\quad x \in \R^d,
\end{align}
for some constant $c_i>0$ independent of $n$, and that
\begin{align}\label{eq:scaling}
	n (\varepsilon^n_{i})^2 \Lambda_i\left((\varepsilon^n)^{-1} x\right)   \to \bar \Lambda_i(x)
	\end{align}
	locally uniformly in $x$ for some function $\bar \Lambda\colon\R^d \to \R^d$. 
	Assume also that 
	\begin{enumerate}
	\item \label{eq:Hawkes0}  $\varepsilon^n g_0^n(n \fdot)\to g_0$ in $L^p_{\rm loc}$,
		\item \label{eq:Hawkesi}  $\varepsilon^nK^n(n\fdot)(\varepsilon^n)^{-1}\to K$ in $L^p_{\rm loc}$,
		\item \label{eq:Hawkesii} $\varepsilon^nK^n(n\fdot)(\varepsilon^n)^{-1}$ satisfy \eqref{eq_K_Wpnu} with the same $\eta$ and $c_K$ as $K$.
	\end{enumerate}
	Then the rescaled sequence $(X^n,Z^n)$ given by $X^n_t=\varepsilon^n Y^n_{nt}$, $Z^n_t=\varepsilon^n M^n_{nt}$ is tight in $L^p_{\rm loc} \times D$, and every limit point $(X,Z)$ is a weak $L^p$ solution of
\begin{equation}\label{eq:cir1}
X_t = g_0(t)  +  \int_0^t K(t-s){dZ_s},
\end{equation}
where $Z$ admits the representation $Z_t = \int_0^t \sqrt{  \diag(\bar \Lambda(X_s))}dW_s$ for some $d$-dimensional Brownian motion $W$.
\end{theorem}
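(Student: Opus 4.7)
The plan is to recast the rescaled pair $(X^n, Z^n)$ as a weak $L^p$ solution of a stochastic Volterra equation in the sense of Definition~\ref{D:solSVE}, and then invoke the tightness--stability machinery of Sections~\ref{S_apriori}--\ref{S_stability}. A change of variables $s \mapsto ns$ in the equation for $Y^n$ stated just before the theorem, together with the insertion of $(\varepsilon^n)^{-1}\varepsilon^n$ around $K^n$, gives
\[
X^n_t = \tilde g_0^n(t) + \int_{[0,t)} \tilde K^n(t-s)\, dZ^n_s, \qquad \tilde g_0^n(t) = \varepsilon^n g_0^n(nt), \quad \tilde K^n(t) = \varepsilon^n K^n(nt)(\varepsilon^n)^{-1}.
\]
The time change multiplies the intensity of $N^n$ by $n$, so $Z^n$ is a purely discontinuous martingale whose differential characteristics relative to the truncation $\chi(\zeta) = \zeta$ are $(0, 0, \nu^n)$, with
\[
\nu^n(x, d\zeta) = \sum_{i=1}^d n\Lambda_i((\varepsilon^n)^{-1}x)\, \delta_{\varepsilon^n_i e_i}(d\zeta).
\]
Thus $(X^n, Z^n)$ is a weak $L^p$ solution of \eqref{eq_SVEC} for the data $(\tilde g_0^n, \tilde K^n, 0, 0, \nu^n)$.

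\textbf{Uniform bounds and tightness.} A direct computation yields $\int|\zeta|^q \nu^n(x,d\zeta) = \sum_i n(\varepsilon^n_i)^q \Lambda_i((\varepsilon^n)^{-1}x)$. For $q = 2$ this is bounded by $c(1+|x|^2)$ uniformly in $n$ by \eqref{eq:scalinggrowth}, and for $q = p$ the extra factor $(\varepsilon^n_i)^{p-2}$ is bounded since \eqref{eq:scaling} forces $\varepsilon^n_i \to 0$. Hence \eqref{eq_LG} holds with an $n$-independent constant. Combined with hypotheses \ref{eq:Hawkes0}--\ref{eq:Hawkesii}, which make $\{\tilde g_0^n\}\cup\{g_0\}$ relatively compact in $L^p_{\rm loc}$ and enforce \eqref{eq_K_Wpnu} for $\{\tilde K^n\}$ with common constants, Corollary~\ref{C_tightness} yields tightness of $\{X^n\}$ in $L^p_{\rm loc}$. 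Tightness of $\{Z^n\}$ in $D$ then follows as in the proof of Theorem~\ref{T_weak_existence}: Markov's inequality together with the a priori bound from Theorem~\ref{T_apriori} controls the tails of the jump measures uniformly in $n$, and the compensator $\int_0^\cdot \int |\zeta|^2 \nu^n(X^n_s, d\zeta)\,ds$ is strongly majorized by a sequence that converges weakly to a continuous limit, so \citet[Theorem~VI.4.18]{jac_shi_03} applies. Along a subsequence, $(X^n, Z^n) \Rightarrow (X, Z)$ in $L^p_{\rm loc} \times D$.

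\textbf{Convergence of generators and identification of the limit.} For $f \in C^2_c(\R^k)$, a second-order Taylor expansion yields
\[
A^n f(x, z) = \tfrac{1}{2}\sum_{i=1}^d n(\varepsilon^n_i)^2 \Lambda_i((\varepsilon^n)^{-1}x)\, \partial^2_{ii} f(z) + R^n(x, z),
\]
where uniform continuity of $\nabla^2 f$, together with \eqref{eq:scalinggrowth} and $\varepsilon^n_i \to 0$, forces $R^n \to 0$ locally uniformly. Combined with \eqref{eq:scaling}, this gives $A^n f \to Af$ locally uniformly, where $Af(x,z) = \tfrac{1}{2}\tr(\diag(\bar\Lambda(x))\nabla^2 f(z))$, i.e.\ the generator of the continuous triplet $(0, \diag(\bar\Lambda), 0)$. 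Theorem~\ref{T_stability} then identifies $(X, Z)$ as a weak $L^p$ solution of \eqref{eq:cir1} in which $Z$ is a continuous local martingale with $\langle Z\rangle_t = \int_0^t \diag(\bar\Lambda(X_s))\,ds$. A standard Brownian representation theorem (possibly on an enlarged probability space) finally yields a Brownian motion $W$ with $Z_t = \int_0^t \sqrt{\diag(\bar\Lambda(X_s))}\, dW_s$.

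\textbf{Main obstacle.} The principal technical difficulty is the tightness of $\{Z^n\}$ in $D$: the jumps of $Z^n$ shrink to zero while the jump intensities diverge as $n$, so no direct pathwise control is available. The argument has to proceed indirectly through a priori control of the quadratic characteristic via Theorem~\ref{T_apriori}, combined with the Jacod--Shiryaev criterion, essentially mirroring the tightness argument used in the existence proof of Theorem~\ref{T_weak_existence}.
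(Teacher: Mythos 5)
Your proposal is correct and follows essentially the same route as the paper: rewrite the rescaled pair as a weak $L^p$ solution with pure-jump characteristics, check \eqref{eq_LG} uniformly in $n$, apply Corollary~\ref{C_tightness} for tightness of $X^n$ and the Jacod--Shiryaev criterion for tightness of $Z^n$, then identify the limit via the generator convergence and stability theorem. One small point in your favor: you explicitly verify the $p$-th moment term in \eqref{eq_LG} using the extra factor $(\varepsilon^n_i)^{p-2}$, a detail the paper's proof leaves implicit (it only records the second-moment bound), though your justification that \eqref{eq:scaling} ``forces $\varepsilon^n_i\to 0$'' is not strictly a consequence of that hypothesis alone --- it suffices, and is the intended regime, that the $\varepsilon^n_i$ be bounded.
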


\begin{proof}
One verifies that the rescaled intensity $X^n$ satisfies the equation
\[
X^n_t = {\varepsilon^n} g^n_0(nt) + \int_0^t \varepsilon^n K^n(n(t-s))(\varepsilon^n)^{-1} dZ^n_s,
\]
where $Z^n$ has differential characteristics $b^n(X^n)=0, a^n(X^n)=0, \nu^n(X^n,d\zeta)$ with jump kernel given by $\nu^n(x,d\zeta) =  \sum_{i=1}^d n \Lambda_i\left((\varepsilon^n)^{-1}x\right) \delta_{ \varepsilon^n_i e_i}(d\zeta)$. Here $e_1,\ldots,e_d$ are the canonical basis vectors in $\R^d$. The  associated operator is given by
\[
A^n f(x,z) =  \sum_{i=1}^d n \Lambda_i\left((\varepsilon^n)^{-1}x\right) \left( f(z+ \varepsilon^n_i e_i) - f(z) - \varepsilon^n_i \nabla f(z)^\top e_i\right),
\]
which converges locally uniformly to $\frac 1 2 \tr\left(  \diag\left(\bar \Lambda(x)\right) \nabla^2  f(z)\right)$ due to \eqref{eq:scaling}. Consequently, provided $(X^n,Z^n)$ is tight, Theorem~\ref{T_stability_MGP} shows that every limit point $(X,Z)$ is a weak $L^p$ solution of \eqref{eq:cir1}, where $Z$ has differential characteristics $b(X)=0$, $a(X)=\diag\left(\bar \Lambda(X)\right)$, $\nu(X,d\zeta)=0$. The representation of $Z$ in terms of a Brownian motion is standard. It remains to prove tightness. First, by virtue of \eqref{eq:scalinggrowth}, we have $\int_{\R^d}|\zeta|^2\nu^n(x,d\zeta) \leq c (1 + |x|^2)$ for all $x\in\R^d$ and some constant $c$.
	Thus, \eqref{eq_LG} is satisfied uniformly in $n$. Recalling \ref{eq:Hawkes0} and \ref{eq:Hawkesii}, Corollary~\ref{C_tightness} yields tightness of $(X^n)_{n \geq 1}$. Tightness of $(Z^n)_{n \geq 1}$ in $D$ is then obtained by reiterating the arguments in the proof of Theorem~\ref{T_weak_existence} at the end of Section~\ref{S_pf_Tex1}.  Since marginal tightness implies joint tightness the proof is complete.
\end{proof}

\begin{example}
Let $K,g_0$ be as described in the beginning of this subsection and let $\varepsilon^n=\diag(\varepsilon^n_1,\ldots,\varepsilon^n_d) \in \R^{d\times d}$ as above. Then the functions $g_0^n$ and $K^n$ given by 
\[
g_0^n(t)=(\varepsilon^n)^{-1}g_0\left(\frac t n\right), \qquad K^n(t)=(\varepsilon^n)^{-1} K\left(\frac t n\right) \varepsilon^n
\]
satisfy \ref{eq:Hawkes0}--\ref{eq:Hawkesii}. There are other ways of constructing such kernels, as illustrated in \citet{JR:15,JR:16} for linear Hawkes processes. 
\end{example}

Theorem \ref{T:Hawkes} is in the same spirit as the results of \citet{erny2019mean}, who obtain square-root type processes as limits of mean field interactions of multi-dimensional nonlinear Hawkes processes. The following example provides a concrete specification for the special case of fractional powers, extending results in \citet{JR:15,JR:16}  to nonlinear Hawkes processes. 

\begin{example}
Let $\beta_i \in (0,2)$, $i=1,\ldots,d$, and take $\Lambda(y)=(y_1^{\beta_1},\ldots,y_d^{\beta_d})$. Let $\varepsilon^n= \diag(\varepsilon^n_1,\ldots,\varepsilon^n_d)$ satisfy $n (\varepsilon^n_i)^{2-\beta_i} \to \nu_i$ for some constants $\nu_i \geq 0$. Then \eqref{eq:scalinggrowth}--\eqref{eq:scaling} are satisfied with $\bar \Lambda=\Lambda$. The limiting process $(X,Z)$ produced by Theorem~\ref{T:Hawkes} takes the form
\[
X_t = g_0(t)  +  \int_0^t K(t-s)\sqrt{  \diag\left(\nu_1|X^1_s|^{\beta_1},\ldots,\nu_d|X^d_s|^{\beta_d}\right)}dW_s,
\]
where $W$ is a $d$-dimensional Brownian motion. 
\end{example}
 	
We end this subsection with some comments regarding the integrability conditions on the kernel. Our work aims to develop a theory of stochastic Volterra equations with continuous as well as discontinuous trajectories. Having this goal in mind, the $L^2$ integrability condition on the kernel is used to define stochastic integrals with respect to the continuous martingale part and the discontinuous martingale part with non-summable jumps of the driving semimartingale $Z$ in \eqref{eq_SVEC}. In some particular instances, however, it is possible to weaken the $L^2$ integrability condition on the kernel. For example, Lemma \ref{L_basic_existence} yields existence of solutions with bounded jump intensity assuming only measurability of the kernel. This can be applied to \eqref{eq:Hawkesnonlinear1} when $\Lambda$ is bounded. If $\Lambda$ is not bounded then $L^1$ integrability conditions are sufficient to prove the existence of Hawkes processes, see for instance \citet[Theorem 1]{bremaud1996stability}. When the driving semimartingale $Z$ has affine characteristics, kernels that are locally in $L^1$ can also be considered by studying an ``integrated version" of \eqref{eq_SVEC} in the spirit of \eqref{D_MP_1}. This approach is taken in \citet{aj19weak} to obtain existence, uniqueness and stability results in a framework including $L^1$ kernels as well as continuous and infinite activity jump processes. In this case the characteristics of $Z$ are no longer necessarily absolutely continuous with respect to the Lebesgue measure.

\subsection{Approximation by Markovian semimartingales}\label{S_app_2}
It is sometimes useful, for example for numerical purposes, to replace a singular kernel with a smooth approximation. Theorem~\ref{T_stability_MGP} can be used to analyze this procedure; see also the stability result of \citet[Theorem~3.6]{AJEE:19a} for the case without jumps. An approximation scheme that is useful in practice is to consider weighted sums of exponentials.

\begin{theorem}\label{T:factors}
Fix $d,k\in \N$, $p \geq 2$ and $(g_0,K,b,a,\nu)$ as in \ref{SVE_IC}--\ref{SVE_abnu}, and assume \eqref{eq_LG} holds. For each $n \in \N$,  let $g_0^n \in L^p_{\rm loc}$ and consider the kernel
\[
K^n(t) = \sum_{i=1}^n c^n_i e^{-\lambda^n_i t }
\]
for some $c^n_i \in \R^{d \times k}$ and $\lambda^n_i \geq 0$, $i=1,\ldots,n$. By Example~\ref{ex:kernels}\ref{ex:kernels:2} and Theorem \ref{T_weak_existence} there exists a weak $L^p$-solution $(X^n,Z^n)$ for the data $(g_0^n,K^n,b,a,\nu)$.  Moreover, $X^n$ admits the representation
\begin{align*}
X^n_t & = g^n_0(t) + \sum_{i=1}^n c^n_i Y^{n,i}_t\\
dY^{n,i}_t &= -\lambda^n_i Y^{n,i}_tdt + dZ^n_t, \quad Y^{n,i}_0=0, \quad i=1,\ldots,n.
\end{align*}
Assume in addition that
\begin{enumerate}
\item \label{eq:Hawkess0}  $g_0^n\to g_0$ in $L^p_{\rm loc}$,
\item \label{eq:Hawkessi}  $K^n\to K$ in $L^p_{\rm loc}$,
\item \label{eq:Hawkessii}  $K^n$ satisfy \eqref{eq_K_Wpnu} with the same $\eta$ and $c_K$ as $K$.%
\end{enumerate}
Then $(X^n,Z^n)_{n \geq 1}$ is tight in $L^p_{\rm loc}\times D$, and every limit point $(X,Z)$ is a weak $L^p$ solution of \eqref{eq_SVEC} for the data $(g_0,K,b,a,\nu)$.
\end{theorem}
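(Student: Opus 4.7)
The plan is to first establish the finite-dimensional Markovian representation of each $X^n$ by an elementary convolution/It\^o computation, then deduce the tightness of $(X^n,Z^n)$ from the machinery already developed in Corollary~\ref{C_tightness} and the proof of Theorem~\ref{T_weak_existence}, and finally invoke the stability theorem (Theorem~\ref{T_stability}) to identify every limit point as a weak $L^p$ solution for $(g_0,K,b,a,\nu)$.

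For the representation, I would define $Y^{n,i}_t = \int_{[0,t)} e^{-\lambda^n_i(t-s)} dZ^n_s$ for $i=1,\ldots,n$. A direct application of It\^o's formula to $t\mapsto e^{-\lambda^n_i t}\int_{[0,t)} e^{\lambda^n_i s} dZ^n_s$ yields $dY^{n,i}_t=-\lambda^n_i Y^{n,i}_tdt+dZ^n_t$ with $Y^{n,i}_0=0$. Summing against $c^n_i$ and using $K^n(t)=\sum_i c^n_i e^{-\lambda^n_i t}$ gives
\[
\sum_{i=1}^n c^n_i Y^{n,i}_t=\int_{[0,t)}K^n(t-s)dZ^n_s=X^n_t-g_0^n(t),\qquad\text{$\P\otimes dt$-a.e.,}
\]
by Definition~\ref{D:solSVE}. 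This is the asserted finite-dimensional Markovian lift.

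For tightness, assumptions~\ref{eq:Hawkess0}--\ref{eq:Hawkessii} provide precisely the hypotheses of Corollary~\ref{C_tightness}: the triplet $(b,a,\nu)$ is fixed and satisfies \eqref{eq_LG} with a common constant $c_{\rm LG}$; the $K^n$ all satisfy \eqref{eq_K_Wpnu} with common $\eta$ and $c_K$; and $\{g_0^n\}$, being convergent in $L^p_{\rm loc}$, is relatively compact. Hence $\{X^n\}$ is tight in $L^p_{\rm loc}$. Tightness of $\{Z^n\}$ in $D$ follows by exactly the argument used at the end of Section~\ref{S_pf_Tex1}: the a priori estimate \eqref{eq:estimateLp} yields a uniform-in-$n$ $L^2$ bound on $X^n$, which via \eqref{eq_LG} controls $\E[\int_0^T\int_{\R^k}\bm1_{|\zeta|>m}\nu(X^n_t,d\zeta)dt]$ and also shows that the drift-plus-quadratic-variation process of $Z^n$ is strongly majorized by a tight sequence with continuous limit points; one then applies \citet[Theorem~VI.4.18 and Remark~VI.4.20(2)]{jac_shi_03}.

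Joint tightness follows from marginal tightness, so along a subsequence $(X^n,Z^n)\Rightarrow(X,Z)$ in $L^p_{\rm loc}\times D$. Since the characteristic triplet $(b,a,\nu)$ does not depend on $n$, the associated operators trivially satisfy $A^nf=Af$, so the third bullet of Theorem~\ref{T_stability} is automatic. Combined with $g_0^n\to g_0$ and $K^n\to K$ in $L^p_{\rm loc}$, Theorem~\ref{T_stability} identifies $(X,Z)$ as a weak $L^p$ solution of \eqref{eq_SVEC} for $(g_0,K,b,a,\nu)$. There is no substantive obstacle here: every ingredient is already in place, and the only bookkeeping is the reuse of the tightness argument from the proof of Theorem~\ref{T_weak_existence} together with the elementary It\^o calculation establishing the Markovian lift.
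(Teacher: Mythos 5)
Your proposal is correct and follows essentially the same route as the paper's proof: the Markovian lift via the It\^o integration-by-parts computation for $Y^{n,i}$, tightness of $X^n$ from Corollary~\ref{C_tightness}, tightness of $Z^n$ by re-running the argument at the end of Section~\ref{S_pf_Tex1}, and identification of limit points by stability (the paper invokes Theorem~\ref{T_stability_MGP}, you invoke the equivalent Theorem~\ref{T_stability}).
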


\begin{proof}
Defining $Y^{n,i}_t=\int_0^t e^{-\lambda^n_i (t-s)}dZ^n_s$, the representation of $X^n$ follows from It\^o's formula. Corollary~\ref{C_tightness} yields tightness of $(X^n)_{n \geq 1}$. Tightness of $(Z^n)_{n \geq 1}$ in $D$ is then obtained by reiterating the arguments in the proof of Theorem~\ref{T_weak_existence} at the end of Section~\ref{S_pf_Tex1}. The claimed convergence follows from Theorem~\ref{T_stability_MGP}.
\end{proof}

\begin{remark}
If $K$ is the Laplace transform of a $\R^{d\times d}$-valued measure~$\mu$,
\[
K(t)=\int_{\R_+} e^{-\lambda t}\mu(d\lambda), \quad t>0,
\]
then $K$ can indeed be approximated by weighted sums of exponentials. Constructions of such weighted sums are given by \citet{AJEE:19a}.
\end{remark}

\appendix

\section{Auxiliary results}

We occasionally use the following version of Young's inequality on subintervals. It uses the convolution notation $(f*g)(t)=\int_0^t f(t-s)g(s)ds$.

\begin{lemma}\label{L_Young}
Fix $T\in\R_+$ and $p,q,r\in[1,\infty]$ with $p^{-1}+q^{-1}=r^{-1}+1$. For any matrix-valued measurable functions $f,g$ on $[0,T]$ of compatible size, one has the Young type inequality $\|f*g\|_{L^r(0,T)}\le\|f\|_{L^p(0,T)}\|g\|_{L^q(0,T)}$.
\end{lemma}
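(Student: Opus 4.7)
The plan is to reduce the statement to the classical Young convolution inequality on the full real line. First I would reduce from matrix-valued to scalar nonnegative integrands: for any compatible matrices $A,B$ and the chosen matrix norm, the submultiplicativity $|AB|\le|A||B|$ gives the pointwise bound
\[
|(f*g)(t)| = \left|\int_0^t f(t-s)g(s)\,ds\right| \le \int_0^t |f(t-s)|\,|g(s)|\,ds = (|f|*|g|)(t),
\]
so it suffices to prove the inequality for the scalar, nonnegative functions $|f|,|g|\colon[0,T]\to\R_+$.

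Next, I would extend $f$ and $g$ by zero outside $[0,T]$ to obtain $\tilde f,\tilde g\colon\R\to\R_+$, and observe that for every $t\in[0,T]$,
\[
\int_\R \tilde f(t-s)\tilde g(s)\,ds = \int_0^t f(t-s)g(s)\,ds = (f*g)(t),
\]
because $\tilde g$ vanishes outside $[0,T]$ and $\tilde f(t-s)$ vanishes unless $0\le t-s\le T$, which given $t\in[0,T]$ restricts $s$ to $[0,t]$. Consequently,
\[
\|f*g\|_{L^r(0,T)} \le \|\tilde f * \tilde g\|_{L^r(\R)}.
\]

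Finally, I would invoke the classical Young inequality on $\R$, i.e.\ $\|\tilde f *\tilde g\|_{L^r(\R)} \le \|\tilde f\|_{L^p(\R)}\|\tilde g\|_{L^q(\R)}$ under the hypothesis $p^{-1}+q^{-1}=r^{-1}+1$, valid for $p,q,r\in[1,\infty]$ (including the endpoint cases, which correspond to H\"older's inequality when $r=\infty$ and to Minkowski/triangle when $p$ or $q$ equals $1$). Since extension by zero preserves the $L^p$ and $L^q$ norms, $\|\tilde f\|_{L^p(\R)}=\|f\|_{L^p(0,T)}$ and $\|\tilde g\|_{L^q(\R)}=\|g\|_{L^q(0,T)}$, which yields the claim. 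The only non-routine point is the endpoint case $r=\infty$, which I would either justify by appealing to the standard endpoint statement of Young's inequality or, if desired, prove directly via H\"older's inequality applied to $\int_0^t f(t-s)g(s)\,ds$; this is expected to be the only place requiring any care, and it is entirely standard.
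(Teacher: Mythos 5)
Your proposal is correct and follows essentially the same route as the paper: the paper's proof also applies the classical Young inequality on the whole real line to the zero-extended functions $|f|\bm1_{[0,T]}$ and $|g|\bm1_{[0,T]}$, with your pointwise reduction $|(f*g)(t)|\le(|f|*|g|)(t)$ via submultiplicativity of the matrix norm being implicit there. No gaps; the endpoint cases are, as you say, standard.
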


\begin{proof}
This follows from the Young inequality for convolutions on the whole real line applied to the functions $|f|\bm1_{[0,T]}$ and $|g|\bm1_{[0,T]}$ that equal $|f(t)|$ and $|g(t)|$ for $t\in[0,T]$ and zero elsewhere.
\end{proof}

For ease of reference, we give the following well-known Gronwall type lemma for convolution inequalities; see \citet[Lemma~9.8.2]{GLS:90} for the case of non-convolution kernels.

\begin{lemma}\label{L_Gronwall}
Let $T\in\R_+$ and suppose $f,g,k\in L^1(0,T)$. Assume $k$ has a nonpositive resolvent $r\le 0$. If $f\le g-k*f$, then $f\le g-r*g$.
\end{lemma}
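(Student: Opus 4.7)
The plan is to convert the inequality into an equality by introducing the slack function, then apply the standard resolvent identity, and finally exploit the sign constraints on $r$ and on the slack.

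More precisely, I would first set $h = g - f - k*f$, so that $h \ge 0$ on $[0,T]$ by hypothesis and $h \in L^1(0,T)$. The inequality $f \le g - k*f$ then becomes the equation
\[
f + k * f = g - h.
\]
Since $k, g-h \in L^1(0,T)$ and $r$ is the resolvent of $k$ (so it satisfies $r + k*r = k$, or equivalently $k*r = k - r$), the standard resolvent identity says that the unique $L^1$ solution of $\phi + k*\phi = \psi$ is $\phi = \psi - r*\psi$. One verifies this by computing
\[
(\psi - r*\psi) + k*(\psi - r*\psi) = \psi - r*\psi + k*\psi - (k*r)*\psi = \psi - r*\psi + k*\psi - (k - r)*\psi = \psi,
\]
with uniqueness following from the fact that $\phi + k*\phi = 0$ forces $\phi = -r*\phi$, and iterating using the growth of the iterated kernels $k^{*n}$ (or by the Volterra-type contraction argument in \citet[Theorem~2.3.1]{GLS:90}).

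Applying this identity to $\psi = g - h$, I obtain
\[
f = (g - h) - r * (g - h) = (g - r*g) - (h - r*h).
\]
Now $h \ge 0$ and $r \le 0$ imply $r * h \le 0$, so $h - r*h \ge 0$, and therefore $f \le g - r*g$ a.e.\ on $[0,T]$, as claimed.

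The only non-routine ingredient is the resolvent identity together with the uniqueness of the $L^1$ solution of the Volterra equation $\phi + k*\phi = \psi$; both are classical and recorded in \citet[Chapter~2]{GLS:90}, so the proof is essentially a two-line manipulation once the slack $h$ is introduced. No obstacle is anticipated beyond citing the existence of the resolvent with the assumed sign.
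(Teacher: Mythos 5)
Your proof is correct and takes essentially the same approach as the paper: introduce the nonnegative slack $h = g - f - k*f$, rewrite the inequality as the equality $f + k*f = g - h$, invert using the resolvent to get $f = (g-h) - r*(g-h)$, and then use $h \ge 0$, $r \le 0$ to drop the $h$-terms. You spell out the resolvent identity and uniqueness in more detail than the paper (which simply cites the definition of resolvent), but the argument is the same.
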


\begin{proof}
Write $f+k*f=g-h$ for $h\ge0$. By the definition of resolvent, one then has $f=(g-h)-r*(g-h)\le g-r*g$.
\end{proof}

\begin{lemma}\label{L_SI_welldef}
{Let $p \in [2, \infty)$.} Consider a convolution kernel $K\in L^p_{\rm loc}$ and a characteristic triplet $(b,a,\nu)$ satisfying \eqref{eq_GB_1}. Let $X$ be a predictable process with trajectories in $L^p_{\rm loc}$, and let $Z$ be an It\^o semimartingale whose differential characteristics (with respect to some given truncation function $\chi$) are $b(X),a(X),\nu(X,d\zeta)$. Then for almost every $t\in\R_+$, the stochastic integral $\int_{[0,t)} K(t-s)dZ_s$ is well-defined.
\end{lemma}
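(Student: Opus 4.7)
The plan is to fix any horizon $T>0$ and show that the predictable integrand $H^t_s = K(t-s)\bm1_{[0,t)}(s)$ is integrable with respect to $Z$ (in the sense of the semimartingale integration theory of \citet{jac_shi_03}) for $\P\otimes dt$-almost every $(t,\omega)\in\Omega\times[0,T]$. Writing the canonical decomposition
\[
Z = \int_0^\fdot b(X_s)\,ds + M^c + \chi*(\mu^Z - \nu^Z) + (\id-\chi)*\mu^Z,
\]
where $\langle M^c \rangle = \int_0^\fdot a(X_s)ds$ and $\nu^Z(ds,d\zeta)=\nu(X_s,d\zeta)ds$ is the compensator of the jump measure $\mu^Z$, the integrability of $H^t$ reduces to four separate conditions, which I would verify one at a time.

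The main tool is a Fubini argument combined with the growth bound \eqref{eq_GB_1}. Since $p\ge 2$ one has $K\in L^2_{\rm loc}\cap L^1_{\rm loc}$, and for $q\in\{1,2\}$ Tonelli yields
\[
\int_0^T \int_0^t |K(t-s)|^q(1+|X_s|^p)\,ds\,dt \le \|K\|_{L^q(0,T)}^q\int_0^T(1+|X_s|^p)\,ds,
\]
which is almost surely finite because $X$ has trajectories in $L^p_{\rm loc}$. Hence $\int_0^t|K(t-s)|^q(1+|X_s|^p)ds<\infty$ for $\P\otimes dt$-a.e. $(t,\omega)$. Together with the bounds $|b(x)|+|a(x)|\le c(1+|x|^p)$ from \eqref{eq_GB_1}, the cases $q=1$ and $q=2$ handle the drift and continuous martingale conditions, respectively. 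The small-jump condition $\int_0^t\int|K(t-s)\chi(\zeta)|^2\nu(X_s,d\zeta)ds<\infty$ follows from the same $q=2$ calculation after invoking the elementary estimate $|\chi(\zeta)|^2\le c_\chi(1\wedge|\zeta|^2)$, valid for any truncation function $\chi$, in conjunction with the bound on $\int(1\wedge|\zeta|^2)\nu(x,d\zeta)$ in \eqref{eq_GB_1}.

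It then remains to handle the large-jump sum $\sum_{s\le t}|K(t-s)(\Delta Z_s-\chi(\Delta Z_s))|$. Since $\zeta-\chi(\zeta)$ vanishes near the origin, only finitely many terms are nonzero almost surely: for any $R$ large enough that $\chi(\zeta)=\zeta$ on $|\zeta|\le R$, the count of jumps with $|\Delta Z_s|>R$ on $[0,t]$ has compensator $\int_0^t\int \bm1_{|\zeta|>R}\nu(X_s,d\zeta)ds$, dominated by $c\int_0^t(1+|X_s|^p)ds<\infty$ via \eqref{eq_GB_1}. The only mildly subtle step I foresee is that $K$ is merely defined modulo a Lebesgue-null set, so the pointwise evaluation $K(t-s_i)$ at the random jump times $s_i(\omega)$ must be justified. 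This is handled by a translation/Fubini argument: for each fixed $\omega$, the set $\bigcup_i (s_i(\omega)+N_K)$ of translates of the null set $N_K$ on which $K$ is undefined is itself Lebesgue null, so for $\P\otimes dt$-almost every $(t,\omega)$ all values $K(t-s_i(\omega))$ are finite and the finite sum makes sense, which completes the verification.
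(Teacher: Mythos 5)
Your argument is correct. It rests on the same basic mechanism as the paper's proof---Tonelli combined with the growth bound \eqref{eq_GB_1}---but the execution differs in a few respects. The paper introduces the stopping times $\tau_n=\inf\{t\colon\int_0^t|X_s|^pds>n\}$, applies Young's inequality to the convolution of $|K|^2$ with $\kappa(X)$, where $\kappa(x)=|b(x)|+|a(x)|+\int_{\R^k}(1\wedge|\zeta|^2)\nu(x,d\zeta)$, then takes expectations to produce a deterministic bound $c_n$ and a $dt$-nullset $N_n$, and finally lets $n\to\infty$. You instead apply plain Tonelli pathwise, with no stopping times and no expectations, to obtain $\int_0^T\int_0^t|K(t-s)|^q(1+|X_s|^p)\,ds\,dt<\infty$ a.s.\ for $q\in\{1,2\}$, which is a lighter-weight argument that is sufficient for this lemma. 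You also spell out the semimartingale decomposition of $Z$ and verify integrability of the drift, Brownian, compensated small-jump, and large-jump pieces separately---the paper folds the first three into the single quantity $\kappa$ and does not explicitly discuss the large-jump part. Finally, you address the (mild but genuine) measure-theoretic point of evaluating the $L^p_{\rm loc}$ representative of $K$ at the random large-jump times, which the paper leaves implicit. Both proofs are sound; yours is arguably the more economical, while the paper's mirrors the localization and expectation machinery reused in the proof of Theorem~\ref{T_apriori}.
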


\begin{proof}
Define $\kappa(x)=|b(x)| + |a(x)| + \int_{\R^k} (1\wedge|\zeta|^2) \nu(x,d\zeta)$ and set $\tau_n=\inf\{t\colon\int_0^t |X_s|^pds>n\}$. Due to the bound \eqref{eq_GB_1} and the definition of $\tau_n$, we have $\int_0^{T\wedge\tau_n}\kappa(X_s) ds\le c(T+n)$. Thus, for any $T\in\R_+$, Young's inequality, see Lemma~\ref{L_Young}, gives
\begin{align*}
\int_0^T &\left( \int_0^{t\wedge\tau_n} |K(t-s)|^2 \kappa(X_s) ds\right)^{p/2} dt\\
&\le  \left(\int_0^T |K(t)|^p dt\right) \left(\int_0^{T\wedge\tau_n} \kappa(X_s) dt\right)^{p/2} \\
&\le \left(\int_0^T |K(t)|^p dt\right) \left( c(T+n)\right)^{p/2}.
\end{align*}
The right-hand side is deterministic; call it $c_n$. Taking expectations and using Tonelli's theorem yields
\[
\int_0^T \E\left[ \left( \int_0^{t\wedge\tau_n} |K(t-s)|^2 \kappa(X_s) ds\right)^{p/2}\right] dt \le c_n.
\]
Therefore, for each $n$, there is a nullset $N_n\subset[0,T]$ such that the expectation is finite for all $t\in[0,T]\setminus N_n$. The union $N=\bigcup_n N_n$ is still a nullset, and for each $t\in[0,T]\setminus N$,
\[
\int_0^{t\wedge\tau_n} |K(t-s)|^2 \kappa(X_s) ds < \infty \text{ for all $n$, $\mathbb{P}$-a.s.}
\]
Since $X$ has trajectories in $L^p_{\rm loc}$, we have $\tau_n\to\infty$. We infer that, for each $t\in[0,T]\setminus N$, $\int_0^t |K(t-s)|^2 \kappa(X_s) ds < \infty$, $\mathbb{P}$-a.s. This implies that the random variable $\int_{[0,t)} K(t-s)dZ_s$ is well-defined.
\end{proof}


\bibliographystyle{plainnat}
\bibliography{bibl}
\end{document}